\newcommand{\CC }{\mathbb{C}}
\newcommand{\RR }{\mathbb{R}}
\newcommand{\ZZ }{\mathbb{Z}}
\newcommand{\KK }{\mathbb{K}}
\newcommand{\Ac }{\mathscr{A}}
\newcommand{\Eg }{\mathcal{E}}
\newcommand{\Bc }{\mathscr{B}}
\newcommand{\Dc }{\mathscr{D}}
\newcommand{\Gg }{\mathcal{G}}
\newcommand{\Vg }{\mathcal{V}}
\newcommand{\Cc }{\mathscr{C}}
\newcommand{\Ic}{\mathcal{I}}
\DeclareMathOperator{\Shi}{Shi}
\DeclareMathOperator{\Cat}{Cat}
\DeclareMathOperator{\Der}{Der}
\DeclareMathOperator{\GL}{GL}
\newcommand\ddxi[1]{\partial/\partial x_{#1}}
\DeclareMathOperator{\rk}{rk}
\DeclareMathOperator{\h}{ht}
\numberwithin{equation}{section}
\theoremstyle{plain}
\newtheorem{lemma}[equation]{Lemma}
\newtheorem{theorem}[equation]{Theorem}
\newtheorem{corollary}[equation]{Corollary}
\newtheorem{proposition}[equation]{Proposition}
\theoremstyle{definition}
\newtheorem{definition}[equation]{Definition}
\newtheorem{remark}[equation]{Remark}
\newtheorem{remarks}[equation]{Remarks}
\newtheorem{example}[equation]{Example}
\title[Accurate Arrangements]
{Accurate Arrangements}
\author[P.~M\"ucksch]{Paul M\"ucksch}
\address{Paul M\"ucksch,
Fakult\"at f\"ur Mathematik, Ruhr-Universit\"at Bochum,
D-44780 Bochum, Germany}
\email{paul.muecksch@rub.de}
\author[G. R\"ohrle]{Gerhard R\"ohrle}
\address{Gerhard R\"ohrle,
Fakult\"at f\"ur Mathematik,
Ruhr-Universit\"at Bochum,
D-44780 Bochum, Germany}
\email{gerhard.roehrle@rub.de}
\begin{document}

\begin{abstract}
    Let $\Ac$ be a Coxeter arrangement of rank $\ell$.
    In 1987 Orlik, Solomon and Terao conjectured that
    for every $1\leq d \leq \ell$, the first $d$ exponents of $\Ac$ -- 
    when listed in increasing order -- 
    are realized as the exponents of a free restriction
    of $\Ac$ to some intersection of reflecting hyperplanes of $\Ac$ of dimension $d$.

    This conjecture does follow from 
    rather extensive case-by-case studies by Orlik and Terao from 1992 and 1993, where they 
    show that all restrictions of 
    Coxeter arrangements are free.
    
    We call a general free arrangement with this natural property involving their free restrictions accurate.
    In this paper we initialize their systematic study.
    
    Our principal result shows that MAT-free arrangements, a notion recently introduced by Cuntz and M\"ucksch, are accurate.
    
    This theorem in turn directly implies this special property
    for all ideal subarrangements of Weyl arrangements.
    In particular, 
    this gives a new, simpler and uniform proof 
    of the aforementioned conjecture of Orlik, Solomon and Terao 
    for Weyl arrangements 
    which is free of any case-by-case considerations. 
    
    Another application of a slightly more general formulation of our main theorem
    shows that extended Catalan arrangements, extended Shi arrangements, and 
    ideal-Shi arrangements share this property as well.
    
    We also study arrangements that satisfy a slightly weaker condition, 
    called  almost accurate arrangements, 
    where we simply disregard the ordering of the exponents involved. 
    This property in turn is implied by many well established concepts of freeness such as supersolvability and divisional freeness. 
\end{abstract}


\keywords{Free arrangements, Coxeter arrangements, Weyl arrangements, Ideal arrangements, MAT-free arrangements, accurate arrangements, %
    extended Catalan arrangements, extended Shi arrangements, ideal-Shi arrangements}
\subjclass[2010]{Primary: 20F55; Secondary: 51F15, 52C35, 32S22}

\maketitle


\tableofcontents


\section{Introduction}

The Coxeter arrangement $\Ac (W)$ of a Coxeter group $W$ is the hyperplane arrangement in the reflection representation of $W$ which consists of the reflecting hyperplanes associated with the reflections in $W$.
Coxeter arrangements are pivotal to the theory of hyperplane arrangements. 
In the late 1970s Arnold \cite{Arn76_CoxeterFree1}, \cite{Arn79_CoxeterFree2} 
and Saito \cite{Sai75_CoxeterFree1}, \cite{Sai80_CoxeterFree2} 
showed independently that every Coxeter arrangement is free and that its 
exponents are given by the exponents of the underlying Coxeter group. 

In  \cite{OrSol83_CoxeterArr}, Orlik and Solomon
computed the characteristic polynomial
of every restriction $\Ac (W)^X$ of each Coxeter arrangement $\Ac (W)$ 
to some intersection $X$ of reflecting hyperplanes of $\Ac(W)$
in long and intricate computations.
They showed that each such factors over $\ZZ$.
Moreover, they 
observed that for each $1 \leq d \leq \ell$, where $\ell$ is the dimension of the reflection representation of $W$, there exists 
such a restriction $\Ac (W)^X$ such that the roots of the 
characteristic polynomial of $\Ac (W)^X$ are the first $d$ 
exponents of $\Ac (W)$
when ordered by size. 

In view of this fact, owing to 
\cite[Thm.~1.12]{OST1987_CoxeterArrNumber}
(see Theorem \ref{Thm_RestHWeyl}), 
and Terao's Factorization Theorem for free arrangements
\cite{terao:freefactors} 
(cf.~\cite[Thm.\ 4.137]{OrTer92_Arr}),
Orlik, Solomon and Terao conjectured in  \cite[Conj.~1.14]{OST1987_CoxeterArrNumber} 
that an even stronger property might hold. 
Namely that 
the first $d$ exponents of $\Ac (W)$ 
when ordered by size
are realized as the exponents of some free restriction $\Ac (W)^X$ 
of $\Ac(W)$ for every $1 \leq d \leq \ell$
(Theorem \ref{thm:otsconj}), where $\ell$ is as above.

In long and intricate case-by-case studies, 
Orlik and Terao showed in 
\cite[\S 6.4]{OrTer92_Arr} and 
\cite{OrTer92_CoxeterArrRestr} that indeed 
all the restricted arrangements 
$\Ac(W)^X$ are free for a Coxeter group $W$.
In case $W$ is a Weyl group, Douglass \cite[Cor.~6.1]{douglass:adjoint}
gave a uniform proof of this fact by means of an elegant, conceptual Lie theoretic argument.
But the latter does not provide any information on the exponents of the restrictions at hand. 

From the numerical data obtained 
in \cite{OrSol83_CoxeterArr} and 
the results from  
\cite[\S 6.4]{OrTer92_Arr} and 
\cite{OrTer92_CoxeterArrRestr}
along with \cite[Thm.~1.12]{OST1987_CoxeterArrNumber} 
one can readily confirm the aforementioned conjecture \cite[Conj.~1.14]{OST1987_CoxeterArrNumber} simply by checking all instances 
-- the only known proof of this conjecture to date.
As a consequence of our results,	
we give a new proof of this fact 
in the case of Weyl arrangements 
which is uniform and free of any case-by-case considerations.

The goal of this paper is to initialize a systematic study 
of this natural property in general. 

\begin{definition}
    \label{Def_TF}
    An arrangement $\Ac$ is said to be \emph{accurate}
    if $\Ac$ is free with exponents $\exp(\Ac) = (e_1 \leq e_2 \leq \ldots \leq e_\ell)$ 
    and for every $1 \leq d \leq \ell$ there exists an intersection $X$ of hyperplanes  from  
    $\Ac$ of dimension $d$ such that $\Ac^{X}$ is free with $\exp(\Ac^{X}) = (e_1 \leq e_2 \leq \ldots \leq e_{d})$.
\end{definition}

In \cite{ABCHT16_FreeIdealWeyl}, Abe, Barakat, Cuntz, Hoge and Terao proved the
so-called  \emph{Multiple Addition Theorem} (MAT) (Theorem \ref{Thm_MAT})
which is a variation
of the addition part of Terao's seminal Addition-Deletion Theorem \cite{Terao1980_FreeI} (\cite[Thm.~4.51]{OrTer92_Arr}).
Using this theorem, they went on
to uniformly derive the freeness of \emph{ideal subarrangements} of Weyl arrangements (Definition \ref{DEF_ideal} and Theorem \ref{Thm_IdealFree}).
As a special case of this result, they obtained a new uniform proof of the
classical Kostant-Macdonald-Shapiro-Steinberg formula for the exponents of a Weyl group. 

\bigskip

In \cite{CunMue19_MATfree}, Cuntz and M\"ucksch introduced the notion of \emph{MAT-freeness} (Definition \ref{Def_MATfree})
to investigate arrangements whose freeness can be derived using an iterative application of the Multiple Addition Theorem (Theorem \ref{Thm_MAT}), 
similar to the class of ideal arrangements.

Our principal result asserts that MAT-freeness
is sufficient for accuracy from Definition \ref{Def_TF}.

\begin{theorem}
    \label{Thm_MATRest}
    MAT-free arrangements are accurate.
\end{theorem}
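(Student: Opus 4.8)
The plan is to prove Theorem \ref{Thm_MATRest} by induction on the rank $\ell$ of $\Ac$, using the recursive structure that MAT-freeness provides. Recall that if $\Ac$ is MAT-free, then there is a chain of subarrangements, built up by successive applications of the Multiple Addition Theorem; at the top step one adds a block of hyperplanes $\{H_1,\dots,H_k\}$ to a MAT-free arrangement $\Bc$ of rank $\ell$ (or $\ell-1$), and the exponents of $\Ac$ are obtained from those of $\Bc$ by incrementing a suitable subset and appending the new top exponent. The key observation to exploit is that the datum witnessing the final MAT step singles out a distinguished hyperplane, and the restriction of $\Ac$ to that hyperplane should again be MAT-free, with exponents exactly the truncation of $\exp(\Ac)$. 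So the first step is to make precise how a MAT-free filtration of $\Ac$ induces a MAT-free filtration of a well-chosen restriction $\Ac^{H}$ for $H\in\Ac$, and to track what happens to the exponents: I expect that $\exp(\Ac^H) = (e_1 \le \dots \le e_{\ell-1})$, i.e.\ the largest exponent $e_\ell$ is precisely the one contributed by the final MAT block and is the one that disappears upon restriction.

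Granting that, the induction is straightforward: given the MAT-free arrangement $\Ac$ of rank $\ell$, produce the distinguished $H$ with $\Ac^H$ MAT-free of rank $\ell-1$ and $\exp(\Ac^H) = (e_1 \le \dots \le e_{\ell-1})$; this handles $d = \ell$ (take $X = V$) and $d = \ell-1$ (take $X = H$). For $d < \ell - 1$, apply the inductive hypothesis to $\Ac^H$: since $\Ac^H$ is accurate, there is an intersection $Y$ of hyperplanes of $\Ac^H$ of dimension $d$ with $(\Ac^H)^Y$ free and $\exp((\Ac^H)^Y) = (e_1 \le \dots \le e_d)$. Because intersections of hyperplanes of $\Ac^H$ are exactly the intersections $X \cap H$ with $X$ in the intersection lattice of $\Ac$ containing $H$, and $(\Ac^H)^Y = \Ac^Y$ as arrangements on $Y$, this $Y$ is the required intersection of hyperplanes of $\Ac$ of dimension $d$. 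Thus accuracy of $\Ac$ follows by feeding the whole chain of restrictions to the hyperplane $H$ at each rank.

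The main obstacle, and the crux of the argument, is the first step: showing that the restriction to a correctly chosen hyperplane of the final MAT block is MAT-free with the predicted exponents. This requires understanding how a MAT step interacts with restriction. Concretely, when $\Ac = \Bc \cup \{H_1,\dots,H_k\}$ arises from a MAT step over $\Bc$, one wants to choose $H = H_i$ for a suitable $i$ and show that $\Ac^{H_i}$ is again obtained from $\Bc^{H_i}$ (or a closely related arrangement) by MAT steps; this is where one must carefully use the defining combinatorial/linear-algebraic conditions of MAT-freeness (the conditions on the ranks of the subsets $\Ac_0 \cup \{H_1,\dots,H_j\}$ and on multiplicities in the restriction) to verify that the corresponding conditions hold downstairs. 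It is plausible that a cleaner route is available via a ``relative'' or more general version of the Multiple Addition Theorem phrased directly for restrictions — indeed the excerpt hints at ``a slightly more general formulation of our main theorem'' — and I would try to isolate such a statement (an addition-restriction compatible version of MAT) as a standalone lemma, then deduce Theorem \ref{Thm_MATRest} from it by the induction above. I would expect the bookkeeping of which exponents get incremented in the MAT step, and matching it against the truncation $(e_1,\dots,e_{\ell-1})$, to be the delicate part demanding the most care.
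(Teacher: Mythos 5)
Your proposed induction hinges on a lemma you do not prove and which is false in general: that for a suitable hyperplane $H$ in the final MAT block, the restriction $\Ac^H$ is again MAT-free (so that the inductive hypothesis applies to it). It is true that such an $H$ gives a free restriction with exponents $(e_1\le\dots\le e_{\ell-1})$ -- this follows from Lemma \ref{Lem_BasisMATStep} and Proposition \ref{Prop_AddDelRestr} -- but MAT-freeness does \emph{not} pass to restrictions: restrictions of Coxeter arrangements (which are MAT-free) already fail to be MAT-free in general, cf.\ \cite[Ex.~22]{CunMue19_MATfree}, and Remark \ref{rem_MATRest}(iii) records that accuracy itself is not hereditary. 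Since your induction hypothesis is ``MAT-free $\Rightarrow$ accurate,'' you cannot apply it to $\Ac^H$ without first establishing that $\Ac^H$ is MAT-free, and replacing that by ``$\Ac^H$ is accurate'' is circular. You flag this step as the crux yourself, but the difficulty is not mere bookkeeping of exponents; it is that the class of MAT-free arrangements is not closed under the operation your recursion needs.

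The paper's proof avoids rank induction entirely. For each target dimension it chooses $X$ to be the intersection of $q$ hyperplanes $\Cc$ lying inside a \emph{single} block $\pi_k$ of the MAT-partition (this also handles repeated exponents in one stroke, which an iterated single-hyperplane descent cannot do without knowing accuracy of each intermediate restriction). Two things are then shown: first, $(\Ac_k)^X$ is free with exponents $(e_1,\dots,e_{\ell-q})_\leq$, which is comparatively easy (Corollary \ref{Coro_MAT-stepRestriction}); second -- and this is the actual crux -- $\Ac^X=(\Ac_k)^X$, i.e.\ no hyperplane added after stage $k$ produces a new hyperplane on $X$. This identification (Lemma \ref{Lem_MATSurj}) is proved by carefully propagating part of a basis of the derivation module of $\Ac_k\setminus\Cc$ into $D(\Ac)$ across all subsequent MAT-steps (Propositions \ref{Prop_TwoMATsteps} and \ref{Prop_BasisDMAT-Part}) and then applying the tangent-space evaluation argument of Proposition \ref{Prop_FreeSubarrIneqRk} to bound the rank of flats missing $\Ac_k\setminus\Cc$. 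None of this machinery appears in your outline, and without a substitute for it the argument does not close.
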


Indeed, we prove a more detailed result in Theorem \ref{Thm_MATRest_sec} as follows. 
Firstly, we determine which restrictions $\Ac^{X}$ of an MAT-free arrangement $\Ac$ realize the subsequences of the
exponents of $\Ac$. They are the intersections of hyperplanes contained in one of the blocks of a certain  partition associated to
every MAT-free arrangement, a so called \emph{MAT-partition} (Definition \ref{Def_MATfree}).

Secondly, we delineate  bases of the derivation module of an MAT-free arrangement $\Ac$ which restrict to
bases of the derivation modules of the relevant restricted arrangements $\Ac^{X}$.
For details see Theorem \ref{Thm_MATRest_sec}.

The methods we use to derive  our results are inspired in part by
arguments employed in the proof of Theorem \ref{Thm_MAT} from \cite{ABCHT16_FreeIdealWeyl}.

\bigskip

As ideal subarrangements of Weyl arrangements are MAT-free, by \cite{ABCHT16_FreeIdealWeyl},  
Theorem \ref{Thm_MATRest}
readily yields our next key result.

\begin{theorem}
    \label{Thm_HtRestIdeal} %
    Ideal arrangements are accurate.
\end{theorem}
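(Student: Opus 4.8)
The goal is to derive Theorem \ref{Thm_HtRestIdeal} from Theorem \ref{Thm_MATRest}, so the plan is essentially a reduction: show that every ideal subarrangement of a Weyl arrangement is MAT-free, and then invoke the main theorem. First I would recall the precise statement of Theorem \ref{Thm_IdealFree} and the surrounding setup: an ideal subarrangement $\Ac_{\Ie}$ is determined by an order ideal $\Ie$ in the poset of positive roots of a reduced root system $\Phi$, consisting of the reflecting hyperplanes associated with the roots in $\Ie$. The freeness of $\Ac_{\Ie}$ was established in \cite{ABCHT16_FreeIdealWeyl} precisely by an iterated application of the Multiple Addition Theorem (Theorem \ref{Thm_MAT}), which is exactly the mechanism that the notion of MAT-freeness (Definition \ref{Def_MATfree}) abstracts. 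So the substance of the argument is the observation that the inductive construction in \cite{ABCHT16_FreeIdealWeyl} produces, by its very nature, an MAT-partition of $\Ac_{\Ie}$.

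The key step is therefore to exhibit the MAT-partition explicitly. I would take the partition of $\Ie$ into its "height layers": for each $i$, let $\pi_i$ be the set of hyperplanes corresponding to roots of height $i$ lying in $\Ie$. Because $\Ie$ is an order ideal in the root poset, the union of the first $j$ layers is again an ideal, and the chain of subarrangements obtained by adding one layer at a time is exactly the chain along which the Multiple Addition Theorem is applied in \cite{ABCHT16_FreeIdealWeyl}. The three conditions in the definition of an MAT-partition — the combinatorial/intersection-lattice condition on the newly added hyperplanes relative to the previously built flat, the rank condition, and the exponent-matching condition forcing the new exponents to equal the number of blocks — are precisely the hypotheses verified layer-by-layer in that paper (they follow from the fact that the roots of a given height in an ideal form an antichain whose hyperplanes are "independent modulo" the span of the lower layers, together with the combinatorial identities between the dual partition of $\Ie$ and the exponents). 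Hence $(\pi_1, \pi_2, \ldots)$ is an MAT-partition, so $\Ac_{\Ie}$ is MAT-free.

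Once MAT-freeness of $\Ac_{\Ie}$ is in hand, Theorem \ref{Thm_MATRest} applies verbatim and yields that $\Ac_{\Ie}$ is accurate, which is the assertion of Theorem \ref{Thm_HtRestIdeal}. It is worth noting that one gets slightly more from the refined version Theorem \ref{Thm_MATRest_sec}: the distinguished flats $X$ realizing the initial segments of the exponents are the intersections of hyperplanes drawn from a single block of the MAT-partition, i.e.\ from a single height layer of $\Ie$, and one obtains explicit bases of the derivation modules of these restrictions. I would record this as a remark.

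The main obstacle is not conceptual but bookkeeping: one must make sure that the "height-layer" chain really does satisfy \emph{all} clauses of Definition \ref{Def_MATfree} — in particular the rank/exponent conditions — rather than merely that \emph{some} chain does. Strictly speaking this is already contained in \cite{ABCHT16_FreeIdealWeyl}, since MAT-freeness was designed to encapsulate exactly that argument; so the honest statement of the proof is: "the proof of Theorem \ref{Thm_IdealFree} in \cite{ABCHT16_FreeIdealWeyl} shows that the partition of $\Ac_{\Ie}$ by root height is an MAT-partition, hence $\Ac_{\Ie}$ is MAT-free, and the claim follows from Theorem \ref{Thm_MATRest}." If one wanted a self-contained treatment, the delicate point to re-prove would be that passing from the union of the first $j-1$ layers to the first $j$ layers meets the intersection-lattice hypothesis of the Multiple Addition Theorem, which rests on the structure of root posets of ideals; but given the stated license to assume earlier results, I would simply cite \cite{ABCHT16_FreeIdealWeyl} for this and keep the proof to a few lines.
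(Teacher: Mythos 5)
Your proposal matches the paper's proof: the paper likewise records (as Theorem \ref{Thm_IdealFree}, quoting \cite{ABCHT16_FreeIdealWeyl}) that an ideal subarrangement is MAT-free with the root-height partition as its MAT-partition, and then applies Theorem \ref{Thm_MATRest_sec} to conclude accuracy, with the distinguished flats given by intersections of hyperplanes corresponding to roots of a single height (antichains in the root poset), exactly as in your closing remark. No gaps; this is the intended argument.
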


The blocks of the associated MAT-partition are realized by roots of the same height
in the given ideal. Hence in the case of ideal arrangements the intersections of hyperplanes
realizing the particular exponents are given by antichains
in the root poset consisting of roots of the same height. 
For further details see Section \ref{SSec_WeylIdeal}.

We record the following special case from Theorem \ref{Thm_HtRestIdeal}.

\begin{corollary}
    \label{Coro_HighestRootWeyl}
    Let $\Ac = \Ac(W)$ be an irreducible Weyl arrangement with $\exp(\Ac) = (e_1 \leq e_2 \leq \ldots \leq e_\ell)$
    and let $H \in \Ac$ be the hyperplane corresponding to the highest root of an irreducible root system associated to $W$.
    Then $\Ac^H$ is free with $\exp(\Ac^H) = (e_1 \leq e_2 \leq \ldots \leq e_{\ell-1})$.
\end{corollary}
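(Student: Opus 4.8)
The plan is to derive Corollary \ref{Coro_HighestRootWeyl} as a direct specialization of Theorem \ref{Thm_HtRestIdeal} applied to the full Weyl arrangement $\Ac = \Ac(W)$, which is itself the ideal subarrangement corresponding to the top ideal (all positive roots). Since Theorem \ref{Thm_HtRestIdeal} tells us that ideal arrangements are accurate, $\Ac(W)$ is accurate: it is free with exponents $\exp(\Ac) = (e_1 \leq \cdots \leq e_\ell)$, and for each $1 \leq d \leq \ell$ there is an intersection $X$ of hyperplanes of dimension $d$ with $\Ac^X$ free and $\exp(\Ac^X) = (e_1 \leq \cdots \leq e_d)$. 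We want the case $d = \ell - 1$, and we want to pin down that the realizing $X$ can be taken to be the single hyperplane $H$ attached to the highest root.

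First I would recall from Section \ref{SSec_WeylIdeal} that for an ideal arrangement the blocks of the associated MAT-partition are the sets of roots of fixed height, and that the restrictions realizing the exponent subsequences are the intersections of the hyperplanes lying in a single block. For the full Weyl arrangement $\Ac(W)$ of an irreducible root system, the unique root of maximal height is the highest root $\tilde\alpha$, so the top block of the MAT-partition is the singleton $\{\tilde\alpha\}$, and the corresponding "intersection of hyperplanes in that block" is just the hyperplane $H = \ker \tilde\alpha$ itself, which has dimension $\ell - 1$. Thus the detailed version of the main theorem (Theorem \ref{Thm_MATRest_sec}), specialized through Theorem \ref{Thm_HtRestIdeal}, yields precisely that $\Ac^H$ is free with exponents the first $\ell - 1$ entries of $\exp(\Ac)$, i.e. $\exp(\Ac^H) = (e_1 \leq \cdots \leq e_{\ell - 1})$.

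The one point that needs a short argument — and what I expect to be the only mild obstacle — is matching the combinatorial description of the top block with the classical fact that an irreducible root system has a unique highest root, so that the top block really is a singleton and the associated restriction really is a single hyperplane of codimension one rather than a deeper intersection. This is standard root-system theory (the highest root is the unique maximal element of the root poset for an irreducible system), so it only requires a sentence. Once that is in place, no further computation is needed: freeness and the exponent statement are immediate consequences of accuracy of ideal arrangements.

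In short, the proof reads: apply Theorem \ref{Thm_HtRestIdeal} to $\Ac = \Ac(W)$ (an ideal arrangement), take $d = \ell - 1$, and observe via the height description of the MAT-partition (Section \ref{SSec_WeylIdeal}) together with the uniqueness of the highest root that the realizing intersection of dimension $\ell - 1$ is exactly $H = \ker\tilde\alpha$; hence $\Ac^H$ is free with $\exp(\Ac^H) = (e_1 \leq e_2 \leq \cdots \leq e_{\ell-1})$.
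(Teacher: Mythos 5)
Your proposal is correct and follows exactly the route the paper intends: the corollary is recorded as the special case of Theorem \ref{Thm_HtRestIdeal} (via Theorem \ref{Thm_HtRestIdeal_sec}) in which the top block of the root-height MAT-partition of $\Ac(W)$ is the singleton given by the unique highest root, so the realizing intersection for $q=1$ is the hyperplane $H=\ker\tilde\alpha$. The paper gives no further argument beyond this observation, so your write-up matches its proof in substance.
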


The highest root of an irreducible root system associated to the Weyl group $W$ is always long (cf.~\cite[Ch.~VI, Prop.~25]{bourbaki:groupes})
and roots of the same length are conjugate under the action of $W$ (cf.~\cite[Ch.~VI, Prop.~11]{bourbaki:groupes}).
Further, as the linear transformation mapping a root system to its dual, interchanging long and short roots, 
fixes the ambient Weyl arrangement (cf.~\cite[Ch.~VI, Prop.~1 ff.]{bourbaki:groupes}), 
we recover the following classical result already mentioned
above  for the case of Weyl arrangements from Corollary \ref{Coro_HighestRootWeyl}.

\begin{theorem}[{\cite[Thm.~1.12]{OST1987_CoxeterArrNumber}}]
    \label{Thm_RestHWeyl}
    Let $\Ac = \Ac(W)$ be an irreducible Coxeter arrangement
    with $\exp(\Ac) = (e_1 \leq e_2 \leq \ldots \leq e_\ell)$ and $H \in \Ac$.
    Then $\Ac^H$ is free with $\exp(\Ac^H) = (e_1 \leq e_2 \leq \ldots \leq e_{\ell-1})$.
\end{theorem}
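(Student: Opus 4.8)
The plan is to deduce Theorem~\ref{Thm_RestHWeyl} as a corollary of Corollary~\ref{Coro_HighestRootWeyl} by reducing from the general Coxeter case to the Weyl case via standard root system facts. First I would reduce to the case that $\Ac$ is an irreducible Weyl arrangement. Indeed, among irreducible Coxeter groups, those that are not Weyl groups are $H_3$, $H_4$, $I_2(m)$ for $m \notin \{3,4,6\}$, and the dihedral cases $I_2(3), I_2(4), I_2(6)$ which coincide with $A_2, B_2, G_2$ respectively; the non-crystallographic cases $H_3, H_4, I_2(m)$ can be checked directly, since for a rank-$2$ arrangement the only nontrivial restriction is to a point and the exponents are $(0)$ together with the fact that $e_1 = 1$ always, while for $H_3$ and $H_4$ the restrictions are known (and small enough to verify by hand, or one invokes the already-established freeness of all Coxeter restrictions). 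So the substantive content is the Weyl case.

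Next, assuming $W$ is an irreducible Weyl group, fix an irreducible (possibly non-reduced, but we may take reduced) root system $\Phi$ with Weyl group $W$, and let $\tilde\alpha \in \Phi$ be its highest root with corresponding hyperplane $H_{\tilde\alpha} \in \Ac$. By Corollary~\ref{Coro_HighestRootWeyl}, $\Ac^{H_{\tilde\alpha}}$ is free with $\exp(\Ac^{H_{\tilde\alpha}}) = (e_1 \leq \cdots \leq e_{\ell-1})$. It remains to promote this from the single hyperplane $H_{\tilde\alpha}$ to an \emph{arbitrary} $H \in \Ac$. Here I would invoke the transitivity of $W$ on hyperplanes of the same type: the highest root $\tilde\alpha$ is long (\cite[Ch.~VI, Prop.~25]{bourbaki:groupes}), and roots of a fixed length in an irreducible root system form a single $W$-orbit (\cite[Ch.~VI, Prop.~11]{bourbaki:groupes}); moreover replacing $\Phi$ by its dual root system $\Phi^\vee$ interchanges long and short roots while leaving the hyperplane arrangement $\Ac$ unchanged (\cite[Ch.~VI, Prop.~1 ff.]{bourbaki:groupes}), so every $H \in \Ac$ is the reflecting hyperplane of a root that is conjugate to a highest root (of $\Phi$ or of $\Phi^\vee$). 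Since $W$ acts on the ambient space by linear isomorphisms permuting the hyperplanes of $\Ac$, for $w \in W$ with $w(H_{\tilde\alpha}) = H$ the map $w$ induces an isomorphism of arrangements $\Ac^{H_{\tilde\alpha}} \cong \Ac^{H}$; freeness and exponents are invariant under such isomorphisms, so $\Ac^{H}$ is free with $\exp(\Ac^H) = (e_1 \leq \cdots \leq e_{\ell-1})$ as well.

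Finally, I would note the minor bookkeeping point that in the non-reduced case $BC_\ell$ the Weyl arrangement coincides with that of $B_\ell$, so no separate argument is needed, and that reducibility reduces immediately to the irreducible factors (exponents and free restrictions both behave multiplicatively with respect to products of arrangements), although the theorem as stated already restricts to the irreducible case. I expect the only genuine obstacle to be the handling of the non-crystallographic Coxeter groups $H_3$, $H_4$, and $I_2(m)$, which fall outside the MAT-free framework of Corollary~\ref{Coro_HighestRootWeyl}; for these one must either appeal to the known freeness of all Coxeter restrictions (from \cite[\S 6.4]{OrTer92_Arr} and \cite{OrTer92_CoxeterArrRestr}) together with the characteristic polynomial computations of \cite{OrSol83_CoxeterArr}, or else carry out the (short) direct verification. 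Everything else is a routine transfer of the Weyl-case result along the $W$-action.
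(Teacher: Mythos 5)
Your proposal is correct and follows essentially the same route as the paper: the paper likewise recovers the Weyl case from Corollary~\ref{Coro_HighestRootWeyl} by noting that the highest root is long, that roots of equal length are $W$-conjugate, and that passing to the dual root system interchanges long and short roots while fixing the ambient arrangement, with the general Coxeter statement attributed to \cite{OST1987_CoxeterArrNumber}. (One tiny slip in your rank-$2$ check: for $I_2(m)$ the restriction $\Ac^H$ is a single point inside the line $H$, hence free with exponent $(1)=(e_1)$, not $(0)$; this does not affect the argument.)
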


Because all Coxeter arrangements are MAT-free, owing to \cite{ABCHT16_FreeIdealWeyl} and
\cite{CunMue19_MATfree}, we obtain our third chief result as a consequence of Theorem \ref{Thm_MATRest}.

\begin{theorem}[{\cite[Conj.~1.14]{OST1987_CoxeterArrNumber}}]
	\label{thm:otsconj}
	Coxeter arrangements are accurate.
\end{theorem}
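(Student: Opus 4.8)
The plan is to derive this directly from Theorem~\ref{Thm_MATRest}. Since that theorem says every MAT-free arrangement is accurate, it suffices to show that the reflection arrangement $\Ac(W)$ of an arbitrary finite Coxeter group $W$ is MAT-free in the sense of Definition~\ref{Def_MATfree}, i.e.\ that it admits an MAT-partition. A reducible Coxeter arrangement is the product of its irreducible factors, and MAT-freeness is preserved under products of arrangements, so we may assume that $W$ is irreducible.

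Assume first that $W$ is crystallographic, that is, a Weyl group, with positive system $\Phi^{+}$ and associated root poset. The whole poset $\Phi^{+}$ is, trivially, an order ideal in itself, so $\Ac(W)$ is an ideal subarrangement in the sense of Definition~\ref{DEF_ideal}. By \cite{ABCHT16_FreeIdealWeyl} (Theorem~\ref{Thm_IdealFree}), the freeness of an ideal subarrangement of a Weyl arrangement is obtained through an iterated application of the Multiple Addition Theorem (Theorem~\ref{Thm_MAT}) in which, at the $k$-th step, one adjoins the hyperplanes corresponding to the roots of height $k$ lying in the ideal. For the ideal $\Phi^{+}$ this is exactly a stratification of $\Ac(W)$ by the height levels of the root poset, and reading it off as an ordered partition exhibits an MAT-partition of $\Ac(W)$; hence $\Ac(W)$ is MAT-free. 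This is the way MAT-freeness of Weyl arrangements is recorded in \cite{CunMue19_MATfree}.

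It remains to treat the non-crystallographic irreducible Coxeter groups, namely $H_{3}$, $H_{4}$, and the dihedral groups $I_{2}(m)$ with $m=5$ or $m\ge 7$. In the dihedral case $\Ac(I_{2}(m))$ has rank $2$, and an MAT-partition, together with the value $\exp(\Ac(I_{2}(m)))=(1,m-1)$, is produced by a direct verification of the conditions of Definition~\ref{Def_MATfree}. The two exceptional groups $H_{3}$ and $H_{4}$ amount to a finite check, which is carried out in \cite{CunMue19_MATfree}. Combining the crystallographic and non-crystallographic cases, $\Ac(W)$ is MAT-free for every finite Coxeter group $W$, and Theorem~\ref{Thm_MATRest} then yields that $\Ac(W)$ is accurate.

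Since the argument is only a deduction from Theorem~\ref{Thm_MATRest}, there is no serious obstacle: all the genuine work sits in that theorem and in the previously established MAT-freeness of Coxeter arrangements. The only points requiring care are the essentially bookkeeping observation that the full Weyl arrangement really is an instance of the ideal-arrangement framework, via the improper ideal $\Phi^{+}$, and the finite --- though not wholly mechanical --- verification of the MAT-partition conditions in types $H_{3}$ and $H_{4}$; both are available in the literature cited above.
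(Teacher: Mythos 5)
Your proof is correct and follows essentially the same route as the paper: the paper likewise deduces Theorem~\ref{thm:otsconj} directly from Theorem~\ref{Thm_MATRest} together with the MAT-freeness of all Coxeter arrangements, citing \cite{ABCHT16_FreeIdealWeyl} for the Weyl case (via the improper ideal $\Phi^{+}$) and \cite{CunMue19_MATfree} for the remaining non-crystallographic types. Your additional unpacking of the reduction to irreducible factors and of the dihedral verification is accurate but does not change the argument.
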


\begin{remarks}	
    \label{rem_MATRest}
    (i)
        In \cite{ABCHT16_FreeIdealWeyl}, MAT-freeness is derived uniformly
		for the Weyl arrangement case. 
		So our methods give a new uniform proof of Theorem \ref{thm:otsconj} for this case
		which is considerably simpler than the original one indicated above, and most importantly it does not 
		require the classification of the Weyl arrangements nor that of their restrictions.

    (ii)
		The converse of Theorem \ref{Thm_MATRest} is false. 
		The rank $3$ supersolvable simplicial arrangement $\Ac(10,1)$ with $10$ hyperplanes (cf.~\cite{Grue09_SimplArr}) is accurate 
		but it is not MAT-free, by \cite[Ex.~3.10]{CunMue19_MATfree}.
	
    (iii)
		In general an accurate  arrangement $\Ac$ may admit a free restriction whose exponents 
		fail to be a subset of the exponents of $\Ac$, e.g.~the lattice of the reflection arrangement of the Coxeter group of type $E_6$ admits several such instances.
		
		It even might be the case that an accurate arrangement admits a restriction  which is not free at all. For instance, there are 
		examples of ideal subarrangements in the Weyl arrangements of type $D_n$ which admit rank 3 restrictions which fail to be free, cf.~\cite[Rem.~3.6]{amendmoellerroehrle18}.
		In particular, accuracy is not hereditary in general. See also Example \ref{Ex_GraphNotAcc} for such an instance in a graphic arrangement.

    (iv) Also accuracy is not preserved under taking localizations, see Examples \ref{Ex_GraphNotAcc} and \ref{ex:local}.
    
    (v)
		A product of arrangements is accurate if and only if each factor is accurate, cf.~\cite[Prop.~2.14, Prop.~4.28]{OrTer92_Arr}.
\end{remarks}

\bigskip

Our methods actually apply in a slightly more general setting.
Instead of considering only MAT-free arrangements, i.e.\
arrangements build up from the empty arrangement using Theorem \ref{Thm_MAT},
we can instead start with a suitable free arrangement (which need not be empty)
and add hyperplanes successively by means of Theorem \ref{Thm_MAT} (Theorem \ref{Thm_MATRestGen}). 

Utilizing this fact together with a recent observation by Abe and Terao from \cite{AbeTer18_MultAddDelRes},
and a special case of a result by the same authors from \cite{AbeTer15_IdealShi}
gives our fourth main theorem, demonstrating that yet another prominent class of arrangements is accurate,
namely the so called \emph{ideal-Shi arrangements} $\Shi^k_\Ic$ and 
the \emph{extended Catalan arrangements} $\Cat^k$ (Definition \ref{Def_IdealShiCatalan}).
For details, see Section \ref{SSec_IdealShiCatalan}.
\begin{theorem}
    \label{Thm_ResShiCatTF}
    Extended Shi arrangements $\Shi^k$, ideal-Shi arrangements $\Shi^k_\Ic$ and extended Catalan arrangements $\Cat^k$ are accurate.
\end{theorem}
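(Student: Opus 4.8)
The plan is to reduce Theorem \ref{Thm_ResShiCatTF} to the generalized version of our main result, Theorem \ref{Thm_MATRestGen}, which allows us to start the MAT-construction from a nonempty free arrangement rather than from the empty one. So the task breaks into two pieces: first, exhibit each of the three families --- $\Shi^k$, $\Shi^k_\Ic$, and $\Cat^k$ --- as arrangements obtained from a suitable free base arrangement by successively adding hyperplanes via the Multiple Addition Theorem (Theorem \ref{Thm_MAT}), so that Theorem \ref{Thm_MATRestGen} applies and yields accuracy; second, identify explicitly the intersections $X$ realizing the initial segments of the exponents.

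**First I would** handle the extended Catalan arrangements $\Cat^k$. Here I would invoke the result of Abe and Terao from \cite{AbeTer15_IdealShi}: the extended Catalan arrangement is, up to the coning/deconing bookkeeping, built by iterated MAT-additions, and in fact one can take the base arrangement to be the Coxeter (reflection) arrangement $\Ac(W)$ itself --- which is MAT-free by \cite{ABCHT16_FreeIdealWeyl}, \cite{CunMue19_MATfree} --- and then add the affine ``copies'' of each hyperplane in levels. Since each level adds, for a fixed root, a batch of parallel hyperplanes of the same multiplicity, this is precisely the setup of Theorem \ref{Thm_MAT}, so $\Cat^k$ falls under Theorem \ref{Thm_MATRestGen} and is therefore accurate. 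The observation of Abe and Terao in \cite{AbeTer18_MultAddDelRes} on multiple addition-deletion-restriction is what makes the passage through the restrictions compatible with the MAT-partition.

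**Next I would** treat the ideal-Shi arrangements $\Shi^k_\Ic$, which interpolate between the extended Shi arrangement (the case $\Ic$ = whole positive system) and $\Cat^{k-1}$ or $\Ac(W)$ (the empty ideal), and which specialize to $\Shi^k$ for the maximal ideal. The key input is again \cite{AbeTer15_IdealShi}, where freeness of $\Shi^k_\Ic$ is established by an MAT-type argument: one starts from the Coxeter arrangement (or an ideal subarrangement of it, which is MAT-free by Theorem \ref{Thm_IdealFree} and \cite{CunMue19_MATfree}) and adds the affine hyperplanes dictated by $\Ic$ and the level $k$ in batches governed by height in the root poset. Feeding this MAT-construction into Theorem \ref{Thm_MATRestGen} gives accuracy for $\Shi^k_\Ic$, and the case $\Shi^k$ is then the special instance with $\Ic$ maximal. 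In all three cases the relevant restrictions $\Ac^X$ are those given by intersecting hyperplanes lying in a single block of the MAT-partition, exactly as in Theorem \ref{Thm_MATRest_sec}; for $\Shi^k$ and $\Cat^k$ these blocks are indexed by height levels of roots together with the level parameter, so the realizing intersections $X$ are described by antichains of roots of equal height, paralleling the ideal-arrangement picture of Section \ref{SSec_WeylIdeal}.

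**The main obstacle** I anticipate is purely bookkeeping: verifying that the freeness proofs in \cite{AbeTer15_IdealShi} genuinely produce an MAT-construction \emph{over a free base} in the precise sense demanded by Theorem \ref{Thm_MATRestGen} --- i.e.\ that at each stage the batch of added hyperplanes satisfies the three numerical/combinatorial conditions of Theorem \ref{Thm_MAT} relative to the arrangement built so far --- and that the coning used to pass between affine and central arrangements does not disturb the MAT-partition structure or the exponent segments. Once the construction is pinned down in this form, accuracy is immediate from Theorem \ref{Thm_MATRestGen}, and the explicit description of the restrictions follows from Theorem \ref{Thm_MATRest_sec} together with the combinatorics of the root poset; I would relegate these verifications to Section \ref{SSec_IdealShiCatalan}.
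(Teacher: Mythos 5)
Your overall strategy --- exhibit each family as an MAT-construction over a nonempty free base and invoke Theorem \ref{Thm_MATRestGen} --- is the right one and matches the paper's, but your choice of base arrangement is wrong, and this is not the ``purely bookkeeping'' obstacle you describe. You propose starting from the Coxeter arrangement $\Ac(W)$ (or an ideal subarrangement) and adding the affine translates ``in levels.'' No such MAT-construction exists: condition (1) of Theorem \ref{Thm_MAT} forces each batch to consist of at most $\rk(\Ac)=\ell+1$ hyperplanes cutting out a flat of codimension equal to the batch size, whereas a full level of $\Shi^k$ or $\Cat^k$ consists of $|\Phi^+|$ parallel translates; moreover the exponents would have to climb from $(1,e_1,\ldots,e_\ell)$ to $(1,hk,\ldots,hk)$, and no MAT filtration realizing that passage is known. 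What \cite{AbeTer15_IdealShi} and \cite{AbeTer18_MultAddDelRes} actually supply is an MAT-construction of $\Shi^k_\Ic$ starting from the \emph{extended Shi arrangement} $\Shi^k$ --- whose freeness is Yoshinaga's theorem \cite{Yos04_CharaktFree}, used as a black box --- by adding only the hyperplanes $H_\beta^{-k}$, $\beta\in\Ic$, in batches governed by height.

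The more serious omission is the device that turns Theorem \ref{Thm_MATRestGen} into a proof of \emph{accuracy} rather than of the freeness of some restrictions. Theorem \ref{Thm_MATRestGen} only produces restrictions of corank $q\le p_1=|\pi_1|$. If one takes $\Ac'=\Shi^k$ as the base, the first block is $\{H_\beta^{-k}\mid\beta\in\Ic,\ \h(\beta)=1\}$, whose size equals the number of simple roots in $\Ic$; this can be much smaller than $\ell$ (e.g.\ for a principal ideal generated by a single simple root), and it is zero for $\Shi^k$ itself, so the theorem would say nothing at all in that case. The paper's fix --- and the genuine content of Section \ref{SSec_IdealShiCatalan} --- is Proposition \ref{Prop_Shi-SimplRoots} together with Lemma \ref{Lem_Shi-SimplRootMATStep}: one deletes the $\ell$ hyperplanes $H^k_\alpha$, $\alpha\in\Delta$, from $\Shi^k$, takes the free arrangement $\Shi^k_{-\Delta}$ with exponents $(1,hk-1,\ldots,hk-1)$ as the base $\Ac'$, and re-adds these hyperplanes as a first MAT-step $\pi_1$ of size exactly $\ell$, before appending the height blocks of $\{H_\beta^{-k}\mid\beta\in\Ic\}$. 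Verifying condition (2) of Theorem \ref{Thm_MAT} for this re-addition requires a short root-theoretic argument. Without this step your argument does not cover $\Shi^k$ at all and establishes accuracy of $\Shi^k_\Ic$ only when $\Ic$ contains every simple root.
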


\bigskip

In Section \ref{ssect:divfree},
we study arrangements that satisfy a slightly weaker property, so called  almost accurate arrangements, see Definition \ref{Def_aTF},  
where compared to accurate arrangements, 
we simply disregard the ordering of the exponents involved. 
This property in turn is 
easily seen to be implied by many well established concepts of freeness such as supersolvability or divisional freeness, for instance. 

However, in contrast it turns out that in general, accuracy is not implied by supersolvability (so neither by inductive freeness nor by divisional freeness), see Example \ref{Ex_SSbnTF}.

On the other hand there are accurate arrangements which are not divisionally free, see Example \ref{ex:star-but-not-divfree}.
So in particular, accuracy is not implied by divisional freeness in general. 
Consequently, the recent result by Cuntz, R\"ohrle and Schauenburg \cite{CRS17_IdealIF}, asserting that all ideal arrangements
are divisionally free does not imply Theorem \ref{Thm_HtRestIdeal}.

\bigskip
In Section \ref{ssect:graph}, we consider free graphic arrangements under the aspect of 
accuracy.
In Example \ref{Ex_GraphNotAcc}, we present a non-accurate free graphic arrangement which admits an extension by one vertex which gives an accurate graphic arrangement.
This example also shows that accuracy is neither preserved under localizations nor under restrictions.

\bigskip
In view of the original theme of 
Orlik, Solomon and Terao for Coxeter arrangements (Theorem \ref{thm:otsconj}), 
we consider 
complex reflection arrangements and their restrictions in our final Section \ref{ssect:refl}, and 
determine all accurate members among them
(see Theorems \ref{thm:complex} and \ref{thm:restr}).
Along the way we show that accuracy from our motivating case of Coxeter arrangements (Theorem \ref{thm:otsconj}) descends to restrictions of the latter
(Corollary \ref{cor:restr-Coxeter}).

\bigskip

For general information about arrangements, Weyl groups and root systems,  
we refer the reader to \cite{bourbaki:groupes} and 
\cite{OrTer92_Arr}.


\section{Preliminaries}
\label{ssect:recoll}

\subsection{Hyperplane arrangements}
\label{ssect:hyper}

Let $\Ac = (\Ac, V)$ be a hyperplane arrangement in $V \cong \KK^\ell$ where $\KK = \RR$ or $\KK = \CC$.
If we want to emphasize the dimension $\ell$ of the ambient vector space
we say that $\Ac$ is an $\ell$-arrangement. 
The \emph{intersection lattice} of $\Ac$ is 
$L(\Ac) = \left\{ \cap_{H \in \Bc} H \mid \Bc \subseteq \Ac \right\}$.

Associated with $X \in L(\Ac)$ we have two canonical arrangements, 
the \emph{localization $\Ac_X$} of $\Ac$ at $X$, given by
\[
\Ac_X := \{H \in \Ac  \mid H \supseteq X \},
\]
and the \emph{restriction $\Ac^X$ of $\Ac $ to $X$}, defined by
\[
\Ac^X := \{H \cap X  \mid H \in \Ac\setminus \Ac_X  \}.
\]

The \emph{rank} of $X \in L(\Ac)$ is defined as $\rk(X) := \ell - \dim(X)$
and the rank of $\Ac$ as $\rk(\Ac) = \rk(\cap_{H \in \Ac}H)$.

Let $S = S(V^*)$ be the symmetric algebra of the dual space $V^*$.
We fix a basis $x_1,\ldots,x_\ell$ for $V^*$ and identify $S$ with the polynomial ring $\KK[x_1,\ldots,x_\ell]$.
The algebra $S$ is equipped with the grading by polynomial degree: $S = \bigoplus_{p\in \ZZ} S_p$,
where $S_p$ is the $\KK$-space of homogeneous polynomials of degree $p$ (along with $0$), where $S_p = \{0\}$ for $p < 0$.

\subsection{Free arrangements}
\label{SSec_freeArr}

A $\KK$-linear map $\theta:S\to S$ which satisfies $\theta(fg) = \theta(f)g + f\theta(g)$ is called a $\KK$-\emph{derivation}.
Let $\Der(S)$ be the $S$-module of $\KK$-derivations of $S$. 
It is a free $S$-module with basis $\ddxi{1},\ldots,\ddxi{\ell}$.
We say that $\theta \in \Der(S)$ is \emph{homogeneous of degree} $p$ provided
$\theta = \sum_{i=1}^\ell f_i \ddxi{i}$ with $f_i \in S_p$ for each $1 \leq i \leq \ell$.
In this case we write $\deg{\theta} = p$.
We obtain a $\ZZ$-grading $\Der(S) = \bigoplus_{p \in \ZZ} \Der(S)_p$ of the $S$-module $\Der(S)$.

\begin{definition}
For $H \in \Ac$ we fix $\alpha_H \in V^*$ with $H = \ker(\alpha_H)$.
The \emph{module of $\Ac$-derivations} is defined by
\begin{equation*}
  D(\Ac) := \{ \theta \in \Der(S) \mid \theta(\alpha_H) \in \alpha_H S \text{ for all } H \in \Ac\}.
\end{equation*}
In particular, if $\Bc \subseteq \Ac$, then $D(\Ac) \subseteq D(\Bc)$.

We say that $\Ac$ is \emph{free} if the module of $\Ac$-derivations is a free $S$-module. 
\end{definition}

Let $X \in L(\Ac)$ be of rank $q$ and let 
$P_X := \langle\alpha_H \mid H \in \Ac_X\rangle_S$ be the prime ideal generated by the defining linear forms of
the localization $\Ac_X$. After a possible base change we may assume that $X = \ker(x_{\ell-q+1})\cap \ldots \cap \ker(x_{\ell})$.
Hence $P_X = \langle x_{\ell-q+1},\ldots,x_\ell \rangle_S$ and then $S^X := S / P_X \cong \KK[x_1,\ldots,x_{\ell-q}]$.
The module $D(\Ac^X)$ is naturally an $S^X$-submodule of $\Der(S^X)$.
For $\theta \in D(\Ac)$ we have $\theta(P_X) \subseteq P_X$
and hence we get a \emph{restriction map}  
\[
D(\Ac) \to D(\Ac^X), \theta \mapsto \theta^X \quad\text{ by }\quad  
\theta^X(f + P_X) = \theta(f) + P_X. 
\]
The derivation $\theta^X \in D(\Ac^X)$ is 
the \emph{restriction of $\theta$ to $X$}, 
cf.~ \cite[\S~4.3]{OrTer92_Arr}.
Moreover, if $\theta$ is homogeneous of degree $\deg{\theta} = d$ and 
$\theta^X \neq 0$ then $\deg(\theta^X) = d$.

If $\Ac$ is a free arrangement we may choose a homogeneous basis $\theta_1, \ldots, \theta_\ell$ of $D(\Ac)$.
Then the degrees of the $\theta_i$ are called the \emph{exponents} of $\Ac$. 
They are uniquely determined by $\Ac$, \cite[Def.\ 4.25]{OrTer92_Arr}. 
In that case we write 
\[
    \exp(\Ac) := (\deg{\theta_1},\ldots,\deg{\theta_\ell})
\]
for the exponents of $\Ac$.

Note that the empty arrangement $\varnothing_\ell$ in $V$ is free with
$D(\varnothing_\ell) = \Der(S)$ so that $\exp(\varnothing_\ell)=(0,\ldots,0)\in\ZZ^\ell$.

If $\theta_1, \ldots, \theta_\ell \in \Der(S)$ then
\[
    M(\theta_1, \ldots, \theta_\ell) := \left(\theta_j(x_i)\right)_{1\leq i,j \leq \ell}
\]
denotes the \emph{coefficient matrix} of the $\theta_i$, i.e.\ the matrix of coefficients with respect to the standard basis
$\ddxi{1},\ldots,\ddxi{\ell}$ of $\Der(S)$.

Let $\{\alpha_H \mid H \in \Ac \}$ be defining linear forms for the
hyperplanes in $\Ac$. Then
\[
    Q(\Ac) = \prod_{H \in \Ac} \alpha_H \in S
\]
is the \emph{defining polynomial} of $\Ac$.
It is uniquely determined by $\Ac$ up to a non-zero constant factor. 

Next we recall Saito's criterion, cf.~\cite[Thm.~4.19]{OrTer92_Arr}.

\begin{theorem}
	\label{Thm_SaitosCriterion}
    For $\theta_1, \ldots, \theta_\ell \in D(\Ac)$, the following are equivalent:
    \begin{enumerate}[(1)]
        \item 
        $\det(M(\theta_1, \ldots, \theta_\ell)) \in  \KK^\times Q(\Ac)$,
        
        \item
        $\theta_1, \ldots, \theta_\ell$ is a basis of $D(\Ac)$.
    \end{enumerate}
\end{theorem}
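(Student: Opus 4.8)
The plan is to prove the two implications separately, the common tool being the elementary fact that $Q(\Ac)$ always divides the coefficient determinant. First I would establish a \emph{divisibility lemma}: for \emph{any} $\theta_1,\ldots,\theta_\ell\in D(\Ac)$ one has $Q(\Ac)\mid \det M(\theta_1,\ldots,\theta_\ell)$ in $S$. Indeed, fixing $H\in\Ac$ and choosing coordinates so that $\alpha_H=x_\ell$, the defining condition $\theta_j(\alpha_H)\in\alpha_H S$ says exactly that every entry of the bottom row of $M(\theta_1,\ldots,\theta_\ell)$ lies in $x_\ell S$, so $\alpha_H\mid\det M(\theta_1,\ldots,\theta_\ell)$; since $S$ is a UFD and the $\alpha_H$ are pairwise non-proportional, taking the product over all $H\in\Ac$ gives the claim. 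Along the way I record that $Q(\Ac)\Der(S)\subseteq D(\Ac)$ and $\theta_E:=\sum_i x_i\,\ddxi{i}\in D(\Ac)$, so $D(\Ac)$ has rank $\ell$; in particular $\det M(\theta_1,\ldots,\theta_\ell)\ne 0$ whenever the $\theta_i$ form a basis.

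For the implication $(1)\Rightarrow(2)$, assume $\det M(\theta_1,\ldots,\theta_\ell)=c\,Q(\Ac)$ with $c\in\KK^\times$. The $\theta_i$ are then $S$-linearly independent, hence a basis of $\Der(S)\otimes_S\mathrm{Quot}(S)$. Given $\theta\in D(\Ac)$, Cramer's rule yields $\theta=\sum_i(\det M_i/\det M(\theta_1,\ldots,\theta_\ell))\,\theta_i$, where $M_i$ is obtained from $M(\theta_1,\ldots,\theta_\ell)$ by replacing its $i$-th column with the coefficient column of $\theta$, so that $M_i=M(\theta_1,\ldots,\theta,\ldots,\theta_\ell)$ with $\theta$ in position $i$. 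By the divisibility lemma $Q(\Ac)\mid\det M_i$, hence the coefficient $\det M_i/\det M(\theta_1,\ldots,\theta_\ell)$ lies in $S$. Thus $\theta\in\sum_i S\theta_i$, so the $\theta_i$ generate $D(\Ac)$ and, being independent, form a basis.

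The implication $(2)\Rightarrow(1)$ is the part I expect to be the main obstacle, because one must control not merely which irreducible factors occur in $\det M(\theta_1,\ldots,\theta_\ell)$ but also their multiplicities. Let $\theta_1,\ldots,\theta_\ell$ be a basis of $D(\Ac)$; by the lemma $\Delta:=\det M(\theta_1,\ldots,\theta_\ell)$ is a nonzero multiple of $Q(\Ac)$, and it suffices to show that $\Delta$ and $Q(\Ac)$ have, up to a unit, the same irreducible factors with the same multiplicities. I would argue by localizing at height-one primes of $S$. Since localization is exact and $D(\Ac)=\ker\bigl(\Der(S)\to\bigoplus_{H\in\Ac}S/\alpha_H S,\ \theta\mapsto(\theta(\alpha_H)\bmod\alpha_H)_H\bigr)$, for a height-one prime $\mathfrak p$ not of the form $(\alpha_H)$ all summands localize to $0$, so $D(\Ac)_{\mathfrak p}=\Der(S)_{\mathfrak p}$; then $\{\theta_i\}$ localizes to a basis of $\Der(S)_{\mathfrak p}$, forcing $\Delta\in S_{\mathfrak p}^{\times}$, so no such $\mathfrak p$ divides $\Delta$. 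For $\mathfrak p=(\alpha_H)$, only the $H$-summand survives and $D(\Ac)_{\mathfrak p}=D(\{H\})_{\mathfrak p}$; choosing coordinates with $\alpha_H=x_\ell$ one checks directly that $D(\{H\})$ is free with basis $\ddxi{1},\ldots,\ddxi{\ell-1},\,x_\ell\,\ddxi{\ell}$, whose coefficient determinant is $x_\ell$. Comparing this basis with $\{\theta_i\}$ over $S_{\mathfrak p}$ shows $\Delta=x_\ell\cdot(\text{unit of }S_{\mathfrak p})$, so $\alpha_H$ divides $\Delta$ exactly once. Hence every irreducible factor of $\Delta$ is some $\alpha_H$, each with multiplicity one, so $\Delta=c\prod_{H\in\Ac}\alpha_H=c\,Q(\Ac)$ with $c\in\KK^\times$ nonzero, which is $(1)$.
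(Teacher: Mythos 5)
The paper does not prove this statement at all: Theorem \ref{Thm_SaitosCriterion} is quoted verbatim from Orlik--Terao as a known result (their Thm.~4.19), so there is no in-paper argument to compare against. Your proof is correct and complete, and it is essentially the standard proof of Saito's criterion. The divisibility lemma and the Cramer's-rule argument for $(1)\Rightarrow(2)$ are exactly the textbook route. For $(2)\Rightarrow(1)$ you correctly identify the real issue --- bounding the multiplicity of each $\alpha_H$ in $\det M(\theta_1,\ldots,\theta_\ell)$, since the naive relation $Q^\ell=\det M\cdot\det N$ coming from $Q\,\ddxi{i}\in D(\Ac)$ only shows that every irreducible factor is some $\alpha_H$ --- and your localization at height-one primes handles it cleanly: away from the $(\alpha_H)$ the module is all of $\Der(S)_{\mathfrak p}$, and at $(\alpha_H)$ it coincides with $D(\{H\})_{\mathfrak p}$, whose explicit basis has coefficient determinant $\alpha_H$ to the first power. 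This is the algebraic formulation of the classical ``generic point of $H$'' argument. Two very minor points you leave implicit but which are routine: a linear change of coordinates multiplies $\det M$ only by an element of $\KK^\times$, so the per-hyperplane coordinate choices in the divisibility lemma are harmless; and the final step uses that $S$ is a UFD with $S^\times=\KK^\times$, so an element all of whose $\mathfrak p$-valuations vanish is a nonzero constant.
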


The corollary to the following result from \cite[\S 4]{OrTer92_Arr}  provides
a convenient degree criterion to determine when a derivation of a deletion  
$\Ac \setminus \{H\}$ of $\Ac$ does belong to the smaller $S$-module $D(\Ac)$.  

\begin{proposition}[{\cite[Prop.~4.41]{OrTer92_Arr}}]\label{Prop_bPoly}
    Let $H = \ker(\alpha) \in \Ac$, $\Ac' = \Ac \setminus \{H\}$ and $\Ac'' = \Ac^H$.
    Then there is a homogeneous polynomial $b$ in $S$ of degree $|\Ac'|-|\Ac''|$ such that
    \[
        \theta(\alpha) \in \alpha S + bS
    \]
    for all $\theta \in D(\Ac')$.
\end{proposition}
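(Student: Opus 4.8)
The plan is to pass to the restriction ring $S^H = S/\alpha S$ and to exhibit an explicit candidate for $b$. For $Y \in \Ac''$ let $m_Y := |\{K \in \Ac' : Y \subseteq K\}|$; since $Y$ and $K \cap H$ both have codimension two, this is also the size of the fibre over $Y$ of the surjection $\Ac' \to \Ac''$, $K \mapsto K \cap H$, so $\sum_{Y \in \Ac''} m_Y = |\Ac'|$. Choose for each $Y$ a linear form $\beta_Y \in S$ whose image $\bar\beta_Y \in S^H$ is a defining form of the hyperplane $Y$ of $\Ac'' = \Ac^H$, and put
\[
    b := \prod_{Y \in \Ac''} \beta_Y^{\,m_Y - 1} \in S, \qquad \deg b = \sum_{Y \in \Ac''} (m_Y - 1) = |\Ac'| - |\Ac''|.
\]
Because $\alpha$ is a prime element of $S$, the inclusion $\theta(\alpha) \in \alpha S + bS$ is equivalent to the divisibility $\bar b \mid \overline{\theta(\alpha)}$ in the polynomial ring $S^H$. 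As the $\bar\beta_Y$, $Y \in \Ac''$, are pairwise non-proportional linear forms, hence pairwise non-associate primes in the unique factorization domain $S^H$, it suffices to prove, for each fixed $Y \in \Ac''$, that $\bar\beta_Y^{\,m_Y - 1} \mid \overline{\theta(\alpha)}$, equivalently $\theta(\alpha) \in \alpha S + \beta_Y^{\,m_Y - 1}S$, for every $\theta \in D(\Ac')$.

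Fix $Y$ and localize: since $\Ac'_Y := \{K \in \Ac' : Y \subseteq K\} \subseteq \Ac'$ we have $D(\Ac') \subseteq D(\Ac'_Y)$, so it is enough to establish the last membership for all $\theta \in D(\Ac'_Y)$, where $\Ac'_Y$ is the pencil of the $m := m_Y$ hyperplanes of $\Ac'$ through the codimension-two flat $Y$. After a linear change of coordinates we may assume $\alpha = x_\ell$ and $Y = \ker x_{\ell-1} \cap \ker x_\ell$; then each $K \in \Ac'_Y$ has a defining form in $\langle x_{\ell-1}, x_\ell\rangle$ not proportional to $x_\ell$, which we normalize to $\alpha_K = x_{\ell-1} - c_K x_\ell$ with the $c_K \in \KK$ pairwise distinct, and $\bar\beta_Y = x_{\ell-1}$. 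Writing $\partial_i := \ddxi{i}$, a derivation $\theta = \sum_i f_i\partial_i$ lies in $D(\Ac'_Y)$ precisely when the pair $(f_{\ell-1}, f_\ell)$ of its $\partial_{\ell-1},\partial_\ell$-coefficients satisfies $f_{\ell-1} - c_K f_\ell \in \alpha_K S$ for every $K \in \Ac'_Y$, the remaining coefficients being unconstrained. Over $T := \KK[x_{\ell-1}, x_\ell]$, with $g := \prod_{K \in \Ac'_Y} \alpha_K$, the rank-two derivation module of the central line arrangement $\{\ker \alpha_K : K \in \Ac'_Y\}$ in $\KK^2$ is free with basis the Euler derivation $\theta_E = x_{\ell-1}\partial_{\ell-1} + x_\ell \partial_\ell$ and $\eta := (\partial_\ell g)\,\partial_{\ell-1} - (\partial_{\ell-1} g)\,\partial_\ell$ of degree $m-1$; indeed $\eta$ is immediately seen to be tangent to each $\ker \alpha_K$, and $\det M(\theta_E, \eta) = -m\,g$ up to a nonzero scalar, so Saito's criterion (Theorem \ref{Thm_SaitosCriterion}) applies. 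Since $S$ is free, hence flat, over $T$, the $S$-module of pairs $(f_{\ell-1}, f_\ell)$ satisfying the above conditions is the base change to $S$ of the analogous $T$-module (the kernel of an explicit $T$-linear map), so it is again generated by the coefficient pairs of $\theta_E$ and $\eta$; hence $f_{\ell-1}\partial_{\ell-1} + f_\ell \partial_\ell = h_1 \theta_E + h_2 \eta$ for some $h_1, h_2 \in S$, and therefore $\theta(\alpha) = \theta(x_\ell) = h_1 x_\ell - h_2\, \partial_{\ell-1} g$. Reducing modulo $\alpha = x_\ell$ sends $g$ to $x_{\ell-1}^{\,m}$, so $\partial_{\ell-1} g \equiv m\, x_{\ell-1}^{\,m-1}$, whence $\theta(\alpha) \in \alpha S + x_{\ell-1}^{\,m-1} S = \alpha S + \beta_Y^{\,m_Y - 1} S$. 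Letting $Y$ range over $\Ac''$ and invoking the unique factorization argument above yields $\theta(\alpha) \in \alpha S + bS$ for all $\theta \in D(\Ac')$, completing the proof.

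The only genuinely delicate point is the local step. The hyperplanes of the pencil $\Ac'_Y$ all contain $Y$, so their defining ideals are not pairwise comaximal and a direct Chinese-remainder argument is unavailable; the substitute is the explicit free basis $\{\theta_E, \eta\}$ of the rank-two pencil together with flat base change from $T$ to $S$, which determines $\theta(\alpha)$ tightly enough that the required divisibility can be read off after reduction modulo $\alpha$. The remaining ingredients — the count $\sum_Y m_Y = |\Ac'|$, the tangency of $\eta$, and the evaluation of $\det M(\theta_E, \eta)$ via Euler's identity — are routine verifications.
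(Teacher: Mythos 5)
Your argument is correct: the definition $b=\prod_Y\beta_Y^{\,m_Y-1}$, the reduction modulo $\alpha$ and the prime factorization in $S^H$, the localization at each codimension-two flat $Y$, and the explicit basis $\{\theta_E,\eta\}$ of the pencil (justified by Euler's identity and Saito's criterion, with flat base change from $T$ to $S$) all check out, and the characteristic-zero hypothesis on $\KK$ covers the needed nonvanishing of $m$. The paper itself gives no proof --- it quotes this as \cite[Prop.~4.41]{OrTer92_Arr} --- and your proof is essentially the standard one from that source (reduce to the rank-two localizations and use the explicit derivation module of a pencil), so there is nothing to add.
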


\begin{corollary}\label{Coro_degTheta}
	With the notation as in Proposition \ref{Prop_bPoly},
    if $\theta \in D(\Ac')$ with $\deg(\theta) < \deg(b) = |\Ac'|-|\Ac''|$
    then $\theta \in D(\Ac)$.
\end{corollary}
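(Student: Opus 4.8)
The plan is to deduce the corollary directly from Proposition \ref{Prop_bPoly}, using only the $\ZZ$-grading on $\Der(S)$ together with the fact that $D(\Ac)$ is a graded $S$-submodule. First I would reduce to the case that $\theta$ is homogeneous. Since $b$ is homogeneous and $\alpha = \alpha_H$ is linear, the ideal $\alpha S + bS$ of $S$ is homogeneous, with degree-$p$ graded piece
\[
    (\alpha S + bS)_p = \alpha S_{p-1} + b\, S_{p - \deg(b)}.
\]
Moreover, a derivation lies in $D(\Ac)$ if and only if each of its homogeneous components does, and the homogeneous components of a $\theta \in D(\Ac')$ again lie in $D(\Ac')$ and have degree at most $\deg(\theta) < \deg(b)$. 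Hence it suffices to prove the claim for a homogeneous $\theta \in D(\Ac')$ of degree $d < \deg(b)$.

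For such a $\theta$, Proposition \ref{Prop_bPoly} gives $\theta(\alpha) \in \alpha S + bS$. Because $\theta$ is homogeneous of degree $d$ and $\alpha$ is linear, $\theta(\alpha)$ is homogeneous of degree $d$; therefore $\theta(\alpha) \in (\alpha S + bS)_d = \alpha S_{d-1} + b\, S_{d - \deg(b)}$. Now $d - \deg(b) < 0$ forces $S_{d - \deg(b)} = \{0\}$, so in fact $\theta(\alpha) \in \alpha S_{d-1} \subseteq \alpha S$. Finally, $\Ac = \Ac' \cup \{H\}$ with $H = \ker(\alpha)$, and $\theta \in D(\Ac')$ already provides $\theta(\alpha_{H'}) \in \alpha_{H'} S$ for every $H' \in \Ac'$; together with $\theta(\alpha_H) = \theta(\alpha) \in \alpha_H S$ this is precisely the defining condition for membership in $D(\Ac)$, so $\theta \in D(\Ac)$.

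There is no real obstacle here: this is a short bookkeeping argument once Proposition \ref{Prop_bPoly} is available. The only points that require (mild) attention are the reduction to homogeneous derivations and the observation that, in the graded piece $(\alpha S + bS)_d$, the contribution coming from $b$ vanishes as soon as $d < \deg(b)$ — which is exactly where the degree hypothesis is used.
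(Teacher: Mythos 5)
Your argument is correct and is precisely the degree argument the paper intends (the corollary is stated without proof as an immediate consequence of Proposition \ref{Prop_bPoly}): since $\theta(\alpha)$ is homogeneous of degree $\deg(\theta) < \deg(b)$, the $bS$-contribution to the graded piece $(\alpha S + bS)_{\deg(\theta)}$ vanishes, forcing $\theta(\alpha) \in \alpha S$ and hence $\theta \in D(\Ac)$. The reduction to homogeneous $\theta$ is harmless but not really needed, as the paper's convention only defines $\deg(\theta)$ for homogeneous derivations.
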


We require a construction from \cite[\S 4]{OrTer92_Arr} that allows us to 
extend $S$-independent members from $D(\Ac)$ to a basis of $D(\Ac)$ in case $\Ac$ is free.
Here and later on we write $(e_1,\ldots,e_\ell)_\leq$ for 
$(e_1 \leq e_2 \leq \ldots \leq e_\ell)$.
 
\begin{proposition}[{\cite[Thm.~4.42]{OrTer92_Arr}}]\label{Prop_ContinueBasisD}
	Let $\Ac$ be a free arrangement with $\exp(\Ac) = (e_1,\ldots,e_\ell)_\leq$, 
	and let $\theta_1,\ldots,\theta_k \in D(\Ac)$ be homogeneous elements such that
	\begin{enumerate}[(1)]
		\item $\deg(\theta_i) = e_i$,
		\item $\theta_i \notin S\theta_1 + \ldots + S\theta_{i-1}$.
	\end{enumerate}
	Then $\theta_1,\ldots,\theta_k$ may be extended to a basis of $D(\Ac)$. 
\end{proposition}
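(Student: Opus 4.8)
The plan is to reduce the statement to the case $k = \ell$ by a downward induction that augments the given list $\theta_1, \ldots, \theta_k$ one derivation at a time, choosing the new derivation in degree $e_{k+1}$. To make this work I would first record the standard consequence of Saito's criterion (Theorem \ref{Thm_SaitosCriterion}) for free arrangements: if $\eta_1, \ldots, \eta_\ell$ is any homogeneous basis of $D(\Ac)$ and $\phi_1, \ldots, \phi_\ell \in D(\Ac)$ are homogeneous with $\deg(\phi_j) = \deg(\eta_j) = e_j$ for all $j$, then $\phi_1, \ldots, \phi_\ell$ is again a basis iff the change-of-basis matrix $C$ with $\phi_j = \sum_i C_{ij}\eta_i$ has $\det C \in \KK^\times$; indeed $\det M(\phi_1,\ldots,\phi_\ell) = \det(C)\cdot\det M(\eta_1,\ldots,\eta_\ell)$, and for this comparison to land in $\KK^\times Q(\Ac)$ with both sides of the same polynomial degree $\sum e_j = |\Ac|$ we need $\det C$ to be a nonzero scalar. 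The key point extracted from this is: a homogeneous $\phi \in D(\Ac)$ of degree $e_j$ can be written as an $S$-combination of the basis elements $\eta_i$, and by comparing degrees only those $\eta_i$ with $e_i \leq e_j$ appear, with scalar coefficients on those $\eta_i$ for which $e_i = e_j$.

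Now fix a homogeneous basis $\eta_1, \ldots, \eta_\ell$ of $D(\Ac)$ with $\deg(\eta_i) = e_i$. The inductive step: suppose $\theta_1, \ldots, \theta_k$ satisfy (1) and (2); I want to produce $\theta_{k+1} \in D(\Ac)$ homogeneous of degree $e_{k+1}$ with $\theta_{k+1} \notin S\theta_1 + \cdots + S\theta_k$, so that the extended list still satisfies (1)–(2) and the induction continues until $k = \ell$, at which point condition (2) at every stage forces $S$-independence of a full set of $\ell$ homogeneous derivations in the correct degrees, hence — by the Saito/determinant argument above — a basis; truncating back gives the claim. To find $\theta_{k+1}$: consider the $S$-submodule $N = S\theta_1 + \cdots + S\theta_k \subseteq D(\Ac)$. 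Write each $\theta_i$ in terms of the $\eta$'s; by the degree observation, $\theta_i \in \sum_{e_j \le e_i} S\eta_j$, and the $\eta_j$-components of $\theta_1, \ldots, \theta_k$ for the indices $j$ with $e_j \le e_k$ form, after the usual argument, an "upper-triangular up to scalars" block, so $N$ already contains (scalar multiples mod lower terms of) $\eta_1, \ldots, \eta_k$ — more precisely $D(\Ac) / N$ in degree $e_{k+1}$ is nonzero because $\eta_{k+1}$ cannot lie in $N$ (a membership $\eta_{k+1} = \sum_{i=1}^k f_i \theta_i$ would, upon substituting and comparing the $\eta_{k+1}$-coefficient, force $1 \in \sum_i (\text{coeff})S$ with all coefficients of positive degree when $e_i < e_{k+1}$, impossible, while the $e_i = e_{k+1}$ case is handled by noting $\theta_1,\ldots,\theta_k$ only involve $\eta_j$ with $j \le k$). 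So I may take $\theta_{k+1} := \eta_{k+1}$ itself — or more robustly, take any homogeneous element of degree $e_{k+1}$ in $D(\Ac) \setminus N$, which exists since $\eta_{k+1}$ is one.

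The step I expect to be the main obstacle is the bookkeeping in the previous paragraph: cleanly proving that $\eta_{k+1} \notin S\theta_1 + \cdots + S\theta_k$ given only conditions (1) and (2) on the $\theta_i$. Condition (2) is a statement about the $\theta_i$ among themselves, not about their relation to the chosen basis $\{\eta_i\}$, so the argument has to pass through the coefficient matrix: the $\ell \times k$ matrix expressing $(\theta_1,\ldots,\theta_k)$ in the basis $(\eta_1,\ldots,\eta_\ell)$ is graded, and the sub-block on rows $\{j : e_j \le e_k\}$ must have rank $k$ (else (2) fails), which — combined with the grading and the fact that within a fixed degree the entries are scalars — lets one perform scalar column operations to arrange that $\theta_i$ has a nonzero scalar $\eta_i$-coefficient and zero $\eta_{i'}$-coefficient for $i' < i$ in the same degree. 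Once this normal form is in hand, a membership relation $\eta_{k+1} = \sum f_i \theta_i$ is immediately contradicted by looking at degree $e_{k+1}$ and the $\eta_{k+1}$-row. I would also remark that if one is content to cite it, this proposition is precisely \cite[Thm.~4.42]{OrTer92_Arr}, so the proof here is really a recollection of that argument in the notation of the present paper.
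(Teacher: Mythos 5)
The paper offers no proof of this proposition at all --- it is quoted as a preliminary directly from \cite[Thm.~4.42]{OrTer92_Arr} --- so there is no in-paper argument to compare against. Your outline (adjoin one homogeneous derivation of degree $e_{k+1}$ at a time, then conclude via the scalar change-of-basis determinant and Saito's criterion) is indeed the standard route to this result, but two of its steps have genuine gaps as written.

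First, the assertion that $\eta_{k+1}\notin N=S\theta_1+\cdots+S\theta_k$, justified by the claim that ``$\theta_1,\ldots,\theta_k$ only involve $\eta_j$ with $j\le k$'', is false: hypotheses (1) and (2) say nothing about how the $\theta_i$ sit relative to the chosen basis. Already for $\exp(\Ac)=(1,1)$ with basis $\eta_1,\eta_2$ and $k=1$, the choice $\theta_1=\eta_2$ satisfies (1) and (2), yet $\eta_{k+1}=\eta_2\in N$. What is true, and suffices, is that \emph{some} homogeneous derivation of degree $d=e_{k+1}$ lies outside $N$: modulo $M:=\sum_{e_i<d}S\eta_i$, the degree-$d$ part of $D(\Ac)$ has $\KK$-dimension $|\{j\mid e_j=d\}|$, whereas the image of $N_d$ is spanned by the classes of those $\theta_i$ with $e_i=d$ and $i\le k$ (any summand $f_i\theta_i$ with $\deg\theta_i<d$ lands in $M$), and there are at most $|\{j\mid e_j=d\}|-1$ of these because $e_{k+1}=d$. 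Hence some $\eta_j$ with $e_j=d$ avoids $N$ --- just not necessarily $j=k+1$.

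Second, and more seriously, the terminal claim that $\ell$ homogeneous derivations with $\deg\theta_i=e_i$ satisfying (2) form a basis is the real content of the proposition, and you explicitly defer it as ``bookkeeping''. The coefficient matrix $C$ with $\theta_j=\sum_i C_{ij}\eta_i$ is block upper triangular with scalar diagonal blocks, so $\det C\in\KK$; the point is to show $\det C\neq 0$, i.e.\ that each diagonal block is invertible, using only condition (2), and this does not follow from a generic rank count. The clean argument is an induction over the distinct exponent values: once the blocks in degrees $<d$ are known to be invertible, the $\theta_j$ with $e_j<d$ generate the same $S$-submodule as the $\eta_j$ with $e_j<d$; a singular block in degree $d$ would produce scalars $\lambda_j$, not all zero, with $\sum_{e_j=d}\lambda_j\theta_j$ lying in that submodule, and solving for $\theta_{j_0}$ with $j_0$ maximal among $\{j\mid\lambda_j\neq0\}$ contradicts (2). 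With these two repairs your argument is complete and coincides with the proof in \cite{OrTer92_Arr}, which the paper simply cites.
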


We finally recall the restriction part of Terao's seminal Addition-Deletion Theorem \cite{Terao1980_FreeI}.

\begin{proposition}[{\cite[Cor.~4.47]{OrTer92_Arr}}]
    \label{Prop_AddDelRestr}
    Let $H \in \Ac$ and $\Ac' = \Ac \setminus \{H\}$ such that both $\Ac$ and $\Ac'$ are free. Then $\Ac^H$ is
    free and we have
    \begin{align*}
        \exp(\Ac) & = (e_1,\ldots,e_{\ell-1},e_\ell), \\
        \exp(\Ac')& = (e_1,\ldots,e_{\ell-1},e_\ell-1), \\
        \exp(\Ac^H) & = (e_1,\ldots,e_{\ell-1}).        
    \end{align*}
\end{proposition}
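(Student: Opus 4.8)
\emph{Strategy.} The plan is to extract the freeness of $\Ac^H$ and the exponent identities simultaneously from the two short exact sequences linking $D(\Ac)$, $D(\Ac'):=D(\Ac\setminus\{H\})$ and $D(\Ac^H)$, and then to apply Saito's criterion (Theorem~\ref{Thm_SaitosCriterion}) to $\Ac^H$. Write $\alpha:=\alpha_H$, so that $P_H=\langle\alpha\rangle$ and $S^H:=S/\langle\alpha\rangle$ is a polynomial ring in $\ell-1$ variables; denote $\exp(\Ac)=(e_i)$, $\exp(\Ac')=(d_i)$, and recall $D(\Ac)\subseteq D(\Ac')$ and $\sum_ie_i=|\Ac|$, $\sum_id_i=|\Ac'|$.

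\emph{Step 1 (exponents of $\Ac$ versus $\Ac'$).} First I would observe that $D(\Ac)$ is the kernel of the degree-preserving $S$-linear map $\psi\colon D(\Ac')\to S^H$, $\theta\mapsto\overline{\theta(\alpha)}$, since for $\theta\in D(\Ac')$ one has $\theta\in D(\Ac)$ iff $\theta(\alpha)\in\alpha S$. Let $I:=\operatorname{im}\psi\subseteq S^H$, an ideal, nonzero because $Q(\Ac)=\alpha Q(\Ac')$ forces $D(\Ac)\subsetneq D(\Ac')$. As $\Ac$ and $\Ac'$ are free, $0\to D(\Ac)\to D(\Ac')\xrightarrow{\psi}I\to0$ is a length-one free resolution of $I$ over $S$; since $I$ is annihilated by $\alpha$, the standard change-of-rings fact gives $\operatorname{pd}_{S^H}I=\operatorname{pd}_SI-1=0$, so $I$ is a graded projective, hence free, hence principal ideal of $S^H$, say $I=(c)$ with $\deg c=\delta$. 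Comparing Hilbert series along the sequence yields $\sum_it^{e_i}-\sum_it^{d_i}=t^{\delta+1}-t^{\delta}$; that is, as multisets $\exp(\Ac)$ arises from $\exp(\Ac')$ by raising one copy of $\delta$ to $\delta+1$. Put $m:=\delta+1$ and relabel $\exp(\Ac)=(e_1,\dots,e_{\ell-1},e_\ell)$ with $e_\ell=m$, so $\exp(\Ac')=(e_1,\dots,e_{\ell-1},e_\ell-1)$. Finally, Proposition~\ref{Prop_bPoly} shows $I\subseteq\langle\bar b\rangle$ with $\deg b=|\Ac'|-|\Ac^H|$, hence $\bar b\mid c$ and $m\geq|\Ac|-|\Ac^H|$.

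\emph{Step 2 (freeness of $\Ac^H$).} A short coprimality argument (using that $\alpha$ and $\alpha_K$ are coprime for $K\in\Ac'$) identifies the kernel of the restriction map $D(\Ac)\to D(\Ac^H)$, $\eta\mapsto\eta^H$, with $D(\Ac)\cap\alpha\Der(S)=\alpha D(\Ac')$, and one checks $\alpha D(\Ac')\subseteq D(\Ac)$. Hence its image $N\subseteq D(\Ac^H)$ is isomorphic to $D(\Ac)/\alpha D(\Ac')$, and $0\to\alpha D(\Ac')\to D(\Ac)\to N\to0$ is again a length-one free $S$-resolution of a module killed by $\alpha$; by change of rings $N$ is free over $S^H$, and a Hilbert-series computation inserting Step~1 gives $N\cong\bigoplus_{i<\ell}S^H(-e_i)$. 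Now I would pick a homogeneous $S^H$-basis $\delta_1,\dots,\delta_{\ell-1}$ of $N$: these lie in $D(\Ac^H)$, are $\operatorname{Frac}(S^H)$-independent, so $\det M(\delta_1,\dots,\delta_{\ell-1})\neq0$; it is divisible by $Q(\Ac^H)$ (for any tuple of derivations in $D(\Ac^H)$ each defining form divides the coefficient determinant), and it has degree $\sum_{i<\ell}e_i=|\Ac|-m\leq|\Ac^H|=\deg Q(\Ac^H)$ by Step~1. The divisibility forces the reverse inequality, so $\det M(\delta_1,\dots,\delta_{\ell-1})\in\KK^\times Q(\Ac^H)$, and Saito's criterion applied to $\Ac^H$ shows $\delta_1,\dots,\delta_{\ell-1}$ is a basis of $D(\Ac^H)$. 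Thus $\Ac^H$ is free, $D(\Ac^H)=N$, and $\exp(\Ac^H)=(e_1,\dots,e_{\ell-1})$, which with Step~1 gives the three displayed identities. (The cases $\ell=1$ and $\Ac^H=\varnothing$ are trivial.)

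\emph{The main obstacle.} The one genuinely non-formal ingredient, needed in both steps, is the passage from an $S$-free presentation to freeness over the quotient ring $S^H$: we only have free $S$-modules, whereas the relevant quotients $I$ and $N$ are mere $S^H$-modules, and it is precisely the homological change-of-rings identity — an $S/\alpha S$-module of finite projective dimension over $S$ has projective dimension one smaller over $S/\alpha S$ — that upgrades ``the restriction map has full rank'' to ``its image is a free module with the predicted generator degrees'', which is exactly what makes Saito's criterion bite. Everything else (the two kernel identifications, the coprimality remarks, the bound from the $b$-polynomial, and the Hilbert-series bookkeeping) is routine.
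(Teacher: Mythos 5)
Your proof is correct, but note that the paper itself gives no argument for Proposition~\ref{Prop_AddDelRestr} --- it is quoted as \cite[Cor.~4.47]{OrTer92_Arr} --- so the relevant comparison is with Orlik--Terao's classical proof. That proof works by explicit basis surgery: using the polynomial $b$ of Proposition~\ref{Prop_bPoly} one modifies a homogeneous basis of $D(\Ac')$ into one of the form $\theta_1,\dots,\theta_{\ell-1},\theta_\ell$ with $\theta_1,\dots,\theta_{\ell-1},\alpha_H\theta_\ell$ a basis of $D(\Ac)$ (this already yields the relation between $\exp(\Ac)$ and $\exp(\Ac')$), and then verifies Saito's criterion for $\theta_1^H,\dots,\theta_{\ell-1}^H$ --- the same pattern the present paper reuses in Lemma~\ref{Lem_BasisMATStep} and Corollary~\ref{Coro_TwoMATsteps}. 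You instead extract both the exponent comparison and the $\ell-1$ restricted derivations homologically: the modules $I=\operatorname{im}\psi$ and $N=D(\Ac)/\alpha D(\Ac')$ are killed by $\alpha$ and admit length-one free $S$-resolutions, hence are free over $S^H$ by Auslander--Buchsbaum/change of rings, and their Hilbert series force $I\cong S^H(-\delta)$ and $N\cong\bigoplus_{i<\ell}S^H(-e_i)$; the $b$-polynomial enters only to give $e_\ell\ge|\Ac|-|\Ac^H|$, which is exactly what closes the final degree count in Saito's criterion. The two kernel identifications ($\ker\psi=D(\Ac)$ and $\ker(\,\cdot\,^H)\cap D(\Ac)=\alpha D(\Ac')$ via coprimality), the Hilbert-series bookkeeping, and the endgame are all sound. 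If you write this up, make two small points explicit: $\operatorname{pd}_SI=\operatorname{pd}_SN=1$ exactly (a nonzero module annihilated by $\alpha$ cannot be $S$-free, so the change-of-rings step really does land in projective dimension zero), and $\bar b\neq0$ (otherwise $D(\Ac)=D(\Ac')$, contradicting Saito's criterion since $Q(\Ac)\neq Q(\Ac')$ up to scalar), which is needed for the inequality $\delta\ge\deg b$. What your route buys is a replacement of the somewhat delicate basis manipulation by standard commutative algebra; what it costs is the reliance on depth/change-of-rings machinery that the elementary proof avoids.
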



\subsection{Topological incarnation of the module of $\Ac$-derivations}
\label{SSec_tangent}

The \emph{complement} of $\Ac =(\Ac,V)$ is denoted by
\[ 
M(\Ac) := V \setminus \bigcup_{H \in \Ac} H.
\]
For $z \in V$ let $T_{V,z}$ be the \emph{tangent space} of $V$ at $z$;
it is a $\KK$-vector space with basis $\ddxi{1},\ldots,\ddxi{\ell}$.
The $\KK$-linear \emph{evaluation map} 
\[
\rho_z:D(\Ac) \to T_{V,z}
\]
is defined as follows, cf.~\cite[\S 5.1]{OrTer92_Arr}.
For $\theta = \sum_{i=1}^\ell f_i \ddxi{i} \in D(\Ac)$, let 
\[ \rho_z(\theta) := \sum_{i=1}^\ell f_i(z) \ddxi{i}. \]

\begin{proposition}[{\cite[Prop.~5.17]{OrTer92_Arr}}]
	\label{Prop_TangentSpace}
For $z \in M(\Ac^X)$, we have $\rho_z(D(\Ac)) = T_{X,z}$ 
which is isomorphic to $X$ as a $\KK$-vector space.
\end{proposition}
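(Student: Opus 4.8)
\emph{Proof proposal.} The plan is to establish the two inclusions $\rho_z(D(\Ac)) \subseteq T_{X,z}$ and $T_{X,z} \subseteq \rho_z(D(\Ac))$ separately, after disposing of the identifications. Since $X \in L(\Ac)$ is a linear subspace of $V$, its tangent space $T_{X,z}$ at $z$ is canonically a subspace of $T_{V,z} \cong V$, and under this identification $T_{X,z} = X$; in particular $T_{X,z} \cong X$ as $\KK$-vector spaces, so that part of the claim is immediate. I will use two elementary observations. First, for every $\theta = \sum_{i=1}^\ell f_i \ddxi{i} \in \Der(S)$ and every linear form $\alpha = \sum_{i=1}^\ell c_i x_i \in V^*$ one has $\theta(\alpha)(z) = \sum_{i=1}^\ell f_i(z)c_i = \alpha(\rho_z(\theta))$, where on the right $\alpha$ is evaluated on the vector $\rho_z(\theta) \in T_{V,z} \cong V$. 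Second, because $z \in M(\Ac^X)$ we have $z \in X$ while $z \notin K$ for every $K \in \Ac \setminus \Ac_X$; hence the set of hyperplanes of $\Ac$ passing through $z$ is precisely $\Ac_X$, and moreover $X = \bigcap_{H \in \Ac_X} \ker \alpha_H$ (using that $X$ is an intersection of hyperplanes from $\Ac$, each of which then belongs to $\Ac_X$).

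For the inclusion $\rho_z(D(\Ac)) \subseteq T_{X,z}$: let $\theta \in D(\Ac)$ and $H \in \Ac_X$. Then $\theta(\alpha_H) = \alpha_H g$ for some $g \in S$, so $\theta(\alpha_H)(z) = \alpha_H(z)g(z) = 0$ because $z \in X \subseteq H$. By the first observation $\alpha_H(\rho_z(\theta)) = 0$ for every $H \in \Ac_X$, whence $\rho_z(\theta) \in \bigcap_{H \in \Ac_X}\ker\alpha_H = X = T_{X,z}$.

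The reverse inclusion $T_{X,z} \subseteq \rho_z(D(\Ac))$ is the heart of the matter; since $\rho_z$ is $\KK$-linear and $D(\Ac)$ is a $\KK$-vector space, $\rho_z(D(\Ac))$ is a $\KK$-subspace of $T_{V,z}$, so it suffices to produce, for each $v \in X$, some $\theta \in D(\Ac)$ with $\rho_z(\theta)$ a nonzero scalar multiple of $v$. Writing $v = (v_1, \ldots, v_\ell)$ in the basis of $V$ dual to $x_1, \ldots, x_\ell$, I propose
\[
\theta_v \;:=\; \Bigl(\,\prod_{K \in \Ac \setminus \Ac_X} \alpha_K \Bigr)\cdot \partial_v,
\qquad \text{where}\quad \partial_v := \sum_{i=1}^\ell v_i\, \ddxi{i}.
\]
Then $\partial_v(\alpha) = \alpha(v)$ for every linear form $\alpha$, so: for $H \in \Ac_X$ we get $\theta_v(\alpha_H) = \bigl(\prod_K \alpha_K\bigr)\alpha_H(v) = 0$ since $v \in X \subseteq H$; and for $K_0 \in \Ac \setminus \Ac_X$ we get $\theta_v(\alpha_{K_0}) = \bigl(\prod_K \alpha_K\bigr)\alpha_{K_0}(v)$, which is divisible by $\alpha_{K_0}$ as $\alpha_{K_0}$ is one of the factors of the product. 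Hence $\theta_v \in D(\Ac)$. Finally $\rho_z(\theta_v) = \bigl(\prod_{K \in \Ac\setminus\Ac_X}\alpha_K(z)\bigr)\,v = c\,v$ with $c \neq 0$ because $z \notin K$ for all $K \in \Ac \setminus \Ac_X$, so $v = c^{-1}\rho_z(\theta_v) \in \rho_z(D(\Ac))$. Together with the previous paragraph this gives $\rho_z(D(\Ac)) = T_{X,z} \cong X$.

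The argument is almost entirely formal and uses neither freeness of $\Ac$ nor the earlier structural results; the step I would be most careful with is the bookkeeping in the preliminary paragraph — keeping straight that $\alpha_H(\rho_z(\theta))$ means the linear form $\alpha_H$ applied to the tangent vector $\rho_z(\theta)$ under $T_{V,z}\cong V$, that the identity $X = \bigcap_{H \in \Ac_X}\ker\alpha_H$ genuinely uses $X \in L(\Ac)$, and that $z \in M(\Ac^X)$ forces the set of hyperplanes of $\Ac$ through $z$ to be exactly $\Ac_X$ and no larger. Once these are in place, the only idea needed is the guess $\theta_v$ above, whose membership in $D(\Ac)$ is verified hyperplane by hyperplane as indicated.
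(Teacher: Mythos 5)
Your proof is correct, and it is essentially the standard argument for this fact (the paper itself only cites \cite[Prop.~5.17]{OrTer92_Arr}, where the reverse inclusion is likewise obtained from derivations of the form $\bigl(\prod_{K \in \Ac\setminus\Ac_X}\alpha_K\bigr)\partial_v$ with $v \in X$, together with the observation that this product does not vanish at a point of $M(\Ac^X)$). No gaps: the identification $T_{X,z}=X=\bigcap_{H\in\Ac_X}\ker\alpha_H$, the compatibility $\theta(\alpha)(z)=\alpha(\rho_z(\theta))$, and the hyperplane-by-hyperplane verification that $\theta_v\in D(\Ac)$ are all exactly what is needed, and the statement indeed requires no freeness hypothesis.
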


\begin{corollary}
	\label{Coro_TangentSpace}
    If $\Ac$ is free, $\theta_1,\ldots,\theta_\ell$ is a basis of $D(\Ac)$,
    and $z \in M(\Ac)$, then 
    \[
        \rho_z(D(\Ac)) = T_{V,z} = \langle \rho_z(\theta_1),\ldots,\rho_z(\theta_\ell) \rangle_\KK,
    \]
    i.e.\ $\rho_z(\theta_1),\ldots,\rho_z(\theta_\ell)$ is a $\KK$-vector space
    basis of $T_{V,z}$.
\end{corollary}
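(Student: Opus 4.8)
The plan is to deduce the statement directly from Saito's criterion (Theorem \ref{Thm_SaitosCriterion}) together with the definition of the complement $M(\Ac)$; alternatively one may invoke Proposition \ref{Prop_TangentSpace} applied to the top element $X = V$ of $L(\Ac)$, for which $\Ac_V = \varnothing$, hence $\Ac^V = \Ac$ and $M(\Ac^V) = M(\Ac)$, so that the proposition already gives $\rho_z(D(\Ac)) = T_{V,z}$. I will spell out the first route since it is self-contained.

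First I would observe that, since $\theta_1,\ldots,\theta_\ell$ is a basis of $D(\Ac)$, Saito's criterion yields $\det M(\theta_1,\ldots,\theta_\ell) = c\, Q(\Ac)$ for some $c \in \KK^\times$. Evaluating the entries at $z$, the matrix $\bigl(\theta_j(x_i)(z)\bigr)_{1\leq i,j\leq\ell}$ has determinant $c\, Q(\Ac)(z)$. As $z \in M(\Ac) = V\setminus\bigcup_{H\in\Ac}H$, no defining form $\alpha_H$ vanishes at $z$, so $Q(\Ac)(z) = \prod_{H\in\Ac}\alpha_H(z)\neq 0$. Thus this evaluated coefficient matrix is invertible; since its columns are precisely the coordinate vectors of $\rho_z(\theta_1),\ldots,\rho_z(\theta_\ell)$ with respect to the basis $\ddxi{1},\ldots,\ddxi{\ell}$ of $T_{V,z}$, these $\ell$ vectors form a $\KK$-basis of $T_{V,z}$.

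Next I would check the span description. The inclusion $\langle\rho_z(\theta_1),\ldots,\rho_z(\theta_\ell)\rangle_\KK \subseteq \rho_z(D(\Ac))$ is immediate. For the reverse inclusion, take $\theta\in D(\Ac)$ and write $\theta = \sum_{i=1}^\ell f_i\theta_i$ with $f_i\in S$; expanding both sides in the standard basis of $\Der(S)$ shows $\rho_z(\theta) = \sum_{i=1}^\ell f_i(z)\,\rho_z(\theta_i)$, so $\rho_z(\theta)$ lies in the $\KK$-span of the $\rho_z(\theta_i)$. Combining this with the previous step gives $\rho_z(D(\Ac)) = \langle\rho_z(\theta_1),\ldots,\rho_z(\theta_\ell)\rangle_\KK = T_{V,z}$, the last equality because an $\ell$-element spanning set of the $\ell$-dimensional space $T_{V,z}$ is a basis.

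There is essentially no obstacle here: the only point deserving a line of care is the identity $\rho_z(\sum f_i\theta_i) = \sum f_i(z)\rho_z(\theta_i)$, i.e.\ that under the evaluation map an $S$-linear combination of derivations passes to the $\KK$-linear combination of their evaluations with coefficients evaluated at $z$. Everything else follows at once from the freeness of $\Ac$, the non-vanishing of $Q(\Ac)$ on $M(\Ac)$, and a dimension count in $T_{V,z}$.
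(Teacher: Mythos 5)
Your proof is correct, but your main route differs from the paper's. The paper does exactly what you sketch as your \emph{alternative}: it invokes Proposition \ref{Prop_TangentSpace} (with $X=V$, so $\Ac^X=\Ac$ and $M(\Ac^X)=M(\Ac)$) to get surjectivity $\rho_z(D(\Ac))=T_{V,z}$, then uses the evaluation identity $\rho_z(\sum f_i\theta_i)=\sum f_i(z)\rho_z(\theta_i)$ to conclude that the $\rho_z(\theta_i)$ span $T_{V,z}$, whence they are a basis by counting. Your primary argument instead goes through Saito's criterion (Theorem \ref{Thm_SaitosCriterion}): the coefficient matrix of a basis has determinant $cQ(\Ac)$ with $c\in\KK^\times$, and $Q(\Ac)(z)\neq 0$ for $z\in M(\Ac)$, so the evaluated matrix is invertible and the $\rho_z(\theta_i)$ are linearly independent; the span computation then recovers $\rho_z(D(\Ac))=T_{V,z}$ as a \emph{conclusion} rather than an input. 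The trade-off: the paper's proof is shorter because it leans on the quoted Proposition \ref{Prop_TangentSpace} from Orlik--Terao, while yours is self-contained modulo Saito's criterion, establishes linear independence directly rather than by a dimension count from spanning, and does not need the surjectivity of $\rho_z$ as a black box. Both arguments are complete; the one point you rightly flag, the identity $\rho_z(\sum f_i\theta_i)=\sum f_i(z)\rho_z(\theta_i)$, is exactly the observation the paper opens its proof with.
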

\begin{proof} 
    Note that for $\theta = \sum_{i=1}^\ell f_i \theta_i$
    we have $\rho_z(\theta) =$ $\sum_{i=1}^\ell f_i(z) \rho_z(\theta_i)$.
    By Proposition \ref{Prop_TangentSpace}, the $\KK$-linear map $\rho_z$ is onto $T_{V,z}$
    and since $\theta_1,\ldots,\theta_\ell$ is a basis of $D(\Ac)$, for
    every $v \in T_{V,z}$ there are $f_1,\ldots,f_\ell \in S$
    such that $v = \rho_z(\sum_{i=1}^\ell f_i \theta_i) = \sum_{i=1}^\ell f_i(z) \rho_z(\theta_i)$.
    Consequently, $\rho_z(\theta_1),\ldots,\rho_z(\theta_\ell)$
    is a basis of the vector space $T_{V,z}$.
\end{proof}

The next fact provides a central part in
the proof of Theorem \ref{Thm_MATRest}.

\begin{proposition}\label{Prop_FreeSubarrIneqRk}
    Let $\Bc \subseteq \Ac$ be free arrangements. Assume that
    $\theta_1,\ldots,\theta_\ell$ is a basis of $D(\Bc)$ such that
    $\theta_i \in D(\Ac)$ for $1\leq i \leq \ell-q$.
    Suppose $X \in L(\Ac)$ such that $\Ac_X \cap \Bc = \varnothing$.
    Then $\rk(X) \leq q$.
\end{proposition}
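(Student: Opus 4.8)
The plan is to exploit the evaluation map $\rho_z$ at a generic point of a suitable stratum, together with the tangent-space description in Proposition~\ref{Prop_TangentSpace} and its corollary. First I would choose a point $z \in M(\Bc_X)$, that is, a point lying on $X$ but off every hyperplane of $\Bc$ not containing $X$; since $\Bc$ is an arrangement (finitely many hyperplanes), such a $z$ exists and in fact $z$ can be taken in $M(\Bc^X) \cap X$. The key observation is the hypothesis $\Ac_X \cap \Bc = \varnothing$: every hyperplane of $\Bc$ fails to contain $X$, so the localization $\Bc_X$ is empty, and hence $M(\Bc_X) = X$ (as an open subset of $X$, after the usual base change $X = \ker x_{\ell-q+1} \cap \cdots \cap \ker x_\ell$). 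Concretely I can pick $z \in X$ generic, so that $z \notin H$ for all $H \in \Bc$, i.e. $z \in M(\Bc)$.

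Now apply Corollary~\ref{Coro_TangentSpace} to the free arrangement $\Bc$: since $\theta_1,\dots,\theta_\ell$ is a basis of $D(\Bc)$ and $z \in M(\Bc)$, the vectors $\rho_z(\theta_1),\dots,\rho_z(\theta_\ell)$ form a $\KK$-basis of the full tangent space $T_{V,z}$, which has dimension $\ell$. On the other hand, the first $\ell-q$ derivations $\theta_1,\dots,\theta_{\ell-q}$ lie in $D(\Ac)$ by hypothesis, and $z$ lies on $X \in L(\Ac)$; I would like to say $z \in M(\Ac^X)$, i.e. that $z$ avoids all hyperplanes of $\Ac$ that do not contain $X$. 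This need not hold for every generic $z \in X$ if $\Ac$ is much bigger than $\Bc$ — but it does hold for generic $z$: the hyperplanes $H \in \Ac \setminus \Ac_X$ meet $X$ in proper subspaces, so their union is a proper closed subset of $X$, and I simply choose $z$ outside it (and outside every $H \in \Bc$ as well — still only finitely many conditions). With such a $z$, Proposition~\ref{Prop_TangentSpace} applied to $\Ac$ gives $\rho_z(D(\Ac)) = T_{X,z}$, a subspace of $T_{V,z}$ of dimension $\dim X = \ell - q$.

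Combining: the vectors $\rho_z(\theta_1),\dots,\rho_z(\theta_{\ell-q})$ all lie in $\rho_z(D(\Ac)) = T_{X,z}$, which is $(\ell-q)$-dimensional, while the full list $\rho_z(\theta_1),\dots,\rho_z(\theta_\ell)$ is linearly independent in $T_{V,z}$. Suppose for contradiction that $\rk(X) > q$, i.e. $\dim X < \ell - q$, so $\dim T_{X,z} = \dim X \le \ell - q - 1$. Then the $\ell - q$ independent vectors $\rho_z(\theta_1),\dots,\rho_z(\theta_{\ell-q})$ cannot fit inside the $(\le \ell-q-1)$-dimensional space $T_{X,z}$ — a contradiction. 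Hence $\dim X \ge \ell - q$, i.e. $\rk(X) = \ell - \dim X \le q$, as claimed.

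The only delicate point — and the step I expect to need the most care — is the simultaneous choice of the generic point $z$: it must lie on $X$, avoid all hyperplanes of $\Bc$ (to invoke Corollary~\ref{Coro_TangentSpace}), and avoid all hyperplanes of $\Ac \setminus \Ac_X$ (to invoke Proposition~\ref{Prop_TangentSpace} for $\Ac$, guaranteeing $z \in M(\Ac^X)$). Each of these is the complement of a proper Zariski-closed subset of the irreducible variety $X$, and there are finitely many of them, so the intersection is nonempty (over the infinite field $\KK = \RR$ or $\CC$); this is routine. Everything else is just dimension counting via the two tangent-space propositions already established.
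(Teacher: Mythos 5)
Your proof is correct and follows essentially the same route as the paper's: both pick a point $z \in M(\Ac^X) \cap M(\Bc)$ (nonempty precisely because $\Ac_X \cap \Bc = \varnothing$ forces every relevant hyperplane to meet $X$ in a proper subspace), then combine Proposition \ref{Prop_TangentSpace} and Corollary \ref{Coro_TangentSpace} to place the $\ell-q$ linearly independent vectors $\rho_z(\theta_1),\ldots,\rho_z(\theta_{\ell-q})$ inside $T_{X,z}$ and conclude by counting dimensions. Your extra care in justifying the existence of the generic point $z$ is a welcome elaboration of what the paper leaves implicit.
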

\begin{proof}
    Since $\Ac_X \cap \Bc = \varnothing$, we see that $M(\Ac^X) \cap M(\Bc) \ne \varnothing$. 
    Let  $z \in M(\Ac^X) \cap M(\Bc)$.
    Then thanks to Proposition \ref{Prop_TangentSpace}, we have $\rho_z(D(\Ac)) = T_{X,z}$.
    But also the tangent vectors $\rho_z(\theta_1),\ldots,\rho_z(\theta_\ell)$ form
    a vector space basis of $T_{V,z}$,     by Corollary \ref{Coro_TangentSpace}. 
    By our assumption the $\theta_i$ belong to $D(\Ac)$ for $1\leq i \leq \ell-q$.
    So $\rho_z(\theta_1),\ldots,\rho_z(\theta_{\ell-q}) \in T_{X,z}$ and they are linearly independent.
    Hence $\dim(X) = \dim(T_{X,z}) \geq \ell-q$, i.e., $\rk(X) \leq q$.
\end{proof}


\subsection{MAT-free arrangements}
\label{SSec_MAT}

We begin by recalling the core result from \cite{ABCHT16_FreeIdealWeyl}, the so-called Multiple Addition Theorem (MAT).

\begin{theorem}[{\cite[Thm.~3.1]{ABCHT16_FreeIdealWeyl}}]
    \label{Thm_MAT}
    Let $\Ac' = (\Ac', V)$ be a free arrangement with
    $\exp(\Ac')=(e_1,\ldots,e_\ell)_\le$
    and let $1 \le p \le \ell$ be the multiplicity of the highest exponent, i.e.
    \[ e_{\ell-p} < e_{\ell-p+1} =\cdots=e_\ell=:e. \]
    Let $H_1,\ldots,H_q$ be hyperplanes in $V$ with
    $H_i \not \in \Ac'$ for $i=1,\ldots,q$. Define
    \[ \Ac''_j:=(\Ac'\cup \{H_j\})^{H_j}=\{H\cap H_{j} \mid H\in \Ac'\}, \quad \text{ for }j=1,\ldots,q. \]
    Assume that the following conditions are satisfied:
    \begin{itemize}
        \item[(1)]
            $X:=H_1 \cap \cdots \cap H_q$ is $q$-codimensional.
        \item[(2)]
            $X \not \subseteq \bigcup_{H \in \Ac'} H$.
        \item[(3)]
            $|\Ac'|-|\Ac''_j|=e$ for $1 \le j \le q$.
    \end{itemize}
    Then $q \leq p$ and $\Ac:=\Ac' \cup \{H_1,\ldots,H_q\}$ is free with
    $$\exp(\Ac)=(e_1,\ldots,e_{\ell-q},e+1,\ldots,e+1)_\le.$$
\end{theorem}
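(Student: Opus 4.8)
The plan is to produce an explicit basis of $D(\Ac)$ by taking a homogeneous basis $\theta_1,\ldots,\theta_\ell$ of $D(\Ac')$ with $\deg\theta_i = e_i$ (so $\deg\theta_{\ell-p+1} = \cdots = \deg\theta_\ell = e$), keeping $\theta_1,\ldots,\theta_{\ell-q}$ unchanged, and replacing the top $q$ derivations $\theta_{\ell-q+1},\ldots,\theta_\ell$ by suitable modifications of degree $e+1$. Concretely, for each $j=1,\ldots,q$ let $\alpha_j := \alpha_{H_j}$ be a defining form for $H_j$; one then looks for derivations of the form $\varphi_j = \sum_{i=\ell-q+1}^{\ell} f_{ij}\,\theta_i$ of degree $e+1$ (so the $f_{ij}$ are linear forms) that lie in $D(\Ac)$. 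Since $\theta_i \in D(\Ac')$ already handles all hyperplanes of $\Ac'$, the only new condition is $\varphi_j(\alpha_k) \in \alpha_k S$ for $k=1,\ldots,q$; and since $\varphi_j$ has degree $e+1 = |\Ac'| - |\Ac''_k| + 1 > \deg(b_k)$ fails — wait, rather: by Proposition \ref{Prop_bPoly} applied to $\Ac' \cup \{H_k\}$ there is a form $b_k$ of degree $|\Ac'| - |\Ac''_k| = e$ with $\psi(\alpha_k) \in \alpha_k S + b_k S$ for all $\psi \in D(\Ac')$, so $\theta_i(\alpha_k) \equiv c_{ik} b_k \pmod{\alpha_k}$ for scalars $c_{ik}$ (using $\deg\theta_i = e = \deg b_k$), and $\varphi_j(\alpha_k) \equiv (\sum_i f_{ij} c_{ik}) b_k \pmod{\alpha_k}$. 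Hence $\varphi_j \in D(\Ac)$ iff each linear form $\sum_i f_{ij} c_{ik}$ is divisible by $\alpha_k$, i.e. is a scalar multiple of $\alpha_k$.

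The key step is then a linear-algebra argument over $\KK$. Form the $q \times q$ scalar matrix $C = (c_{ik})$ (rows indexed by $i \in \{\ell-q+1,\ldots,\ell\}$, columns by $k \in \{1,\ldots,q\}$). First I would argue $C$ is invertible: if some nonzero combination of the rows vanished, the corresponding combination $\theta := \sum_i \lambda_i \theta_i$ of degree-$e$ derivations would satisfy $\theta(\alpha_k) \in \alpha_k S$ for all $k$, hence $\theta \in D(\Ac)$ — but evaluating at a point $z \in X \setminus \bigcup_{H\in\Ac'}H$ (which exists by (2)) and using Proposition \ref{Prop_TangentSpace}, $\rho_z(\theta) \in T_{X,z}$ has dimension-$q$ codimension, while $\theta_1,\ldots,\theta_{\ell-q},\theta$ would give $\ell-q+1$ tangent vectors at $z$ whose images must lie in $T_{X,z}$; combined with the fact that $\rho_z(\theta_1),\ldots,\rho_z(\theta_\ell)$ is a basis of $T_{V,z}$ (Corollary \ref{Coro_TangentSpace}), one derives $\rk(X) \le q-1$, contradicting (1). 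This is essentially the mechanism of Proposition \ref{Prop_FreeSubarrIneqRk}, and it simultaneously yields $q \le p$: if $q > p$ then among $\theta_{\ell-q+1},\ldots,\theta_\ell$ some have degree $< e$, forcing (by Corollary \ref{Coro_degTheta}, since their degree is $< \deg b_k = e$) $\theta_i \in D(\Ac)$ for those $i$, which again overloads $T_{X,z}$ and contradicts (1). Once $C$ is invertible, choose $(f_{ij})$ so that $\sum_i f_{ij} c_{ik} = \delta_{jk}\alpha_k$, i.e. the linear-form matrix $F = (f_{ij})$ satisfies $F\,C = \mathrm{diag}(\alpha_1,\ldots,\alpha_q)$, so $F = \mathrm{diag}(\alpha_1,\ldots,\alpha_q)\,C^{-1}$ — a matrix of linear forms, giving the desired $\varphi_1,\ldots,\varphi_q$ of degree $e+1$ in $D(\Ac)$.

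Finally I would verify $\theta_1,\ldots,\theta_{\ell-q},\varphi_1,\ldots,\varphi_q$ is a basis of $D(\Ac)$ via Saito's criterion (Theorem \ref{Thm_SaitosCriterion}). The coefficient matrix of the new tuple is $M(\theta_1,\ldots,\theta_\ell)\cdot \mathrm{blockdiag}(I_{\ell-q}, F^{\mathsf T})$ up to reindexing, so its determinant is $\det M(\theta_1,\ldots,\theta_\ell) \cdot \det F = c\,Q(\Ac')\cdot (\alpha_1\cdots\alpha_q)\det C^{-1}$ for a nonzero scalar $c$, which is a nonzero scalar times $Q(\Ac')\,\alpha_1\cdots\alpha_q = Q(\Ac)$; hence the tuple is a basis, $\Ac$ is free, and reading off degrees gives $\exp(\Ac) = (e_1,\ldots,e_{\ell-q},e+1,\ldots,e+1)_\le$. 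The main obstacle is the invertibility of $C$ together with the bound $q\le p$: this is where conditions (1) and (2) are genuinely used, and it requires the topological/tangent-space input of Section \ref{SSec_tangent} rather than pure commutative algebra; everything else is bookkeeping with $b$-polynomials and determinants.
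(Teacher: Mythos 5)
The paper does not reprove this theorem (it is quoted from \cite{ABCHT16_FreeIdealWeyl}), but your strategy --- $b$-polynomials, a scalar matrix $C$ of leading coefficients, evaluation at a generic point of $X$ --- is essentially the one used there and echoed in Lemmas \ref{Lem_MATstepMatrixC} and \ref{Lem_BasisMATStep}. Your version, however, has a genuine gap that appears exactly when $q<p$. You keep $\theta_1,\ldots,\theta_{\ell-q}$ unchanged and modify only the top $q$ derivations, but for $\ell-p<i\le\ell-q$ one has $\deg\theta_i=e=\deg b_k$, so Corollary \ref{Coro_degTheta} does not apply and $\theta_i$ need not lie in $D(\Ac)$: its coefficient $c_{ik}$ can be nonzero. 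Then your final tuple is not even contained in $D(\Ac)$, and Saito's criterion (whose hypothesis is $\theta_1,\ldots,\theta_\ell\in D(\Ac)$) cannot be invoked. The repair is to work with the full $p\times q$ matrix $C=(c_{ik})$, $\ell-p<i\le\ell$, $1\le k\le q$, show it has rank $q$, and row-reduce by some $T\in\GL_p(\KK)$ so that $p-q$ of the new degree-$e$ derivations have zero rows (hence already lie in $D(\Ac)$), while the remaining $q$, say $\eta_1,\ldots,\eta_q$, satisfy $\eta_j(\alpha_k)\equiv\delta_{jk}b_k \bmod \alpha_k$, whence $\alpha_j\eta_j\in D(\Ac)$; this is precisely the content of Lemmas \ref{Lem_MATstepMatrixC} and \ref{Lem_BasisMATStep}.

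Relatedly, your invertibility argument for the $q\times q$ submatrix is circular and, even when read charitably, too weak. You count $\theta_1,\ldots,\theta_{\ell-q},\theta$ as $\ell-q+1$ vectors whose images must lie in $T_{X,z}$, but the middle ones ($\ell-p<i\le\ell-q$) are exactly those not known to be in $D(\Ac)$, so only $\ell-p+1$ vectors are actually known to land in $T_{X,z}$; that yields $q\le p-1$, which is no contradiction with $X$ having codimension $q$. The correct count uses the whole matrix: the left null space of the $p\times q$ matrix contributes $p-\rk(C)$ further independent degree-$e$ elements of $D(\Ac)$, so $\ell-p+(p-\rk(C))\le\dim T_{X,z}=\ell-q$, forcing $\rk(C)\ge q$, hence $\rk(C)=q$ and $q\le p$ simultaneously (the mechanism of Proposition \ref{Prop_FreeSubarrIneqRk}). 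Moreover, rank $q$ only guarantees that \emph{some} $q$ rows are independent, not the last $q$, so your chosen square submatrix may well be singular (e.g.\ if $\theta_\ell$ happens to lie in $D(\Ac)$ already). Once these two points are fixed, your concluding Saito computation goes through unchanged.
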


We often consider the addition of several hyperplanes using
Theorem \ref{Thm_MAT}. This motivates the next terminology.

\begin{definition}
    Let $\Ac'$ and $\{H_1,\ldots,H_q\}$ be as in Theorem \ref{Thm_MAT} such that
    conditions (1)--(3) are satisfied. Then the addition 
    of $\{H_1,\ldots,H_q\}$ to $\Ac'$ resulting in $\Ac = \Ac' \cup \{H_1,\ldots,H_q\}$
    is called an \emph{MAT-step}.
\end{definition}

The following lemma about a certain basis of the derivation module across an MAT-step 
follows directly from the last lines of the proof of Theorem \ref{Thm_MAT} (cf.~\cite[Thm.~3.1]{ABCHT16_FreeIdealWeyl}).

\begin{lemma}
    \label{Lem_BasisMATStep}
    Let $\Ac'$ be free with $\exp(\Ac') = (e_1,\ldots,e_{\ell-p},e,\ldots,e)_\le$ and 
    let $\Ac = \Ac' \dot{\cup} \{H_1=\ker(\alpha_1),\ldots,H_p=\ker(\alpha_p)\}$ be an MAT-step.
    
    Then there is a basis $\theta_1,\ldots,\theta_{\ell-p}$, $\eta_1,\ldots,\eta_p$ 
    of $D(\Ac')$ with $\deg(\theta_i) = e_i$ for $1\leq i \leq \ell-p$ and
    $\deg(\eta_j) = e$ for $1\leq j \leq p$,
    such that for any subset $N \subseteq \{1,\ldots,p\}$ and its complement $\overline{N} = \{1,\ldots,p\} \setminus N$, 
    the derivations
    \[
        \theta_1,\ldots,\theta_{\ell-p},\{\alpha_j\eta_j \}_{j \in \overline{N}},\{\eta_i\}_{i \in N}
    \]
    form a basis of $D(\Ac'\cup\{H_j \mid j \in \overline{N}\})$.
    In particular, $\Ac'\cup\{H_j \mid j \in \overline{N}\}$ is free. 
\end{lemma}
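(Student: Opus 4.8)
The plan is to extract the relevant basis directly from the proof of Theorem~\ref{Thm_MAT} in \cite{ABCHT16_FreeIdealWeyl} and then check that the displayed tuples remain bases under the deletions described. First I would recall the mechanism of the MAT proof: one starts with a homogeneous basis $\theta_1,\ldots,\theta_{\ell-p},\eta_1,\ldots,\eta_p$ of $D(\Ac')$, where $\deg\theta_i=e_i$ and $\deg\eta_j=e$, and one shows that $\theta_1,\ldots,\theta_{\ell-p},\alpha_1\eta_1,\ldots,\alpha_p\eta_p$ is a basis of $D(\Ac)$ by a Saito-criterion computation: each $\alpha_j\eta_j$ lies in $D(\Ac)$ because $\eta_j(\alpha_j)\in\alpha_j S + b_j S$ with $\deg b_j = |\Ac'|-|\Ac''_j| = e = \deg\eta_j$, so $\alpha_j\eta_j(\alpha_j)\in\alpha_j S$, and the determinant of the coefficient matrix picks up exactly the extra factor $\alpha_1\cdots\alpha_p = Q(\Ac)/Q(\Ac')$. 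The key point I want to isolate is that this argument is insensitive to \emph{which} of the $\eta_j$ get multiplied by their $\alpha_j$: for any $N\subseteq\{1,\ldots,p\}$, the same reasoning shows that $\theta_1,\ldots,\theta_{\ell-p},\{\alpha_j\eta_j\}_{j\in\overline N},\{\eta_i\}_{i\in N}$ lies in $D(\Ac'\cup\{H_j\mid j\in\overline N\})$.

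Next I would verify freeness and the basis property for the intermediate arrangement $\Bc_N := \Ac'\cup\{H_j\mid j\in\overline N\}$. Membership in $D(\Bc_N)$: the $\theta_i$ and the $\eta_i$ with $i\in N$ already lie in $D(\Ac')\subseteq D(\Bc_N)$, and for $j\in\overline N$ one has $\alpha_j\eta_j\in D(\Ac'\cup\{H_j\})\subseteq D(\Bc_N)$ by the same $b$-polynomial computation as above (using condition~(3), $|\Ac'|-|\Ac''_j|=e=\deg\eta_j$, so $\eta_j(\alpha_j)\in\alpha_j S+b_jS$ forces $\alpha_j\eta_j(\alpha_j)\in\alpha_jS$). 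Then I apply Saito's criterion (Theorem~\ref{Thm_SaitosCriterion}): the coefficient matrix $M$ of the tuple $\theta_1,\ldots,\theta_{\ell-p},\{\alpha_j\eta_j\}_{j\in\overline N},\{\eta_i\}_{i\in N}$ differs from the coefficient matrix $M_0$ of the original basis of $D(\Ac')$ only by scaling the columns indexed by $\overline N$ by the respective $\alpha_j$; hence $\det M = \big(\prod_{j\in\overline N}\alpha_j\big)\cdot\det M_0 \in \KK^\times\,\big(\prod_{j\in\overline N}\alpha_j\big)\,Q(\Ac') = \KK^\times\,Q(\Bc_N)$, since $Q(\Bc_N)=Q(\Ac')\prod_{j\in\overline N}\alpha_j$. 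This simultaneously proves that $\Bc_N$ is free and that the displayed tuple is a basis of $D(\Bc_N)$, giving the "in particular" clause for free.

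The main obstacle — really the only delicate point — is making sure the $b$-polynomial argument applies to each $\Ac'\cup\{H_j\}$ individually with the correct degree count, i.e.\ that the relevant polynomial $b_j$ attached to deleting $H_j$ from $\Ac'\cup\{H_j\}$ (Proposition~\ref{Prop_bPoly}) has degree exactly $|\Ac'|-|\Ac''_j|$, which by hypothesis~(3) equals $e=\deg\eta_j$; the inequality $\deg\eta_j = e \not< \deg b_j$ means Corollary~\ref{Coro_degTheta} does \emph{not} directly give $\eta_j\in D(\Ac'\cup\{H_j\})$ — indeed it generally is not — but it does give $\eta_j(\alpha_j)\in\alpha_jS+b_jS$ with $\deg b_j=e$, and multiplying by $\alpha_j$ lands us in $\alpha_j S$, which is what we need. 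I would also note that conditions~(1) and~(2) of the MAT-step guarantee the $\alpha_j$ are pairwise non-proportional and non-proportional to any $\alpha_H$, $H\in\Ac'$, so that $Q(\Bc_N)$ factors as claimed and no degeneracy occurs in the determinant. With these points in place the lemma follows, as stated, directly from the proof of Theorem~\ref{Thm_MAT}; I would phrase the writeup as "inspecting the final lines of the proof of \cite[Thm.~3.1]{ABCHT16_FreeIdealWeyl}, one observes \ldots" and then supply the short Saito-criterion verification above.
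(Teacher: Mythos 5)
Your overall strategy (extract the basis from the proof of the MAT, then run Saito's criterion) is the right one, and your determinant computation is fine, but there is a genuine gap in the membership step, and it hides exactly the point that makes the lemma nontrivial. You write that ``the $\theta_i$ and the $\eta_i$ with $i\in N$ already lie in $D(\Ac')\subseteq D(\Bc_N)$'' and that ``$\alpha_j\eta_j\in D(\Ac'\cup\{H_j\})\subseteq D(\Bc_N)$.'' Both inclusions are backwards: since $\Ac'\subseteq\Bc_N$ and $\Ac'\cup\{H_j\}\subseteq\Bc_N$, one has $D(\Bc_N)\subseteq D(\Ac')$ and $D(\Bc_N)\subseteq D(\Ac'\cup\{H_j\})$, not the reverse. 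For the $\theta_i$ this is repairable via Corollary \ref{Coro_degTheta}, because $\deg\theta_i=e_i<e=|\Ac'|-|\Ac''_k|$ for every $k$, so $\theta_i\in D(\Ac'\cup\{H_k\})$ for each $k$ and hence $\theta_i\in D(\Bc_N)$. But for the $\eta_i$ with $i\in N$ (and likewise for $\alpha_j\eta_j$ at the hyperplanes $H_k$ with $k\in\overline N$, $k\neq j$) one needs $\eta_i(\alpha_k)\in\alpha_k S$ for $k\neq i$, and this is \emph{false for a generic homogeneous basis}: $\deg\eta_i=e=\deg b_k$, so the degree criterion does not apply, and by Lemma \ref{Lem_MATstepMatrixC} one only knows $\eta_i(\alpha_k)\equiv c_{ik}b_k \bmod \alpha_k S$ with $c_{ik}$ possibly nonzero. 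If any off-diagonal $c_{ik}\neq 0$, the displayed tuple is not even contained in $D(\Bc_N)$, so Saito's criterion cannot be invoked and the conclusion fails for that basis.

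The missing idea is the normalization of the degree-$e$ part of the basis, which is precisely what the last lines of the proof of \cite[Thm.~3.1]{ABCHT16_FreeIdealWeyl} supply and why the lemma asserts the existence of \emph{a} basis rather than a property of every basis. In the situation of the lemma one adds exactly $p$ hyperplanes, so the matrix $C=(c_{ij})\in\KK^{p\times p}$ of Lemma \ref{Lem_MATstepMatrixC} is square of rank $p$, hence invertible; replacing $(\eta_1,\ldots,\eta_p)$ by $(\eta_1,\ldots,\eta_p)\cdot C^{-1}$ (still a homogeneous basis of the degree-$e$ part together with the $\theta_i$, by Saito) arranges $\eta_i(\alpha_j)\equiv \delta_{ij}b_j \bmod \alpha_j S$. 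Only after this diagonalization do you get $\eta_i\in D(\Ac'\cup\{H_j\})$ for all $j\neq i$, hence $\eta_i\in D(\Bc_N)$ for $i\in N$ and $\alpha_j\eta_j\in D(\Bc_N)$ for $j\in\overline N$; your Saito determinant computation then closes the argument. Please add this normalization step explicitly.
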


As a consequence of Lemma \ref{Lem_BasisMATStep} and 
a  successive application of Proposition \ref{Prop_AddDelRestr} we obtain the following.

\begin{corollary}
    \label{Coro_MAT-stepRestriction}
    Let $\Ac'$ be free, $\Ac = \Ac' \dot{\cup} \{H_1,\ldots,H_p\}$ an MAT-step
    and $\Cc \subseteq \{H_1,\ldots,H_p\}$.
    Suppose that $\exp(\Ac) = (e_1,\ldots,e_\ell)_\leq$ and let
    $X := \cap_{H\in\Cc}H$.
    Then $\Ac^X$ is free with $\exp(\Ac^X) = (e_1,\ldots,e_{\ell-|\Cc|})_\leq$.
\end{corollary}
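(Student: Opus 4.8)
The plan is to argue by induction on $c := |\Cc|$, uniformly over all MAT-steps, invoking Proposition~\ref{Prop_AddDelRestr} exactly once in the inductive step. The base case $c = 0$ is immediate, since then $X = V$ and $\Ac^X = \Ac$. In the inductive step I would assume $c \ge 1$ and that the statement holds for every MAT-step together with every subset of strictly fewer than $c$ of its added hyperplanes. Since conditions (1)--(3) of Theorem~\ref{Thm_MAT} are symmetric in $H_1, \ldots, H_p$, I may relabel so that $\Cc = \{H_1, \ldots, H_c\}$, and note $c \le p$; then I set $\Cc' := \{H_1, \ldots, H_{c-1}\}$ and $Y := \bigcap_{H \in \Cc'} H$, so that $X = Y \cap H_c$.

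The argument then rests on two observations. First, $\Ac \setminus \{H_c\} = \Ac' \dot{\cup} \{H_j \mid j \ne c\}$ is again an MAT-step over $\Ac'$: each of the conditions (1)--(3) of Theorem~\ref{Thm_MAT} for the subfamily $\{H_j \mid j \ne c\}$ follows at once from the corresponding condition for $\{H_1, \ldots, H_p\}$ (for (1), $\bigcap_{j \ne c} H_j \supseteq \bigcap_{j=1}^p H_j$ has codimension $p$, hence $\bigcap_{j \ne c} H_j$ has codimension $p - 1$; for (2), a point of $\bigcap_{j=1}^p H_j$ avoiding $\bigcup_{H \in \Ac'} H$ also lies in $\bigcap_{j \ne c} H_j$; and (3) is assumed for every $j$). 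Hence the induction hypothesis applies both to $(\Ac, \Cc')$ and to $(\Ac \setminus \{H_c\}, \Cc')$, the subsets having cardinality $c - 1$, so that $\Ac^Y$ and $(\Ac \setminus \{H_c\})^Y$ are both free and $\exp(\Ac^Y) = (e_1, \ldots, e_{\ell - c + 1})_\le$. Second, the family $H_1, \ldots, H_p$ is generic relative to $\Ac$, in the sense that for every $J \subseteq \{1, \ldots, p\}$ one has $\Ac_{Z_J} = \{H_j \mid j \in J\}$, where $Z_J := \bigcap_{j \in J} H_j$: a hyperplane $K \in \Ac$ containing $Z_J$ cannot be some $H_i$ with $i \notin J$ (otherwise $Z_{J \cup \{i\}} = Z_J$ has codimension $|J| < |J \cup \{i\}|$, contradicting (1)), nor can it lie in $\Ac'$ (otherwise $K \supseteq Z_J \supseteq \bigcap_{j=1}^p H_j$, contradicting (2)).

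Applying genericity with $J = \{1, \ldots, c\}$ and $J = \{1, \ldots, c-1\}$ yields $\Ac_X = \{H_1, \ldots, H_c\}$ and $\Ac_Y = \{H_1, \ldots, H_{c-1}\}$. In particular $H_c \notin \Ac_Y$, so $\bar H := H_c \cap Y$ is a hyperplane of $\Ac^Y$, and $H_c$ is the only $K \in \Ac \setminus \Ac_Y$ with $K \cap Y = \bar H$ (any such $K$ satisfies $K \supseteq X$, hence $K \in \Ac_X$, while $K \notin \Ac_Y$, forcing $K = H_c$). Consequently
\[
    \bigl( \Ac^Y \bigr) \setminus \{\bar H\} = \bigl( \Ac \setminus \{H_c\} \bigr)^Y ,
\]
which is free. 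Now $\Ac^Y$ and its deletion $\bigl( \Ac^Y \bigr) \setminus \{\bar H\}$ are both free, so Proposition~\ref{Prop_AddDelRestr}, applied to the hyperplane $\bar H$ of $\Ac^Y$, shows that $\bigl( \Ac^Y \bigr)^{\bar H}$ is free with exponents obtained from $\exp(\Ac^Y) = (e_1, \ldots, e_{\ell - c + 1})_\le$ by deleting its largest entry; that is, $\exp\bigl( (\Ac^Y)^{\bar H} \bigr) = (e_1, \ldots, e_{\ell - c})_\le$. Since restriction is transitive — one checks directly at the level of members that $(\Ac^Y)^{\bar H} = \Ac^{Y \cap H_c}$ — it follows that $\Ac^X = (\Ac^Y)^{\bar H}$ is free with $\exp(\Ac^X) = (e_1, \ldots, e_{\ell - c})_\le$, completing the induction.

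The delicate point is the identity $\bigl( \Ac^Y \bigr) \setminus \{\bar H\} = \bigl( \Ac \setminus \{H_c\} \bigr)^Y$: a priori, removing the single image hyperplane $\bar H = H_c \cap Y$ from $\Ac^Y$ need not coincide with first deleting $H_c$ from $\Ac$ and then restricting, since other hyperplanes of $\Ac$ might restrict onto $\bar H$ as well, and ruling this out is exactly where conditions (1) and (2) of the MAT-step enter, via the localization identity $\Ac_X = \{H_1, \ldots, H_c\}$. The remaining ingredients — stability of the MAT-step conditions under deleting one of the added hyperplanes, transitivity of restriction, and the bookkeeping of exponents — are routine.
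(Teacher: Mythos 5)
Your proof is correct, and it follows the same overall strategy the paper has in mind: the paper derives this corollary in one line from Lemma \ref{Lem_BasisMATStep} together with a ``successive application'' of Proposition \ref{Prop_AddDelRestr}, and your induction on $|\Cc|$ is a rigorous instantiation of exactly that successive application. The genuine difference is where the freeness of the intermediate deletions comes from. The paper gets it from the explicit bases of $D(\Ac'\cup\{H_j \mid j\in\overline N\})$ supplied by Lemma \ref{Lem_BasisMATStep}, whereas you bypass that lemma entirely by observing that removing one of the added hyperplanes from an MAT-step leaves an MAT-step, so Theorem \ref{Thm_MAT} itself gives freeness of $\Ac\setminus\{H_c\}$ and the induction hypothesis gives freeness of $(\Ac\setminus\{H_c\})^Y$; this buys you independence from the basis-tracking lemma at the cost of re-verifying conditions (1)--(3). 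Your genericity observation $\Ac_{Z_J}=\{H_j\mid j\in J\}$, which justifies the identity $(\Ac^Y)\setminus\{\bar H\}=(\Ac\setminus\{H_c\})^Y$, makes explicit a point the paper leaves implicit, and you correctly locate it as the place where conditions (1) and (2) enter. Two spots deserve one more sentence each in a final write-up: for condition (1) of the sub-step, the correct argument is that intersecting $\bigcap_{j\ne c}H_j$ with $H_c$ drops the dimension by at most one, so its codimension is at least, hence exactly, $p-1$; and to see that Proposition \ref{Prop_AddDelRestr} removes the \emph{largest} entry of $\exp(\Ac^Y)$, compare with $\exp\bigl((\Ac\setminus\{H_c\})^Y\bigr)$, which your induction hypothesis computes as the first $\ell-c+1$ exponents of $\Ac\setminus\{H_c\}$, namely $(e_1,\ldots,e_{\ell-p},e,e+1,\ldots,e+1)$ --- the two tuples differ in exactly one entry, an $e+1$ against an $e$, so the common part is indeed $(e_1,\ldots,e_{\ell-c})$. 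Neither point is a gap.
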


An iterative application of Theorem \ref{Thm_MAT} motivates the following natural concept.

\begin{definition}%
    [{\cite[Def.~3.2, Lem.~3.8]{CunMue19_MATfree}}]
    \label{Def_MATfree}%

    An arrangement $\Ac$ is called \emph{MAT-free} if there exists an ordered partition
    \[
        \pi = (\pi_1|\cdots|\pi_n)
    \] 
    of $\Ac$ such that the following hold. 
    Set $\Ac_0 := \varnothing_\ell$ and
    \[
        \Ac_k := \bigcup_{i=1}^k \pi_i \quad\text{ for } 1 \leq k \leq n.
    \]
    Then for every $0 \leq k \leq n-1$ suppose that
    \begin{itemize}
    \item[(1)] $\rk(\pi_{k+1}) = \vert \pi_{k+1} \vert$,
    \item[(2)] $\cap_{H \in \pi_{k+1}} H \nsubseteq \bigcup_{H' \in \Ac_k}H'$,
    \item[(3)] $\vert \Ac_k \vert - \vert (\Ac_k \cup \{H\})^H \vert = k$ for each $H \in \pi_{k+1}$,
    \end{itemize}
    i.e.\ $\Ac_{k+1} = \Ac_{k}\cup\pi_{k+1}$ is an MAT-step.
    
    An ordered partition $\pi$ with these properties is called an \emph{MAT-partition} for $\Ac$.
\end{definition}

Note that in \cite{CunMue19_MATfree} MAT-freeness was defined differently.
However, for our purpose its characterization in \cite[Lem.~3.8]{CunMue19_MATfree} is sufficient.
Hence we take the latter here for our definition.

\begin{remark}
    \label{rem:MAT-free}
    Suppose that $\Ac$ is MAT-free with MAT-partition
    $\pi = (\pi_1|\cdots|\pi_n)$. Then we have:
    \begin{enumerate}[(a)]
        \item 
        for each $1 \leq k \le n$, $\Ac_k$ is MAT-free with MAT-partition $(\pi_1|\cdots|\pi_{k})$, 
        
        \item
        $\Ac$ is free and the exponents 
        $\exp(\Ac) = (e_1,\ldots,e_\ell)_\le$ of $\Ac$ are given by the block sizes
        of the dual partition of $\pi$: 
        \[ 
            e_i := |\{k \mid |\pi_k|\geq \ell-i+1 \}|, 
        \]
        
        \item 
        $|\pi_1| > |\pi_2| \geq \cdots \geq |\pi_n|$.
    \end{enumerate}
\end{remark}
\begin{proof}
    Statement (a) is clear by Definition \ref{Def_MATfree}.
    Statements (b) and (c) follow readily from Theorem \ref{Thm_MAT} and a simple induction.
\end{proof}


\section{Proof of Theorem \ref{Thm_MATRest}}
\label{Sec_ProofMain}

We begin with a closer investigation of MAT-free arrangements.
The idea of our proof is to control a basis of the module of $\Ac$-derivations through MAT-steps 
and to apply Proposition \ref{Prop_FreeSubarrIneqRk}.

This in turn leads to the identification of a restricted subarrangement 
with a restriction of the whole arrangement.
Since the former is free with the right exponents, by Corollary \ref{Coro_MAT-stepRestriction},
the freeness of the restriction of the entire arrangement follows.

\subsection{Basis derivations across MAT-steps}
\label{SSec_ProofMainTechPrelim}

First, we recall a technical result which also provided a key step in
the proof of Theorem \ref{Thm_MAT} (cf.~\cite[p.~1343]{ABCHT16_FreeIdealWeyl}).

\begin{lemma}
    \label{Lem_MATstepMatrixC}
    Let $\Ac'$ be free with 
    $\exp(\Ac') = (e_1,\ldots,e_{\ell-p}$, $e,\ldots,$ $e)_\leq$, $e_{\ell-p} < e$, 
    and $\Ac = \Ac' \dot{\cup} \{H_1=\ker(\alpha_1),\ldots,H_r=\ker(\alpha_r)\}$ be an MAT-step. 
    Let $\theta_1,\ldots,\theta_{\ell-p}$, 
    $\varphi_1,\ldots,\varphi_p$ be a homogeneous basis
    of $D(\Ac')$ with $\deg(\theta_i) = e_i$ for $1\leq i \leq \ell-p$ and
    $\deg(\varphi_i) =e$ for $1\leq i \leq p$.
    
    Then there are a matrix $C = (c_{ij}) \in \KK^{p\times r}$ of rank $r$ 
    and homogeneous polynomials $b_j$ of degree $e$, for $1\leq j \leq r$, such that
    \begin{align*}
        \varphi_i(\alpha_j) \equiv c_{ij}b_j \mod \alpha_j S \quad\text{ for } 1\leq i \leq p, 1\leq j \leq r.
    \end{align*}
\end{lemma}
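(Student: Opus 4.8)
\emph{Proof sketch.}
The plan is to read off the polynomials $b_j$ and the scalars $c_{ij}$ from the $b$-polynomial construction of Proposition~\ref{Prop_bPoly}, and then to pin down $\rk C$ by computing the difference of graded dimensions $\dim_\KK D(\Ac')_e-\dim_\KK D(\Ac)_e$ in two different ways.

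First, for each fixed $j\in\{1,\dots,r\}$ I would apply Proposition~\ref{Prop_bPoly} to the triple with large arrangement $\Ac'\cup\{H_j\}$, deletion $\Ac'$ and restriction $(\Ac'\cup\{H_j\})^{H_j}=\Ac''_j$: this produces a homogeneous $b_j\in S$ of degree $|\Ac'|-|\Ac''_j|$ with $\theta(\alpha_j)\in\alpha_j S+b_j S$ for every $\theta\in D(\Ac')$, and this degree equals $e$ by condition~(3) of an MAT-step. Since $\varphi_i$ is homogeneous of degree $e$, the polynomial $\varphi_i(\alpha_j)$ is homogeneous of degree $e=\deg b_j$; comparing homogeneous components in the inclusion $\varphi_i(\alpha_j)\in\alpha_j S+b_j S$ then yields a unique scalar $c_{ij}\in\KK$ with $\varphi_i(\alpha_j)\equiv c_{ij}b_j\pmod{\alpha_j S}$. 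Likewise $\theta_k(\alpha_j)$ is homogeneous of degree $e_k\le e_{\ell-p}<e=\deg b_j$, which forces $\theta_k(\alpha_j)\in\alpha_j S$, i.e.\ $\theta_k\in D(\Ac'\cup\{H_j\})$ (this is just Corollary~\ref{Coro_degTheta}). The one input genuinely needed here is that $\alpha_j\nmid b_j$ (in particular $b_j\ne 0$, so that $C=(c_{ij})\in\KK^{p\times r}$ is well defined): if $\alpha_j\mid b_j$, then $\alpha_j S+b_j S=\alpha_j S$, so the inclusion above would give $D(\Ac')\subseteq D(\Ac'\cup\{H_j\})\subseteq D(\Ac')$, hence equality; but then a homogeneous basis of $D(\Ac')$ is also a basis of $D(\Ac'\cup\{H_j\})$, and Saito's criterion (Theorem~\ref{Thm_SaitosCriterion}) would force $Q(\Ac')\in\KK^\times Q(\Ac'\cup\{H_j\})$, which is absurd on degree grounds.

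The substantive step is the rank assertion. I would parametrize the degree-$e$ part of $D(\Ac')$ through the given basis: every homogeneous $\theta\in D(\Ac')$ of degree $e$ is uniquely $\theta=\sum_{i=1}^{p}f_i\varphi_i+\sum_{k=1}^{\ell-p}g_k\theta_k$ with $f_i\in\KK$ and $g_k\in S_{e-e_k}$. As $\theta$ already lies in $D(\Ac')$, we have $\theta\in D(\Ac)$ iff $\theta(\alpha_j)\in\alpha_j S$ for $1\le j\le r$; and by the previous paragraph $\theta(\alpha_j)\equiv\bigl(\sum_i f_ic_{ij}\bigr)b_j\pmod{\alpha_j S}$, so, using that $\alpha_j$ is prime, $\alpha_j\nmid b_j$, and $\sum_i f_ic_{ij}\in\KK$, this amounts to $\sum_i f_ic_{ij}=0$ for all $j$, i.e.\ $(f_1,\dots,f_p)\in\ker(C^{\mathsf T})$. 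Hence $\dim_\KK D(\Ac')_e-\dim_\KK D(\Ac)_e=p-\dim_\KK\ker(C^{\mathsf T})=\rk C$. On the other hand, a free arrangement with exponents $d_1,\dots,d_\ell$ satisfies $\dim_\KK D(-)_m=\sum_{i=1}^{\ell}\dim_\KK S_{m-d_i}$; evaluating this at $m=e$ for $\Ac'$ (exponents $e_1,\dots,e_{\ell-p}$ together with $e$ of multiplicity $p$) and for $\Ac$ (which, by Theorem~\ref{Thm_MAT}, is free with exponents $e_1,\dots,e_{\ell-p}$ together with $e$ of multiplicity $p-r$ and $e+1$ of multiplicity $r$, so in particular $r\le p$), and using $\dim_\KK S_0=1$ and $\dim_\KK S_{-1}=0$, gives $\dim_\KK D(\Ac')_e-\dim_\KK D(\Ac)_e=p-(p-r)=r$. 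Comparing the two computations yields $\rk C=r$.

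I expect the main obstacle to be the clean description of $D(\Ac)_e$ inside $D(\Ac')_e$: one must verify that passing from $\Ac'$ to $\Ac$ imposes no condition in the $\theta_k$-directions and precisely the linear conditions encoded by $C$ in the $\varphi_i$-directions. This rests on the strict inequality $e_{\ell-p}<e$ and on $\alpha_j\nmid b_j$; once that is in place, the two graded-dimension counts are routine bookkeeping.
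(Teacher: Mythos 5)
Your argument is correct, but it reaches the rank assertion by a different route than the one the paper points to. The paper gives no proof of this lemma; it recalls it from the proof of Theorem~\ref{Thm_MAT} in \cite{ABCHT16_FreeIdealWeyl}, where the first half is exactly as you do it (Proposition~\ref{Prop_bPoly} applied to each $\Ac'\cup\{H_j\}$, degree comparison to extract $c_{ij}\in\KK$, and $\alpha_j\nmid b_j$), but the claim $\rk C=r$ is then deduced from conditions (1) and (2) of the MAT-step: one evaluates at a point $z\in X\cap M(\Ac')$, notes that $\rho_z(\theta_k)\in X$ while $\alpha_j(\rho_z(\varphi_i))=c_{ij}b_j(z)$, and uses that the $\ell$ tangent vectors span $T_{V,z}$ together with $\codim X=r$ to force the matrix $(c_{ij}b_j(z))$, hence $C$, to have rank $r$ --- this is precisely the evaluation-map machinery the present paper packages as Proposition~\ref{Prop_TangentSpace} and Proposition~\ref{Prop_FreeSubarrIneqRk}. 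You instead identify $D(\Ac)_e$ inside $D(\Ac')_e$ as the locus $C^{\mathsf T}f=0$ and compare with the graded dimension count coming from the freeness and exponents of $\Ac$ supplied by the conclusion of Theorem~\ref{Thm_MAT}. Both computations are sound; your route is shorter and purely algebraic, never touching conditions (1) and (2) directly, but it consumes Theorem~\ref{Thm_MAT} as a black box. That is legitimate in this paper, where the MAT is a quoted external result, but it would be circular in the original setting of \cite{ABCHT16_FreeIdealWeyl}, where this lemma is a step \emph{toward} proving the MAT; the evaluation-at-$z$ argument is what makes the lemma available before freeness of $\Ac$ is known. Worth recording either way: your observation that $\rk C=r$ is forced by $\dim_\KK D(\Ac')_e-\dim_\KK D(\Ac)_e=r$ is a clean consistency check on the exponent bookkeeping.
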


With this lemma we are able to maintain a certain part of a
given basis of the derivation modules while performing MAT-steps.

\begin{lemma}\label{Lem_MATstepPartOfBasis}
    Suppose that $\Ac'$ is free with
    $\exp(\Ac') = (e_1,\ldots,e_{\ell-p}$, $e,\ldots,e)_\leq$, $e_{\ell-p} < e$, 
    and that $\Ac = \Ac' \dot{\cup} \{H_1,\ldots,H_r\}$ is an MAT-step.
    Let $\theta_1,\ldots,\theta_{\ell-p}$, 
    $\varphi_1,\ldots,\varphi_p$ be a homogeneous basis
    of $D(\Ac')$ with $\deg(\theta_i) = e_i$ for $1\leq i \leq \ell-p$ and
    $\deg(\varphi_i) = e$ for $1\leq i \leq p$.
    
    Then for each $r\leq q \leq p$ there are a subset $N \subseteq \{1,\ldots,p\}$ with $|N|=q$ 
    and $\psi_j \in D(\Ac)$, for $j \in \overline{N} = \{1,\ldots,p\}\setminus N$,
    such that $$\theta_1,\ldots,\theta_{\ell-p},\{\psi_j\}_{j \in \overline{N}},\{\varphi_i\}_{i \in N}$$
    form a basis of $D(\Ac')$.
\end{lemma}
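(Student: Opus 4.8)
The plan is to use Lemma \ref{Lem_MATstepMatrixC} to extract, from the rank-$r$ matrix $C \in \KK^{p\times r}$, enough information to replace $r$ of the $\varphi_i$ by genuine $\Ac$-derivations while keeping a spanning set. First I would apply Lemma \ref{Lem_MATstepMatrixC} to the given basis $\theta_1,\ldots,\theta_{\ell-p},\varphi_1,\ldots,\varphi_p$ of $D(\Ac')$, producing the matrix $C=(c_{ij})$ of rank $r$ and the homogeneous polynomials $b_j$ of degree $e$ with $\varphi_i(\alpha_j) \equiv c_{ij} b_j \bmod \alpha_j S$. Since $\rk(C) = r$, its columns are linearly independent; a natural move is to pass to a convenient change of basis on the $\varphi$-side. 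Concretely, choose a subset $N \subseteq \{1,\ldots,p\}$ with $|N| = q$ and complement $\overline N$ of size $p-q \le p-r$; we want to arrange that for $j \in \overline N$ we can build $\psi_j \in D(\Ac)$ out of the $\varphi_i$ killing the relevant coefficients.

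The key step is linear algebra on $C$. Because $C$ has rank $r$ and $p - q \le p - r$, one can select the index set $\overline N$ (equivalently $N$) so that the $(p\times(p-q))$-submatrix of $C$-derived data behaves well: for each $j \in \overline N$ I would like a derivation $\psi_j = \sum_{i=1}^p d_{ij}\varphi_i$ that lies in $D(\Ac)$, i.e.\ satisfies $\psi_j(\alpha_k) \in \alpha_k S$ for all $k=1,\ldots,r$. By Lemma \ref{Lem_MATstepMatrixC} this amounts to the vector $(d_{1j},\ldots,d_{pj})^{\mathsf T}$ being orthogonal to every row-evaluation, i.e.\ lying in the kernel of $C^{\mathsf T}$ — but that kernel has dimension $p - r$, which is exactly the number of $\psi_j$'s we need when $q = r$, and at least that many when $q > r$. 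So: pick a basis of $\ker(C^{\mathsf T})$ giving the coefficient vectors for the $\psi_j$, $j \in \overline N$; then pick $N$ of size $q$ among the remaining coordinate directions so that $\{\psi_j\}_{j\in\overline N} \cup \{\varphi_i\}_{i\in N}$ still spans the same $\KK$-span as $\varphi_1,\ldots,\varphi_p$ modulo $S(\theta_1+\cdots+\theta_{\ell-p})$ — this is possible precisely because the $\psi_j$-coefficient vectors together with the $q$ chosen standard vectors form an invertible $p\times p$ matrix, which can be ensured by a suitable choice of $N$ (e.g.\ via a pivot selection compatible with $\ker(C^{\mathsf T})$). With $\deg \psi_j = e$ for all $j$, the degree hypotheses of Proposition \ref{Prop_ContinueBasisD} are met, and since the transition matrix from $(\varphi_1,\ldots,\varphi_p)$ to $(\{\psi_j\}_{\overline N},\{\varphi_i\}_N)$ is invertible over $\KK$, the list $\theta_1,\ldots,\theta_{\ell-p},\{\psi_j\}_{j\in\overline N},\{\varphi_i\}_{i\in N}$ is again a basis of $D(\Ac')$.

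The main obstacle I anticipate is making the two selections — the kernel basis for the $\psi_j$'s and the index set $N$ — simultaneously consistent, so that the resulting $p\times p$ coefficient matrix over $\KK$ is genuinely invertible (not merely that each piece has the right rank). This is a finite-dimensional linear algebra statement about completing a $(p-q)$-dimensional subspace of $\KK^p$ sitting inside the $(p-r)$-dimensional space $\ker(C^{\mathsf T})$ to a full basis using $q$ of the standard basis vectors; it holds by a standard exchange/pivoting argument, but one has to be careful that the chosen $\psi_j$ really land in $D(\Ac)$ and not just in $D(\Ac')$ — this is exactly what the $\ker(C^{\mathsf T})$ condition guarantees via Lemma \ref{Lem_MATstepMatrixC}, together with Corollary \ref{Coro_degTheta} (the degree-$e$ polynomial $b_j$ has degree $|\Ac'|-|\Ac''_j| = e$, so a derivation whose image mod $\alpha_j S$ is a $\KK$-combination of the $c_{ij}b_j$ that sums to zero actually lies in $\alpha_j S$). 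Once these selections are in place, the statement follows.
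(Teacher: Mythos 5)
Your proposal is correct and follows essentially the same route as the paper: both apply Lemma \ref{Lem_MATstepMatrixC} and then perform linear algebra over $\KK$ on the rank-$r$ matrix $C$ to produce $p-q$ combinations of the $\varphi_i$ whose images vanish modulo each $\alpha_j S$ (the paper phrases this as selecting $q$ rows spanning the row space and eliminating the rest by an invertible $T \in \GL_p(\KK)$, which is the same thing as your $\ker(C^{\mathsf T})$ plus Steinitz-exchange argument), concluding via the invertibility of the transition matrix and Saito's criterion. One minor remark: the appeal to Corollary \ref{Coro_degTheta} is unnecessary and not quite applicable (the $\psi_j$ have degree exactly $e=\deg b_j$, not less); membership $\psi_j \in D(\Ac)$ follows directly because the relevant coefficient $\sum_i d_{ij}c_{ik}$ is zero.
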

\begin{proof}
    By Lemma \ref{Lem_MATstepMatrixC} there are a matrix $C = (c_{ij}) \in \KK^{p\times r}$ of rank $r$ 
    and homogeneous polynomials $b_j$ of degree $e$, for $1\leq j \leq q$, such that
    \begin{align*}
        \varphi_i(\alpha_j) \equiv c_{ij}b_j \mod \alpha_j S \quad\text{ for } 1\leq i \leq p, 1\leq j \leq r.
    \end{align*}
    So for $r\leq q \leq p$ there is a subset $N = \{i_1,\ldots,i_q\} \subseteq \{1,\ldots,p\}$ such that
    the $q$ rows $(c_{i1},\ldots,c_{ir})$, for  $i \in N$ of the matrix $C$ contain a basis of the whole row space of $C$.
    Now, by elementary row operations there is a regular matrix $T\in \GL_p(\KK)$ 
    such that the transformation of the basis given by $T$ fixes the derivations $\varphi_i$ for $i \in N$:
    \begin{align*}
        &(\varphi_1,\ldots,\varphi_p) \cdot T  \\
        = \, &(\psi_1,\ldots,\psi_{i_1-1},\varphi_{i_1},\psi_{i_1+1},\ldots,\psi_{i_q-1},\varphi_{i_q},\psi_{i_q+1},\ldots,\psi_p),
    \end{align*}
    and 
    \[
        \psi_i(\alpha_j) \equiv 0 \mod \alpha_j S \quad\text{ for all } i \in \overline{N}, 1\leq j \leq r,
    \]
    i.e.\ $\psi_i \in D(\Ac)$ for all $i \in \overline{N}$.
    We further have
    \begin{align*}
        &\det\left(M(\theta_1,\ldots,\theta_{\ell-p},\{\psi_j\}_{j \in \overline{N}},\{\varphi_i\}_{i \in N})\right) \\
        = \, &\det(T)\det\left(M(\theta_1,\ldots,\theta_{\ell-p},\varphi_1,\ldots,\varphi_p)\right).
    \end{align*}
    Since $\theta_1,\ldots,\theta_{\ell-p},\varphi_1,\ldots,\varphi_p$ form a basis 
    of $D(\Ac')$ and $\det(T) \in \KK^\times$, 
    by Theorem \ref{Thm_SaitosCriterion}, 
    $\theta_1,\ldots,\theta_{\ell-p},\{\psi_j\}_{j \in \overline{N}},\{\varphi_i\}_{i \in N}$ 
    is again a basis of $D(\Ac')$, as claimed.
\end{proof}

The following technical proposition provides a  crucial step in the proof of Theorem \ref{Thm_MATRest}.
\begin{proposition}
    \label{Prop_TwoMATsteps}
    Let $\Bc'$ be free with 
    $\exp(\Bc') = (e_1,\ldots,e_{\ell-p},e,\ldots,e)_\leq$. Suppose that 
    \begin{enumerate}[(a)] 
        \item
        $\Ac' = \Bc' \dot{\cup} \{H_1,\ldots,H_p\}$ is an MAT-step such that
        $$\exp(\Ac')= (e_1, \ldots,e_{\ell-p},e+1,\ldots,e+1)_\leq,$$
        
        \item
        $\Ac = \Ac' \dot{\cup} \{K_1$, $\ldots,K_r\}$ is an MAT-step
        such that 
        $$\exp(\Ac) = (e_1,\ldots,e_{\ell-p},e+1,\ldots,e+1,e+2,\ldots,e+2)_\leq.$$
    \end{enumerate}
    Then for each $r\leq q \leq p$ there are a $\Cc \subseteq \{H_1,\ldots,H_p\}$ with $|\Cc|=q$ and
    a basis $\theta_1,\ldots,\theta_\ell$ of $D(\Bc)$, where $\Bc = \Ac'\setminus \Cc$,
    such that $\theta_i \in D(\Ac)$ for $1\leq i \leq \ell-q$, 
    $\deg(\theta_i) = e_i$ for $1\leq i \leq \ell-p$,
    and $\deg(\theta_j) = e+1$ for $\ell-p+1 \leq j \leq \ell-q$.
\end{proposition}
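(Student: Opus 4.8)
The plan is to produce the required basis of $D(\Bc)$ by tracking homogeneous bases through \emph{both} MAT-steps and then performing one controlled change of basis. First I would analyse the first MAT-step $\Ac' = \Bc' \dot{\cup}\{H_1,\ldots,H_p\}$ with Lemma~\ref{Lem_BasisMATStep}: it yields a homogeneous basis $\theta_1,\ldots,\theta_{\ell-p},\eta_1,\ldots,\eta_p$ of $D(\Bc')$ with $\deg(\theta_i)=e_i$ and $\deg(\eta_j)=e$, such that for every $N\subseteq\{1,\ldots,p\}$ the tuple $\theta_1,\ldots,\theta_{\ell-p},\{\alpha_j\eta_j\}_{j\in\overline N},\{\eta_i\}_{i\in N}$ is a basis of $D(\Bc'\cup\{H_j\mid j\in\overline N\})$. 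Taking $N=\varnothing$ and writing $\varphi_i:=\alpha_i\eta_i$ (which is homogeneous of degree $e+1$), the tuple $\theta_1,\ldots,\theta_{\ell-p},\varphi_1,\ldots,\varphi_p$ is a homogeneous basis of $D(\Ac')$. I would then feed this basis into Lemma~\ref{Lem_MATstepPartOfBasis} applied to the second MAT-step $\Ac=\Ac'\dot{\cup}\{K_1,\ldots,K_r\}$, with $e+1$ playing the role of the ``$e$'' of that lemma. For the given $q$ with $r\le q\le p$ this produces a subset $N\subseteq\{1,\ldots,p\}$ with $|N|=q$ and homogeneous $\psi_j\in D(\Ac)$ of degree $e+1$, for $j\in\overline N$, obtained as $(\varphi_1,\ldots,\varphi_p)\cdot T$ for some $T\in\GL_p(\KK)$ whose columns indexed by $N$ are standard basis vectors. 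I set $\Cc:=\{H_i\mid i\in N\}$, so $|\Cc|=q$ and $\Bc:=\Ac'\setminus\Cc=\Bc'\cup\{H_j\mid j\in\overline N\}\subseteq\Ac$; then $\Bc$ is free with the basis $B_1:=(\theta_1,\ldots,\theta_{\ell-p},\{\varphi_j\}_{j\in\overline N},\{\eta_i\}_{i\in N})$ of $D(\Bc)$ coming from the first step.

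Two verifications remain. The first is that $\theta_1,\ldots,\theta_{\ell-p}$ already lie in $D(\Ac)$. For each $K_j$, Proposition~\ref{Prop_bPoly} applied to $\Ac'\cup\{K_j\}$ yields a $b$-polynomial of degree $|\Ac'|-|(\Ac'\cup\{K_j\})^{K_j}|=e+1$ (this is condition~(3) of the second MAT-step), which exceeds $\deg(\theta_i)=e_i\le e_{\ell-p}<e<e+1$, so Corollary~\ref{Coro_degTheta} gives $\theta_i\in D(\Ac'\cup\{K_j\})$; letting $j$ run over $1,\ldots,r$ yields $\theta_i\in D(\Ac)$. The second verification, which is the heart of the argument, is that replacing the $\varphi_j$ in $B_1$ by the $\psi_j$ again gives a basis of $D(\Bc)$. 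Here I exploit that the first MAT-step delivers the degree-$(e+1)$ derivations literally as $\varphi_i=\alpha_i\eta_i$: since the $N$-columns of $T$ are standard, for $j\in\overline N$ one has $\psi_j=\sum_{i\in\overline N}T_{ij}(\alpha_i\eta_i)+\sum_{i\in N}(T_{ij}\alpha_i)\eta_i$, an $S$-linear combination of the elements of $B_1$. The change-of-basis matrix from $B_1$ to $B_2:=(\theta_1,\ldots,\theta_{\ell-p},\{\psi_j\}_{j\in\overline N},\{\eta_i\}_{i\in N})$ is block lower triangular with diagonal blocks $I_{\ell-p}$, $(T_{ij})_{i,j\in\overline N}$ and $I_q$, hence has determinant $\det T\in\KK^\times$. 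By Saito's criterion (Theorem~\ref{Thm_SaitosCriterion}), $B_2$ is again a basis of $D(\Bc)$.

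It then remains only to read off that $B_2=(\theta_1,\ldots,\theta_\ell)$ has the asserted shape: its first $\ell-p$ entries are the $\theta_i$, of degree $e_i$ and lying in $D(\Ac)$; its next $p-q$ entries are the $\psi_j$ with $j\in\overline N$, of degree $e+1$ and lying in $D(\Ac)$, so that $\theta_i\in D(\Ac)$ for all $1\le i\le\ell-q$; and its last $q$ entries are the $\eta_i$ with $i\in N$, of degree $e$, for which no containment in $D(\Ac)$ is required. This gives the statement for the given $q$, and the construction works verbatim for every $r\le q\le p$.

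The step I expect to be the main obstacle is the compatibility in the second paragraph: one must make sure that the scalar transformation $T$ produced by the second MAT-step, which a priori only relates bases of $D(\Ac')$, descends to a change of basis of $D(\Bc)$ with unit determinant. This works exactly because the degree-$(e+1)$ generators coming out of the first MAT-step are the $\alpha_i\eta_i$, with the $\eta_i$ themselves still present in the chosen basis of $D(\Bc)$; without this structural alignment between the two MAT-steps the passage from $\varphi_j$ to $\psi_j$ would not obviously preserve the module $D(\Bc)$. A secondary (purely bookkeeping) point is keeping the two overloaded roles of ``$e$'' and ``$p$'' — the highest-exponent data of $\Bc'$ versus that of $\Ac'$ — straight when invoking Lemmas~\ref{Lem_BasisMATStep} and \ref{Lem_MATstepPartOfBasis}.
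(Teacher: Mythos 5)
Your proposal is correct and follows essentially the same route as the paper: Lemma \ref{Lem_BasisMATStep} applied to the first MAT-step, Lemma \ref{Lem_MATstepPartOfBasis} applied to the second with the basis $\theta_1,\ldots,\theta_{\ell-p},\alpha_1\eta_1,\ldots,\alpha_p\eta_p$ of $D(\Ac')$, the same choice $\Cc=\{H_i\mid i\in N\}$, the degree argument via Corollary \ref{Coro_degTheta}, and Saito's criterion at the end. The only (cosmetic) difference is the final determinant check: the paper divides $\det M(\theta_1,\ldots,\theta_{\ell-p},\{\psi_j\},\{\alpha_i\eta_i\}_{i\in N})\in\KK^\times Q(\Ac')$ by $Q(\Cc)$ via multilinearity, whereas you multiply $\det M(B_1)\in\KK^\times Q(\Bc)$ by the unit $\det T$ --- two equivalent bookkeepings of the same change of basis.
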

\begin{proof}
    Let $H_1 = \ker(\alpha_1), \ldots, H_p =  \ker(\alpha_p)$. 
    By Lemma \ref{Lem_BasisMATStep} applied to the first MAT-step $\Ac' = \Bc' \dot{\cup} \{H_1,\ldots,H_p\}$, 
     there exists a homogeneous basis 
    $\theta_1,\ldots,\theta_{\ell-p},\eta_1,\ldots,\eta_p$ of $D(\Bc')$
    with $\deg(\theta_i) = e_i$, $1\leq i \leq \ell-p$ and 
    $\deg(\eta_j) = e$, $1\leq j \leq p$ such that
    $\theta_1,\ldots,\theta_{\ell-p},\alpha_1\eta_1,\ldots,\alpha_p\eta_p$
    form a homogeneous basis of $D(\Ac')$.
    
    Note that, since $\deg \theta_i < e+1 = |\Ac'| - |(\Ac'\cup \{K_j\})^{K_j}|$ 
    (by condition (3) of Theorem \ref{Thm_MAT} within the second MAT-step),
    we have $\theta_i \in D(\Ac)$ for $1\leq i \leq \ell-p$, by Corollary \ref{Coro_degTheta}.
    
    Thanks to Lemma \ref{Lem_MATstepPartOfBasis}, applied to the second MAT-step,  
    for each $r\leq q \leq p$, there are a subset
    $N\subseteq \{1,\ldots,p\}$ with $|N|=q$ and derivations  $\psi_j \in D(\Ac)$, 
    for $j \in \overline{N} = \{1,\ldots,p\}\setminus N$ such that
    $\theta_1,\ldots,\theta_{\ell-p}$, $\{\psi_j\}_{j \in \overline{N}},\{\alpha_i\eta_i\}_{i \in N}$
    form a basis of $D(\Ac')$. 
    
    Next we define $$\Cc := \{H_i \mid i \in N\}\quad \text{ and }\quad \Bc := \Ac'\setminus \Cc,$$
    and we set
    \begin{align*}
        \{\theta_{\ell-p+1}, \ldots, \theta_{\ell-q} \} &:= \{\psi_j \mid j \in \overline{N} \}, \text{ and} \\
        \{\theta_{\ell-q+1}, \ldots, \theta_\ell \} &:= \{\eta_i \mid i \in N\}.
    \end{align*}
    
    According to Lemma \ref{Lem_BasisMATStep} applied to the first MAT-step, we observe that 
    $\theta_1,\ldots,\theta_{\ell-p},\{\alpha_j\eta_j\}_{j \in \overline{N}},\{\eta_i\}_{i \in N}$
    form a basis of $D(\Bc)$.
    In particular, we have 
    $\eta_i \in D(\Bc)$ for $i \in N$.
    
    Then, since  $Q(\Cc) = \prod_{i \in N}\alpha_i$, and by the multilinearity of the determinant and
    Theorem \ref{Thm_SaitosCriterion}, we conclude
    \begin{align*}
        \det( M(\theta_1,\ldots,\theta_\ell) ) 
        = \, &\frac{\det( M(\theta_1,\ldots,\theta_{\ell-p},\{\psi_j\}_{j \in \overline{N}},\{\alpha_i\eta_i\}_{i \in N}) )}{Q(\Cc)} \\
        &\in \, \KK^\times\frac{Q(\Ac')}{Q(\Cc)} = \KK^\times Q(\Bc).
    \end{align*}
    Consequently, since $\psi_j \in D(\Ac)  \subseteq D(\Bc)$, for $j \in \overline{N}$, all derivations $\theta_1,\ldots,\theta_\ell$ do belong to $D(\Bc)$ and thus they
    provide the desired basis of $D(\Bc)$, again by Theorem \ref{Thm_SaitosCriterion}.
\end{proof}

From the proof of Proposition \ref{Prop_TwoMATsteps} we record the following:
\begin{corollary}
    \label{Coro_TwoMATsteps}
    Let $\Ac'$, $\Ac$, $\Bc$, and $q$
    be as in Proposition \ref{Prop_TwoMATsteps}.
    Let $\Cc = \{\ker(\beta_1),\ldots,\ker(\beta_q)\}$.
    Then there is a basis $\theta_1,\ldots,\theta_{\ell-q},\eta_1,\ldots,\eta_q$ of $D(\Bc)$
    such that $\theta_1,\ldots,\theta_{\ell-q},\beta_1\eta_1,\ldots,\beta_q\eta_q$
    is a basis of $D(\Ac')$, $\deg{\theta_i} = e_i$, and $\theta_i \in D(\Ac)$ for $1\leq i \leq \ell-q$.
\end{corollary}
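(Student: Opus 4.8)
The plan is to extract Corollary \ref{Coro_TwoMATsteps} directly from the internal workings of the proof of Proposition \ref{Prop_TwoMATsteps}, since the corollary is essentially a re-indexed restatement of what was already constructed there. First I would recall the data produced in that proof: a homogeneous basis $\theta_1,\ldots,\theta_{\ell-p},\eta_1,\ldots,\eta_p$ of $D(\Bc')$ with $\deg\theta_i = e_i$ and $\deg\eta_j = e$, together with the subset $N \subseteq \{1,\ldots,p\}$ of size $q$, satisfying that $\theta_1,\ldots,\theta_{\ell-p},\alpha_1\eta_1,\ldots,\alpha_p\eta_p$ is a basis of $D(\Ac')$ and that $\theta_1,\ldots,\theta_{\ell-p},\{\alpha_j\eta_j\}_{j\in\overline N},\{\eta_i\}_{i\in N}$ is a basis of $D(\Bc)$, where $\Bc = \Ac'\setminus\Cc$ and $\Cc = \{H_i \mid i\in N\}$. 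The indices $i \in N$ are exactly those for which $H_i = \ker(\beta_{?}) \in \Cc$, so after relabelling we may write $\Cc = \{\ker(\beta_1),\ldots,\ker(\beta_q)\}$ with $\{\beta_1,\ldots,\beta_q\} = \{\alpha_i \mid i\in N\}$, and set $\eta'_1,\ldots,\eta'_q$ to be the corresponding reindexing of $\{\eta_i\}_{i\in N}$ and $\theta_{\ell-p+1},\ldots,\theta_{\ell-q}$ to be $\{\psi_j\}_{j\in\overline N}$.

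Next I would check the three asserted properties one at a time. The claim that $\theta_1,\ldots,\theta_{\ell-q},\beta_1\eta'_1,\ldots,\beta_q\eta'_q$ is a basis of $D(\Ac')$ is precisely the statement that $\theta_1,\ldots,\theta_{\ell-p},\{\psi_j\}_{j\in\overline N},\{\alpha_i\eta_i\}_{i\in N}$ is a basis of $D(\Ac')$, which was established in the proof of Proposition \ref{Prop_TwoMATsteps} via Lemma \ref{Lem_MATstepPartOfBasis}; one only needs to rename $\{\alpha_i\eta_i\}_{i\in N}$ as $\beta_1\eta'_1,\ldots,\beta_q\eta'_q$. The claim that $\theta_1,\ldots,\theta_{\ell-q},\eta'_1,\ldots,\eta'_q$ is a basis of $D(\Bc)$ is the assertion, also verified there, that $\theta_1,\ldots,\theta_{\ell-p},\{\alpha_j\eta_j\}_{j\in\overline N},\{\eta_i\}_{i\in N}$ is a basis of $D(\Bc)$ — wait, that list has $\alpha_j\eta_j$ rather than $\psi_j$ in the middle, so I must be slightly careful: the proof of Proposition \ref{Prop_TwoMATsteps} shows in its penultimate paragraph that $\theta_1,\ldots,\theta_{\ell-p},\{\alpha_j\eta_j\}_{j\in\overline N},\{\eta_i\}_{i\in N}$ is a basis of $D(\Bc)$ (from Lemma \ref{Lem_BasisMATStep} applied to the first MAT-step with subset $\overline N$), and then in its final paragraph, via the determinant computation using $Q(\Cc) = \prod_{i\in N}\alpha_i$ and Theorem \ref{Thm_SaitosCriterion}, that $\theta_1,\ldots,\theta_{\ell-p},\{\psi_j\}_{j\in\overline N},\{\eta_i\}_{i\in N}$ — i.e. $\theta_1,\ldots,\theta_{\ell-q},\eta'_1,\ldots,\eta'_q$ after renaming — is a basis of $D(\Bc)$. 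So both basis claims come verbatim from the proof. Finally, $\deg\theta_i = e_i$ for $1\leq i\leq\ell-q$ holds because $\deg\theta_i = e_i$ for $i \leq \ell-p$ by construction and $\deg\psi_j = e+1 = e_j$ for the remaining indices (this matches the exponents $(e_1,\ldots,e_{\ell-p},e+1,\ldots,e+1)_\le$ of $\Ac'$), and $\theta_i \in D(\Ac)$ for $1\leq i\leq\ell-q$ is exactly the conclusion of Proposition \ref{Prop_TwoMATsteps}.

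Given all this, the proof of the corollary is essentially bookkeeping: I would simply say that the required basis is the one constructed in the proof of Proposition \ref{Prop_TwoMATsteps}, with $\eta_1,\ldots,\eta_q$ taken to be $\{\eta_i\}_{i\in N}$ reindexed so that $\eta_j$ pairs with $\beta_j$, and that all three asserted properties were verified in the course of that proof. The only genuinely delicate point — and the one I would state carefully rather than wave at — is the compatibility of the indexing: one must make sure that the linear form $\beta_j$ cutting out the $j$-th hyperplane of $\Cc$ is matched with the correct derivation $\eta_{i_j}$ (where $N = \{i_1,\ldots,i_q\}$), so that the product $\beta_j\eta_j$ appearing in the $D(\Ac')$-basis is literally one of the $\alpha_i\eta_i$ with $i\in N$. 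Once the labelling is pinned down, nothing further is needed. I expect no real obstacle here; the substance of the corollary is entirely contained in the preceding proof, and this is a harvesting step rather than a new argument.

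\begin{proof}
    This is recorded from the construction in the proof of Proposition \ref{Prop_TwoMATsteps}.
    With the notation of that proof, write $N = \{i_1 < \cdots < i_q\} \subseteq \{1,\ldots,p\}$,
    so that $\Cc = \{H_{i_1},\ldots,H_{i_q}\}$; relabelling, we may assume
    $\ker(\beta_j) = H_{i_j}$, i.e.\ $\beta_j = \alpha_{i_j}$, and we set $\eta_j := \eta_{i_j}$ for $1\leq j\leq q$
    and $\{\theta_{\ell-p+1},\ldots,\theta_{\ell-q}\} := \{\psi_j \mid j\in\overline N\}$.
    Then $\theta_1,\ldots,\theta_{\ell-q},\beta_1\eta_1,\ldots,\beta_q\eta_q$ is exactly the list
    $\theta_1,\ldots,\theta_{\ell-p},\{\psi_j\}_{j\in\overline N},\{\alpha_i\eta_i\}_{i\in N}$,
    which was shown there (via Lemma \ref{Lem_MATstepPartOfBasis}) to be a basis of $D(\Ac')$.
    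Likewise, the determinant computation at the end of that proof, using $Q(\Cc) = \prod_{i\in N}\alpha_i$
    and Theorem \ref{Thm_SaitosCriterion}, shows that
    $\theta_1,\ldots,\theta_{\ell-p},\{\psi_j\}_{j\in\overline N},\{\eta_i\}_{i\in N}$,
    i.e.\ $\theta_1,\ldots,\theta_{\ell-q},\eta_1,\ldots,\eta_q$, is a basis of $D(\Bc)$.
    Finally $\deg\theta_i = e_i$ for $1\leq i\leq\ell-q$ holds by construction
    ($\deg\theta_i = e_i$ for $i\leq\ell-p$ and $\deg\psi_j = e+1 = e_j$ otherwise),
    and $\theta_i\in D(\Ac)$ for $1\leq i\leq\ell-q$ is the content of Proposition \ref{Prop_TwoMATsteps}.
\end{proof}
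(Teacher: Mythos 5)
Your proposal is correct and matches the paper's intent exactly: the paper offers no separate argument for this corollary, introducing it with ``From the proof of Proposition \ref{Prop_TwoMATsteps} we record the following,'' and your write-up is precisely that harvesting step, with the basis of $D(\Bc)$ coming from the final determinant computation, the basis of $D(\Ac')$ from Lemma \ref{Lem_MATstepPartOfBasis}, and the degree and membership claims read off from the construction. The care you take with the reindexing of $\{\eta_i\}_{i\in N}$ against the $\beta_j$ is the only nontrivial bookkeeping, and you handle it correctly.
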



\subsection{Restrictions of MAT-free arrangements}
\label{SSec_ProofMainRes}

For this subsection, let $\Ac$ be an MAT-free arrangement with MAT-partition $\pi = (\pi_1|\cdots|\pi_n)$.
Recall from 
Definition \ref{Def_MATfree} 
that for $k \in \{1,\ldots,n\}$ 
 we set $\Ac_k := \bigcup_{i=1}^k \pi_i$
 and $\pi_j = \emptyset$ for $j>n$.
 
Our next result is used to maintain a certain part of a basis of the module of derivations of a subarrangement
and also later to construct a particular basis of $D(\Ac)$.

\begin{lemma}
    \label{Lem_ConsecMATstepsD}
    Fix $k \in \{1,\ldots,n\}$.
    If $\theta \in D(\Ac_k)$ is homogeneous with $\deg(\theta) < k$, then $\theta \in D(\Ac)$.
\end{lemma}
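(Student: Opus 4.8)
The plan is to induct downward, stepping through the MAT-partition one block at a time from $\Ac_k$ up to $\Ac = \Ac_n$. Suppose we have already shown $\theta \in D(\Ac_m)$ for some $k \le m < n$; I want to conclude $\theta \in D(\Ac_{m+1}) = \Ac_m \dot\cup \pi_{m+1}$. The passage from $\Ac_m$ to $\Ac_{m+1}$ is an MAT-step, so I can bring in the deletion/restriction machinery of Proposition \ref{Prop_bPoly} and its Corollary \ref{Coro_degTheta}. The point is that adding the block $\pi_{m+1}$ one hyperplane at a time produces a chain of arrangements to which Corollary \ref{Coro_degTheta} applies, provided I control the degree of the relevant polynomial $b$ at each stage.

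First I would set up the inductive step precisely. Write $\pi_{m+1} = \{H_1, \ldots, H_{p_{m+1}}\}$ and consider the intermediate arrangements $\Ac_m = \Bc_0 \subsetneq \Bc_1 \subsetneq \cdots \subsetneq \Bc_{p_{m+1}} = \Ac_{m+1}$ with $\Bc_j = \Ac_m \cup \{H_1, \ldots, H_j\}$. By Lemma \ref{Lem_BasisMATStep} (applied to the MAT-step $\Ac_m \dot\cup \pi_{m+1}$), every $\Bc_j$ is free. Now I want to feed $\theta \in D(\Bc_{j-1})$ into Corollary \ref{Coro_degTheta} with $\Ac = \Bc_j$, $\Ac' = \Bc_{j-1}$, $H = H_j$, $\Ac'' = \Bc_j^{H_j} = (\Bc_{j-1} \cup \{H_j\})^{H_j}$. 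The corollary gives $\theta \in D(\Bc_j)$ as soon as $\deg(\theta) < |\Bc_{j-1}| - |\Bc_j^{H_j}|$.

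The heart of the argument is therefore the numerical estimate $|\Bc_{j-1}| - |\Bc_j^{H_j}| \ge m$ for each $1 \le j \le p_{m+1}$; combined with $\deg(\theta) < k \le m$ this closes the induction. For $j = 1$ this is exactly condition (3) in Definition \ref{Def_MATfree}: $|\Ac_m| - |(\Ac_m \cup \{H_1\})^{H_1}| = m$. For $j > 1$ I would argue that passing from $\Ac_m$ to $\Bc_{j-1}$ adds $j-1$ hyperplanes to the ambient arrangement, and each of those either contributes a genuinely new hyperplane to the restriction to $H_j$ or, at worst, is absorbed; the key observation is that the $j-1$ new hyperplanes $H_1, \ldots, H_{j-1}$ all meet $H_j$ in codimension-one subspaces of $H_j$ which are \emph{pairwise distinct and distinct from all old traces}, because $X = H_1 \cap \cdots \cap H_{p_{m+1}}$ is $p_{m+1}$-codimensional by condition (1) of the MAT-step. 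Hence $|\Bc_j^{H_j}| \le |(\Ac_m\cup\{H_j\})^{H_j}| + (j-1)$ while $|\Bc_{j-1}| = |\Ac_m| + (j-1)$, so $|\Bc_{j-1}| - |\Bc_j^{H_j}| \ge |\Ac_m| - |(\Ac_m\cup\{H_j\})^{H_j}| = m$, where the last equality is again condition (3) of Definition \ref{Def_MATfree} applied to $H_j \in \pi_{m+1}$. I expect this counting step — precisely, showing that the $j-1$ extra traces on $H_j$ are all distinct and none of them coincides with a trace coming from $\Ac_m$ — to be the main obstacle, and it is where the codimension hypothesis (1) and the "not contained in the union" hypothesis (2) of the MAT-step must be invoked carefully. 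Once the estimate is in hand, the downward induction from $\Ac_k$ to $\Ac_n = \Ac$ goes through verbatim, yielding $\theta \in D(\Ac)$.
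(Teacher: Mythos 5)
Your proof is correct, and at the top level it follows the same strategy as the paper: an induction over the MAT-steps from $\Ac_k$ up to $\Ac_n=\Ac$, with Corollary \ref{Coro_degTheta} driving the inductive step and condition (3) of Definition \ref{Def_MATfree} supplying the degree bound. The difference lies in how a single block is absorbed. You add the hyperplanes of $\pi_{m+1}$ one at a time along a chain $\Ac_m=\Bc_0\subsetneq\Bc_1\subsetneq\cdots\subsetneq\Bc_{p_{m+1}}=\Ac_{m+1}$, which forces you to estimate $|\Bc_{j-1}|-|\Bc_j^{H_j}|$ for the intermediate arrangements. The paper instead uses that membership in $D(\cdot)$ is tested one hyperplane at a time, so that $D(\Ac_{m+1})=\bigcap_{H\in\pi_{m+1}}D(\Ac_m\cup\{H\})$; it therefore applies Corollary \ref{Coro_degTheta} separately to each pair $(\Ac_m\cup\{H\},\Ac_m)$, where condition (3) gives $|\Ac_m|-|(\Ac_m\cup\{H\})^{H}|=m\ge k>\deg(\theta)$ on the nose, and no chain or counting is required. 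Two remarks on your version. First, the freeness of the intermediate $\Bc_j$ (via Lemma \ref{Lem_BasisMATStep}) is never used: Proposition \ref{Prop_bPoly} and Corollary \ref{Coro_degTheta} carry no freeness hypothesis. Second, and more importantly, the step you single out as the main obstacle --- showing that the traces $H_i\cap H_j$ for $i<j$ are pairwise distinct and distinct from the traces coming from $\Ac_m$ --- is not needed at all. You only require the upper bound $|\Bc_j^{H_j}|\le|(\Ac_m\cup\{H_j\})^{H_j}|+(j-1)$, which holds trivially because restriction can only identify traces; any coincidences make $|\Bc_j^{H_j}|$ smaller and hence the inequality $|\Bc_{j-1}|-|\Bc_j^{H_j}|\ge m$ only easier. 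So conditions (1) and (2) of the MAT-step play no role in your counting, and your argument closes as written --- it is merely longer than it needs to be.
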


\begin{proof}
    For each of the MAT-steps $\Ac_{s+1} = \Ac_s \cup \pi_{s+1}$, $k\leq s \leq n-1$,
    we have $\deg(\theta) < k \leq s = |\Ac_s| - |(\Ac_s\cup \{H\})^H|$ 
    for each $H \in \pi_{s+1}$ (by condition (3) of Theorem \ref{Thm_MAT}). Hence  $\theta \in D(\Ac_{s+1})$, by Corollary \ref{Coro_degTheta}, 
    and ultimately we get $\theta \in D(\Ac)$.
\end{proof}

Our next result provides the right setting to apply Proposition \ref{Prop_FreeSubarrIneqRk}.
\begin{proposition}\label{Prop_BasisDMAT-Part}
    Fix $k \in \{1,\ldots,n\}$.
	For each $|\pi_{k+1}|\leq q \leq |\pi_k|$
    there are a $\Cc \subseteq \pi_k$ with $|\Cc|=q$ and
    a basis $\theta_1,\ldots,\theta_\ell$
    of $D(\Ac_k \setminus \Cc)$ 
    such that $\theta_1,\ldots,\theta_{\ell-q} \in D(\Ac)$.
\end{proposition}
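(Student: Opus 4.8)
The plan is to prove Proposition \ref{Prop_BasisDMAT-Part} by downward induction on $k$, using Proposition \ref{Prop_TwoMATsteps} and its Corollary \ref{Coro_TwoMATsteps} as the engine that moves us from one MAT-step to the next while keeping track of a distinguished part of a basis. First I would settle the base case $k = n$: here $\pi_n$ is the last block, so $p_{n+1} = 0$, and for each $0 \leq q \leq p_n$ I want a $\Cc \subseteq \pi_n$ with $|\Cc| = q$ and a basis of $D(\Ac_n \setminus \Cc) = D(\Ac \setminus \Cc)$ whose first $\ell - q$ members already lie in $D(\Ac)$. Since $\Ac_n = \Ac$, this is immediate from Lemma \ref{Lem_BasisMATStep} applied to the final MAT-step $\Ac = \Ac_{n-1} \dot{\cup} \pi_n$: that lemma directly produces, for the subset $N \subseteq \{1,\ldots,p_n\}$ with $|N| = q$ and $\overline{N}$ its complement, a basis $\theta_1,\ldots,\theta_{\ell-p_n}, \{\alpha_j \eta_j\}_{j \in \overline{N}}, \{\eta_i\}_{i \in N}$ of $D(\Ac_{n-1} \cup \{H_j : j \in \overline{N}\}) = D(\Ac \setminus \Cc)$ with $\Cc = \{H_i : i \in N\}$; the first $\ell - q$ of these derivations all lie in $D(\Ac)$ because the $\theta_i$ do (their degrees are the small exponents $e_i < e+1$, so Corollary \ref{Coro_degTheta} applies along the way, or more simply they already lived in $D(\Ac_{n-1})$ and Lemma \ref{Lem_ConsecMATstepsD} finishes it) and because each $\alpha_j \eta_j \in D(\Ac_{n-1} \cup \{H_j\}) \subseteq D(\Ac)$ — wait, one must be careful here: the $\alpha_j \eta_j$ for $j \in \overline{N}$ lie in $D(\Ac)$ only if $H_j \in \Ac$, which it is, so indeed $\alpha_j \eta_j \in D(\Ac)$. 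That gives the base case.

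For the inductive step, suppose the statement holds for $k+1$; I want to prove it for $k$. Apply the inductive hypothesis with $q = p_{k+1}$ (the smallest allowed value at level $k+1$): we obtain $\Cc' \subseteq \pi_{k+1}$ with $|\Cc'| = p_{k+1}$, i.e.\ $\Cc' = \pi_{k+1}$, and a basis of $D(\Ac_{k+1} \setminus \pi_{k+1}) = D(\Ac_k)$ whose first $\ell - p_{k+1}$ members lie in $D(\Ac)$. Now the two consecutive MAT-steps $\Ac_k = \Ac_{k-1} \dot{\cup} \pi_k$ and $\Ac_{k+1} = \Ac_k \dot{\cup} \pi_{k+1}$ are exactly the configuration $(\Bc', \Ac', \Ac)$ of Proposition \ref{Prop_TwoMATsteps}, with $(\Bc', \Ac', \Ac) = (\Ac_{k-1}, \Ac_k, \Ac_{k+1})$, with the $H_i$ being the hyperplanes of $\pi_k$ and the $K_j$ the hyperplanes of $\pi_{k+1}$, and with $r = p_{k+1}$, $p = p_k$; the exponent shapes match by Remark \ref{rem:MAT-free}(b). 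Proposition \ref{Prop_TwoMATsteps} then yields, for each $p_{k+1} \leq q \leq p_k$, a subset $\Cc \subseteq \pi_k$ with $|\Cc| = q$ and a basis $\theta_1,\ldots,\theta_\ell$ of $D(\Ac_k \setminus \Cc)$ with $\theta_1,\ldots,\theta_{\ell-q} \in D(\Ac_{k+1})$. To upgrade ``$\in D(\Ac_{k+1})$'' to ``$\in D(\Ac)$'' I invoke Lemma \ref{Lem_ConsecMATstepsD}: the derivations $\theta_1,\ldots,\theta_{\ell-q}$ are homogeneous of degree $e_i$ for $i \leq \ell - p_k$ and degree $e+1$ for $\ell - p_k + 1 \leq i \leq \ell - q$, and in the MAT-partition numbering the step $\Ac_{k+1} = \Ac_k \cup \pi_{k+1}$ has ``level'' $k$ in the sense that condition (3) reads $|\Ac_k| - |(\Ac_k \cup \{H\})^H| = k$; since the exponent $e$ appearing in Proposition \ref{Prop_TwoMATsteps} equals $k$ here (by Remark \ref{rem:MAT-free}(b) the highest exponent of $\Ac_k$ is $e+1$ and the common degree added by the step is $e+1 = k$), every $\deg \theta_i \leq e+1$; but actually I need strict inequality $\deg \theta_i < k$ to apply Lemma \ref{Lem_ConsecMATstepsD} to $\theta \in D(\Ac_k)$, whereas here $\theta_i \in D(\Ac_k \setminus \Cc) \supseteq D(\Ac_{k+1})$ and I already know $\theta_i \in D(\Ac_{k+1})$ — so I instead run Lemma \ref{Lem_ConsecMATstepsD} starting from level $k+1$: $\theta_i \in D(\Ac_{k+1})$ is homogeneous with $\deg \theta_i \leq e + 1 = k < k+1$ (using $\deg \theta_i \leq e+1$ and $e+1 = k$ hence $\deg \theta_i \leq k < k+1$), so $\theta_i \in D(\Ac)$. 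This completes the induction.

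The main obstacle I anticipate is the bookkeeping around the exponent $e$ and the MAT-level $k$, specifically verifying that the degree bound coming from Proposition \ref{Prop_TwoMATsteps} (namely $\deg \theta_i \leq e+1$ for $i \leq \ell - q$) is strictly below the threshold needed to feed Lemma \ref{Lem_ConsecMATstepsD} and push the derivations all the way into $D(\Ac)$. This forces one to pin down, via Remark \ref{rem:MAT-free}(b) and an induction on the block structure, that the value $e$ in the $k$-th MAT-step is exactly $k$ — or more robustly, to observe that $\theta_i \in D(\Ac_{k+1})$ and $\deg \theta_i < $ (the common degree $e+2$ added in the step $\Ac_{k+1} \subseteq$ next, which is $\geq k+2$), so one step of Corollary \ref{Coro_degTheta} already gives $\theta_i \in D(\Ac_{k+2})$, and iterating Lemma \ref{Lem_ConsecMATstepsD} from there finishes. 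A secondary subtlety is the degenerate case $p_k = p_{k+1}$, where the range $p_{k+1} \leq q \leq p_k$ is a single point and one should check Proposition \ref{Prop_TwoMATsteps} still applies with $r = p$ — it does, since $r \leq q \leq p$ permits $r = q = p$ — and likewise the edge case where $\pi_k$ is the first block, but since we only ever need $k$ ranging so that both $\pi_k$ and $\pi_{k+1}$ exist this is unproblematic for $k \geq 2$, while for $k = 1$ one takes $\Bc' = \Ac_0 = \varnothing_\ell$, which is free with all exponents $0$, so Proposition \ref{Prop_TwoMATsteps} still applies verbatim.
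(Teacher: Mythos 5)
Your argument is correct and is essentially the paper's own proof: the case $k=n$ via Lemma \ref{Lem_BasisMATStep}, and for $k<n$ an application of Proposition \ref{Prop_TwoMATsteps} to the consecutive MAT-steps $\Ac_{k-1}\subset\Ac_k\subset\Ac_{k+1}$ followed by Lemma \ref{Lem_ConsecMATstepsD} at level $k+1$ (using $\deg\theta_i\leq k<k+1$) to promote membership from $D(\Ac_{k+1})$ to $D(\Ac)$. Note that your downward-induction framing is vacuous — the inductive hypothesis you invoke for $k+1$ is never actually used, since Proposition \ref{Prop_TwoMATsteps} builds the required basis from scratch — so this is really the same direct case analysis as in the paper.
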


\begin{proof}
	For this proof, set $p_j := |\pi_j|$ for all $j \geq 0$.
	
    First let $k=n$. 
    Then by Lemma \ref{Lem_BasisMATStep} applied to the MAT-step $\Ac = \Ac_{n-1} \cup \pi_n$, there is a basis $\theta_1,\ldots,\theta_{\ell-p_n},\eta_1,\ldots,\eta_{p_n}$
    of $D(\Ac_{n-1})$ such that the derivations $\theta_1,\ldots,\theta_{\ell-p_n}$, $\alpha_1\eta_1,\ldots,\alpha_{p_n}\eta_{p_n}$
    form a basis of $D(\Ac)$. 
    Further, for each $p_{n+1}=0\leq q \leq p_n$, set $\Cc := \{\ker(\alpha_{p_n-q+1}),\ldots,\ker(\alpha_{p_n})\} \subseteq \pi_n$, 
    and define $$\theta_{\ell-p_n+1} := \alpha_1\eta_1,\ldots,\theta_{\ell-q} :=\alpha_{p_n-q}\eta_{p_n-q} \in D(\Ac),$$ and 
    $\theta_{\ell-q+1} := \eta_{p_n-q+1},\ldots,\theta_{\ell} := \eta_{p_n}$.
    Then by yet another application of Lemma \ref{Lem_BasisMATStep}, we get that $\theta_1,\ldots,\theta_\ell$ form a basis of $D(\Ac_n\setminus \Cc)$
    such that $\theta_1,\ldots,\theta_{\ell-q} \in D(\Ac)$.
    
    Now suppose $k<n$. 
    Then for the two MAT-steps $\Ac_{k} = \Ac_{k-1} \cup \pi_k$ and $\Ac_{k+1} = \Ac_k \cup \pi_{k+1}$
    we are precisely in the situation 
    of Proposition \ref{Prop_TwoMATsteps}.
    Suppose that $\exp(\Ac_{k}) = (e_1,\ldots,e_{\ell-p_{k}},k,\ldots,k)_\leq$.
    So by the statement of the lemma, for each $p_{k+1} \leq q \leq p_k$ there
    exists a $\Cc \subseteq \pi_k$ with $|\Cc|=q$ and a basis $\theta_1,\ldots,\theta_\ell$
    of $D(\Ac_k\setminus \Cc)$ 
    such that $\theta_i \in D(\Ac_{k+1})$ for $1\leq i \leq \ell-q$, 
    $\deg(\theta_i) = e_i \leq k$ for $1\leq i \leq \ell-p_k$,
    and $\deg(\theta_j) = k$ for $\ell-p_k+1 \leq j \leq \ell-q$.
    
    Finally, since $\deg(\theta_i) < k+1$ for $1\leq i \leq \ell-q$,
    we also have $\theta_i \in D(\Ac)$
    for $1\leq i \leq \ell-q$, by Lemma \ref{Lem_ConsecMATstepsD}.
\end{proof}

The following corollary to the preceding proposition is the key to the proof of Theorem \ref{Thm_MATRest}.

\begin{corollary}
    \label{Coro_KeyResMAT}
    Let $k, q$ and $\Cc$ be as in Proposition \ref{Prop_BasisDMAT-Part}.
    Let $X \in L(\Ac)$ with $\Ac_X \cap (\Ac_k \setminus \Cc) = \varnothing$.
    Then $\rk(X) \leq q$.    
\end{corollary}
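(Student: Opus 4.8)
The plan is to feed the basis produced by Proposition \ref{Prop_BasisDMAT-Part} directly into Proposition \ref{Prop_FreeSubarrIneqRk}. So first I would fix $k$, $q$, and $\Cc \subseteq \pi_k$ with $|\Cc| = q$ as in Proposition \ref{Prop_BasisDMAT-Part}, and write $\Bc := \Ac_k \setminus \Cc$. That proposition hands us a basis $\theta_1, \ldots, \theta_\ell$ of $D(\Bc)$ with the crucial property that $\theta_1, \ldots, \theta_{\ell-q} \in D(\Ac)$. This is exactly the hypothesis on the basis in Proposition \ref{Prop_FreeSubarrIneqRk}, with the roles $\Bc \subseteq \Ac$ and $\ell - q$ many of the basis derivations lying in $D(\Ac)$.

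Second, I would check the remaining hypotheses of Proposition \ref{Prop_FreeSubarrIneqRk}. Both $\Bc = \Ac_k \setminus \Cc$ and $\Ac$ are free: $\Ac$ is MAT-free hence free by Remark \ref{rem:MAT-free}(b), and $\Bc$ is free because Proposition \ref{Prop_BasisDMAT-Part} exhibits a basis of $D(\Bc)$. And $\Bc \subseteq \Ac_k \subseteq \Ac$, so $\Bc \subseteq \Ac$. The remaining hypothesis is that $X \in L(\Ac)$ satisfies $\Ac_X \cap \Bc = \varnothing$, which is precisely the assumption $\Ac_X \cap (\Ac_k \setminus \Cc) = \varnothing$ in the statement of the corollary. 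Applying Proposition \ref{Prop_FreeSubarrIneqRk} with this $\Bc$ and the basis $\theta_1, \ldots, \theta_\ell$ then yields $\rk(X) \leq q$.

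There is essentially no obstacle here: the corollary is a direct translation of Proposition \ref{Prop_BasisDMAT-Part} into the language of Proposition \ref{Prop_FreeSubarrIneqRk}. The only point requiring a moment's care is verifying that $\Bc$ is genuinely a subarrangement of $\Ac$ (so that $D(\Ac) \subseteq D(\Bc)$ and the inclusion picture of Proposition \ref{Prop_FreeSubarrIneqRk} applies) and that $\Bc$ is free — both of which are immediate from $\Bc = \Ac_k \setminus \Cc$ with $\Ac_k \subseteq \Ac$ and from the existence of the exhibited basis. All the real work has already been done in establishing Proposition \ref{Prop_BasisDMAT-Part}, so the proof of the corollary is a one-line invocation.
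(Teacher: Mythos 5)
Your proposal is correct and follows exactly the paper's own argument: invoke Proposition \ref{Prop_BasisDMAT-Part} to obtain the basis $\theta_1,\ldots,\theta_\ell$ of $D(\Ac_k\setminus\Cc)$ with $\theta_1,\ldots,\theta_{\ell-q}\in D(\Ac)$, and then apply Proposition \ref{Prop_FreeSubarrIneqRk}. Your additional verification of the freeness and inclusion hypotheses is sound and only makes explicit what the paper leaves implicit.
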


\begin{proof}
    Owing to Proposition \ref{Prop_BasisDMAT-Part}, there is a basis $\theta_1,\ldots,\theta_\ell$
    of $D(\Ac_k \setminus \Cc)$ such that $\theta_1,\ldots,\theta_{\ell-q} \in D(\Ac)$.
    Since $X \in L(\Ac)$ with $\Ac_X \cap (\Ac_k \setminus \Cc) = \varnothing$, we readily obtain
    $\rk(X) \leq q$, according to Proposition \ref{Prop_FreeSubarrIneqRk}.
\end{proof}

Next we define a surjective map from a subarrangement to the restriction of the ambient arrangement.
\begin{lemma}
    \label{Lem_MATSurj}
    Let $k, q$ and $\Cc$ be as in Proposition \ref{Prop_BasisDMAT-Part}.
    Set $X := \cap_{H \in \Cc}H$.
    Then the map
    \begin{align*}
        \Ac_k \setminus \Cc &\to \Ac^X \\
        H' &\mapsto H'\cap X
    \end{align*}
    is surjective.
    In particular, $(\Ac_k)^X =  \Ac^X$.
\end{lemma}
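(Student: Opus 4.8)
The goal is to show that the assignment $H' \mapsto H' \cap X$ from $\Ac_k \setminus \Cc$ to $\Ac^X$ is surjective, where $X = \bigcap_{H \in \Cc} H$ and $\Cc \subseteq \pi_k$ with $|\Cc| = q$. First I would note that the map is well-defined into $\Ac^X$: for $H' \in \Ac_k \setminus \Cc$ one needs $H' \not\supseteq X$, but if $H' \supseteq X$ then $H' \in \Ac_X$, and since $H' \in \Ac_k$ this would force $H' \in \Ac_X \cap (\Ac_k \setminus \Cc)$, which is empty by Corollary~\ref{Coro_KeyResMAT}'s hypothesis setup — wait, more carefully: one must check $\Ac_X \cap (\Ac_k \setminus \Cc) = \varnothing$. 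This is exactly where the rank computation enters. By construction $\Cc \subseteq \Ac_X$, and since $\rk(X) = q = |\Cc|$ (as $\Cc$ itself has $q$ linearly independent defining forms and $X$ is cut out by them), and any $H \in \Ac_X \cap (\Ac_k \setminus \Cc)$ would add a further independent form... hmm, not automatically. The cleaner route: apply Corollary~\ref{Coro_KeyResMAT} contrapositively. Suppose $\Ac_X \cap (\Ac_k \setminus \Cc) \neq \varnothing$; I want a contradiction with $\rk(X) = q$. Actually the corollary says: if $\Ac_X \cap (\Ac_k \setminus \Cc) = \varnothing$ then $\rk(X) \leq q$ — that's the wrong direction for this. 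So instead I would argue directly that $\Ac_X = \Cc$: since $\Cc \subseteq \pi_k$ and condition (1) of Definition~\ref{Def_MATfree} gives $\rk(\pi_k) = |\pi_k|$, any subset of $\pi_k$ is "in general position," so $\rk(X) = |\Cc| = q$; and if some $H' \in \Ac_k \setminus \Cc$ contained $X$, then $\Cc \cup \{H'\} \subseteq \Ac_k$ would have all of $X$ in the intersection of $q$ hyperplanes, forcing $H'$ to be dependent on $\Cc$ — but MAT-step condition (2) at stage $k$ says $\bigcap_{H \in \pi_k} H \not\subseteq \bigcup_{H'' \in \Ac_{k-1}} H''$, and combined with the rank condition this should rule out such an $H'$ lying in $\Ac_{k-1}$; for $H' \in \pi_k \setminus \Cc$ it's immediate from $\rk(\pi_k) = |\pi_k|$. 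So $\Ac_X \cap (\Ac_k \setminus \Cc) = \varnothing$, the map is well-defined, and moreover $\Ac_X = \Cc$, which already gives $(\Ac_k)_X = \Cc = \Ac_X$, hence the "in particular" claim $(\Ac_k)^X = \Ac^X$ will follow once surjectivity onto $\Ac^X$ is known (since $(\Ac_k)^X$ is visibly a subset of $\Ac^X$ and the map factors through $(\Ac_k)^X$).

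**Surjectivity.** Let $Y = H \cap X \in \Ac^X$ for some $H \in \Ac \setminus \Ac_X$; I must produce $H' \in \Ac_k \setminus \Cc$ with $H' \cap X = Y$. Equivalently, I must find such an $H'$ with $H' \supseteq Y$ and $H' \not\supseteq X$. Here $Y \in L(\Ac)$ has $\rk(Y) = q+1$ (it is properly contained in $X$, which has rank $q$, and is cut by one more hyperplane $H$). Now apply Corollary~\ref{Coro_KeyResMAT} in contrapositive form: since $\rk(Y) = q + 1 > q$, we cannot have $\Ac_Y \cap (\Ac_k \setminus \Cc) = \varnothing$. Therefore there exists $H' \in \Ac_Y \cap (\Ac_k \setminus \Cc)$, i.e.\ $H' \in \Ac_k \setminus \Cc$ with $H' \supseteq Y$. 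It remains to check $H' \not\supseteq X$, equivalently $H' \notin \Ac_X$: this is immediate since $H' \in \Ac_k \setminus \Cc$ and we showed $\Ac_X \cap (\Ac_k \setminus \Cc) = \varnothing$. Then $H' \cap X$ is an element of $\Ac^X$ containing $Y = H \cap X$; but both have rank $q+1$ inside the rank-$q$ flat $X$... actually I need $H' \cap X = Y$ exactly, not just $\supseteq$. Since $Y \subseteq H' \cap X \subseteq X$ and $\dim Y = \dim X - 1$ while $H' \cap X \subsetneq X$ (because $H' \notin \Ac_X$), we get $\dim(H' \cap X) = \dim X - 1 = \dim Y$, hence $H' \cap X = Y$. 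This completes surjectivity.

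**Main obstacle.** The delicate point is the bookkeeping on ranks and the precise extraction of $\Ac_X = \Cc$ from the MAT-partition axioms — in particular making airtight that no hyperplane of $\Ac_k \setminus \Cc$ (neither from $\pi_k \setminus \Cc$ nor from $\Ac_{k-1}$) can contain the rank-$q$ flat $X$. For $H' \in \pi_k \setminus \Cc$ this is Definition~\ref{Def_MATfree}(1). For $H' \in \Ac_{k-1}$ one should use that $X \supseteq \bigcap_{H \in \pi_k} H$ is false in general — rather $X$ is cut by the subset $\Cc \subseteq \pi_k$, so $X \supseteq \bigcap_{H\in\pi_k}H$, and by Definition~\ref{Def_MATfree}(2) this big intersection is not covered by $\Ac_{k-1}$; a point of $\bigcap_{H \in \pi_k}H$ avoiding all of $\Ac_{k-1}$ then witnesses $X \not\subseteq H'$ for every $H' \in \Ac_{k-1}$. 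The second, more substantive obstacle is invoking Corollary~\ref{Coro_KeyResMAT} in the contrapositive direction twice (once to rule out $\Ac_X \cap (\Ac_k \setminus \Cc) \neq \varnothing$ would be the *wrong* direction, so that part must be done by hand as above; once genuinely, for $Y$) — one must keep straight which flats the corollary applies to and that the hypothesis "$k, q, \Cc$ as in Proposition~\ref{Prop_BasisDMAT-Part}" is exactly what is in force here, so the rank bound $\rk(\cdot) \le q$ is available for *any* flat disjoint from $\Ac_k \setminus \Cc$, and thus its failure for $Y$ forces the desired $H'$.
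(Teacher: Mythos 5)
Your proof is correct and follows essentially the same route as the paper: apply Corollary~\ref{Coro_KeyResMAT} in the contrapositive to the rank-$(q+1)$ flat $Y=\tilde H\cap X$ to produce $H'\in\Ac_Y\cap(\Ac_k\setminus\Cc)$, and use conditions (1) and (2) of the MAT-step to see that $(\Ac_k\setminus\Cc)_X=\varnothing$, so that $H'\cap X=Y$; you merely spell out the latter point, which the paper states tersely. The only quibble is your parenthetical assertion that $\Ac_X=\Cc$: what your argument actually yields is $\Ac_X\cap\Ac_k=\Cc$ (hyperplanes from later blocks $\pi_m$, $m>k$, are not addressed), but this weaker statement is all you in fact use for the ``in particular'' claim, so the proof stands.
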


\begin{proof}
    Set $\Bc := \Ac_k\setminus \Cc$.
    Let $Y = \tilde{H} \cap X \in \Ac^X$ for some $\tilde{H} \in \Ac$. In particular,
    $\rk(Y) = \rk(X)+1 = q +1 > q$. 
    Thus we have $\Ac_Y \cap \Bc \neq \varnothing$, by Corollary \ref{Coro_KeyResMAT}.
    Further, by the conditions (1) and (2) of Theorem \ref{Thm_MAT}, we have $\Bc_X = \varnothing$.
    So there is an $H' \in (\Ac_Y \setminus \Ac_X) \cap \Bc$
    such that $Y = H' \cap X$.
\end{proof}

Armed with the various results about MAT-steps from above, we are finally in a position to attack 
Theorem \ref{Thm_MATRest}.

\begin{theorem}\label{Thm_MATRest_sec}
    Let $\Ac$ be an MAT-free arrangement with MAT-partition $\pi = (\pi_1 | \cdots | \pi_n )$
    and exponents $\exp(\Ac) = (e_1,\ldots,e_\ell)_\leq$.
    Then for each $1 \leq k \leq n$ and each $|\pi_{k+1}| \leq q \leq |\pi_k|$ there is a $\Cc \subseteq \pi_k$ with $|\Cc|=q$
    such that for $X := \cap_{H \in \Cc}H$ 
    the restriction $\Ac^{X}$ is free with exponents $$\exp\left(\Ac^{X}\right) = (e_1,\ldots,e_{\ell-q})_\leq.$$

    Furthermore, there is a basis $\theta_1,\ldots,\theta_\ell$ of $D(\Ac)$
    such that $\theta^X_1, \ldots, \theta^X_{\ell-q}$ is
    a basis of $D(\Ac^{X})$.
\end{theorem}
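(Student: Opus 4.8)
The plan is to combine the two ingredients assembled in Section \ref{SSec_ProofMainRes}: the surjection from $\Ac_k\setminus\Cc$ onto $\Ac^X$ (Lemma \ref{Lem_MATSurj}) and the rank bound from Corollary \ref{Coro_KeyResMAT}, which together pin down $\Ac^X$ as a restriction whose freeness and exponents are already known via the MAT-step machinery. First I would fix $k$ and $q$ with $p_{k+1}\leq q\leq p_k$, invoke Proposition \ref{Prop_BasisDMAT-Part} (equivalently its proof, via Proposition \ref{Prop_TwoMATsteps} and Corollary \ref{Coro_TwoMATsteps}) to obtain the set $\Cc=\{\ker(\beta_1),\ldots,\ker(\beta_q)\}\subseteq\pi_k$ together with a basis $\theta_1,\ldots,\theta_{\ell-q},\eta_1,\ldots,\eta_q$ of $D(\Ac_k\setminus\Cc)$ such that $\theta_1,\ldots,\theta_{\ell-q}\in D(\Ac)$, $\deg\theta_i=e_i$, and $\beta_1\eta_1,\ldots,\beta_q\eta_q$ complete $\theta_1,\ldots,\theta_{\ell-q}$ to a basis of $D(\Ac_k)$ (this is exactly the content of Corollary \ref{Coro_TwoMATsteps}, with the boundary case $k=n$ coming directly from Lemma \ref{Lem_BasisMATStep}). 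Set $X:=\cap_{H\in\Cc}H$, so $\rk(X)=q$ by condition (1) of the MAT-step defining $\pi_k$.

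For the exponents: by Lemma \ref{Lem_MATSurj} we have $(\Ac_k)^X=\Ac^X$, so it suffices to compute $\exp((\Ac_k)^X)$. Here I would apply Corollary \ref{Coro_MAT-stepRestriction} — or, if $k<n$, observe that $\Ac_k\setminus\Cc$ and $\Ac_k$ are related by the MAT-step of Proposition \ref{Prop_TwoMATsteps} data so that a successive application of Proposition \ref{Prop_AddDelRestr} along the $q$ deleted hyperplanes $\ker(\beta_1),\ldots,\ker(\beta_q)$ is valid (each intermediate arrangement $\Ac_k\setminus\Cc\cup\{\ker(\beta_1),\ldots,\ker(\beta_j)\}$ is free by Lemma \ref{Lem_BasisMATStep}). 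This yields that $(\Ac_k)^X$ is free with $\exp((\Ac_k)^X)=(e_1,\ldots,e_{\ell-q})_\leq$, using that the top $q$ exponents of $\Ac_k$ equal $k$ and drop off under restriction while the remaining exponents $e_1,\ldots,e_{\ell-q}$ are unchanged; note $e_{\ell-q}\leq k$ so the resulting tuple is genuinely the initial segment in increasing order.

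For the basis statement: I would restrict the derivations $\theta_1,\ldots,\theta_{\ell-q}\in D(\Ac)$ to $X$, obtaining $\theta_1^X,\ldots,\theta_{\ell-q}^X\in D(\Ac^X)$ with $\deg\theta_i^X=e_i$ (whenever nonzero). To see they form a basis, work at a point $z\in M(\Ac^X)\cap M(\Ac_k\setminus\Cc)$ — which is nonempty since $\Ac_X\cap(\Ac_k\setminus\Cc)=\varnothing$ by Lemma \ref{Lem_MATSurj}'s proof (namely $\Bc_X=\varnothing$) — and use Corollary \ref{Coro_TangentSpace} applied to the basis $\theta_1,\ldots,\theta_{\ell-q},\beta_1\eta_1,\ldots,\beta_q\eta_q$ of $D(\Ac_k)$: the evaluations $\rho_z(\theta_1),\ldots,\rho_z(\theta_{\ell-q}),\rho_z(\beta_i\eta_i)$ are a basis of $T_{V,z}$, but $\rho_z(\beta_i\eta_i)=\beta_i(z)\rho_z(\eta_i)$ lies outside $T_{X,z}$ in the $\beta_i$-direction (since $z\notin\ker\beta_i$), forcing $\rho_z(\theta_1),\ldots,\rho_z(\theta_{\ell-q})$ to be linearly independent inside the $(\ell-q)$-dimensional space $T_{X,z}$, hence a basis of it; in particular the $\theta_i^X$ are $S^X$-independent and each is nonzero of degree $e_i$. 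Since $\exp(\Ac^X)=(e_1,\ldots,e_{\ell-q})_\leq$, Proposition \ref{Prop_ContinueBasisD} shows $\theta_1^X,\ldots,\theta_{\ell-q}^X$ extends to a basis of $D(\Ac^X)$, but having $\ell-q=\rk(\Ac^X)$ many independent elements of the correct degrees it already is one. Finally, extending $\theta_1,\ldots,\theta_{\ell-q}$ inside $D(\Ac)$ to a full basis $\theta_1,\ldots,\theta_\ell$ via Proposition \ref{Prop_ContinueBasisD} gives the asserted basis of $D(\Ac)$.

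The main obstacle I anticipate is the bookkeeping around the restriction-of-exponents step: one must make sure the successive Addition–Deletion argument is legitimately applicable — i.e. that all the intermediate arrangements obtained by re-adding the hyperplanes of $\Cc$ to $\Ac_k\setminus\Cc$ are free with the exponents behaving as in Proposition \ref{Prop_AddDelRestr} — and that the boundary case $k=n$ (where there is no "second MAT-step" and Proposition \ref{Prop_TwoMATsteps} does not literally apply) is handled separately via Lemma \ref{Lem_BasisMATStep}, exactly as in the proof of Proposition \ref{Prop_BasisDMAT-Part}. Everything else is a direct assembly of the lemmas already proved.
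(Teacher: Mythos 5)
Your overall architecture coincides with the paper's: choose $\Cc$ and the basis $\theta_1,\ldots,\theta_{\ell-q},\eta_1,\ldots,\eta_q$ of $D(\Ac_k\setminus\Cc)$ via Proposition \ref{Prop_BasisDMAT-Part} and Corollary \ref{Coro_TwoMATsteps} (with the boundary case $k=n$ handled by Lemma \ref{Lem_BasisMATStep}), get the exponents of $(\Ac_k)^X$ from Corollary \ref{Coro_MAT-stepRestriction}, identify $\Ac^X=(\Ac_k)^X$ by Lemma \ref{Lem_MATSurj}, and finally extend $\theta_1,\ldots,\theta_{\ell-q}$ to a basis of $D(\Ac)$ by Proposition \ref{Prop_ContinueBasisD}. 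The one place you genuinely diverge is the verification that $\theta_1^X,\ldots,\theta_{\ell-q}^X$ is a basis of $D(\Ac^X)$: the paper invokes the argument of \cite[Thm.~4.46]{OrTer92_Arr} inductively along the $q$ linearly independent forms $\beta_1,\ldots,\beta_q$, whereas you run the evaluation-map machinery of Section \ref{SSec_tangent} at a point $z\in M(\Ac^X)$. That substitution is legitimate and arguably more self-contained, since it reuses Proposition \ref{Prop_TangentSpace} rather than an external theorem.

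However, your justification of the key independence claim contains an error. Since $M(\Ac^X)\subseteq X=\bigcap_i\ker(\beta_i)$, any $z\in M(\Ac^X)$ satisfies $\beta_i(z)=0$; your parenthetical ``since $z\notin\ker\beta_i$'' is false, and $\rho_z(\beta_i\eta_i)=\beta_i(z)\rho_z(\eta_i)$ does not stick out of $T_{X,z}$ --- it vanishes. The correct (and simpler) version of your argument: by Proposition \ref{Prop_TangentSpace} applied to $\Ac_k$ one has $\rho_z(D(\Ac_k))=T_{X,z}$, so the evaluations of the generating set $\theta_1,\ldots,\theta_{\ell-q},\beta_1\eta_1,\ldots,\beta_q\eta_q$ of $D(\Ac_k)$ span $T_{X,z}$; as the last $q$ of these evaluations are zero, the $\ell-q$ vectors $\rho_z(\theta_1),\ldots,\rho_z(\theta_{\ell-q})$ already span the $(\ell-q)$-dimensional space $T_{X,z}$ and hence form a basis of it. From there your conclusion (the $\theta_i^X$ are nonzero, $S^X$-independent of degrees $e_i$, hence a basis by degree count against $\exp(\Ac^X)=(e_1,\ldots,e_{\ell-q})_\leq$) goes through unchanged. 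With that one-line repair the proof is complete and faithful to the paper's strategy.
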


\begin{proof}
    First note that $\Ac_k = \Ac_{k-1} \cup \pi_k$ is an MAT-step.
    Thanks to Corollary \ref{Coro_MAT-stepRestriction} the restriction $(\Ac_{k})^{X}$ is
    free with $\exp\left((\Ac_{k})^{X}\right) = (e_1,\ldots,e_{\ell-q})_\leq$.
    Now we also have $\Ac^{X} = (\Ac_{k})^{X}$, by Lemma \ref{Lem_MATSurj}, 
    and so the first statement follows.
    
    Now suppose $\Cc = \{\ker(\beta_1),\ldots,\ker(\beta_q)\}$.
    To derive the existence of the particular basis compatible with restriction,
    observe that there is a basis $\theta_1,\ldots,\theta_{\ell-q},\eta_1,\ldots,\eta_q$
    of $D(\Ac_{k}\setminus \Cc)$ with $\deg(\theta_i) = e_i$, $\theta_i \in D(\Ac_{k+1})$ for $1\leq i \leq \ell-q$
    so that $\theta_1,\ldots,\theta_{\ell-q},\beta_1\eta_1,\ldots,\beta_q\eta_q$ form a basis of $D(\Ac_k)$, owing to Corollary \ref{Coro_TwoMATsteps}. 
    Thanks to condition (1) in Theorem \ref{Thm_MAT} the linear forms $\beta_1,\ldots,\beta_q$
    are linearly independent. 
    Using the same arguments as the ones in the proof of \cite[Thm.~4.46]{OrTer92_Arr}
    successively along with a simple induction on $q$, the restrictions $\theta_i^X$ of the $\theta_i$ to $X$
    for $1\leq i \leq \ell-q$
    yield a basis of $D(\Ac_k^X) = D(\Ac^X)$.
    
    Considering the degrees of the $\theta_i$, 
    we have $\theta_1,\ldots,\theta_{\ell-q} \in D(\Ac)$, by Lemma \ref{Lem_ConsecMATstepsD}.
    Now thanks to Proposition \ref{Prop_ContinueBasisD}, we may extend $\theta_1,\ldots,\theta_{\ell-q}$
    to a complete basis $\theta_1,\ldots,\theta_{\ell-q},\theta_{\ell-q+1},\ldots,\theta_\ell$
    of $D(\Ac)$.
    This finishes the proof.
\end{proof}

\begin{corollary}[Theorem \ref{Thm_MATRest}]
    MAT-free arrangements are accurate.
\end{corollary}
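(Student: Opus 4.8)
The plan is to deduce the corollary directly from Theorem~\ref{Thm_MATRest_sec}, which has just been proved. Given an MAT-free arrangement $\Ac$, fix an MAT-partition $\pi = (\pi_1|\cdots|\pi_n)$ and write $\exp(\Ac) = (e_1,\ldots,e_\ell)_\leq$. By Remark~\ref{rem:MAT-free}(b), $\Ac$ is free, so it remains to check the restriction condition from Definition~\ref{Def_TF}: for every $1 \leq d \leq \ell$ we must exhibit an $X \in L(\Ac)$ with $\dim X = d$ such that $\Ac^X$ is free with $\exp(\Ac^X) = (e_1,\ldots,e_d)_\leq$. Setting $q := \ell - d$, this is exactly the assertion of Theorem~\ref{Thm_MATRest_sec} once we know that $q$ arises as a value in one of the ranges $[p_{k+1}, p_k]$ for some $1 \leq k \leq n$.

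First I would handle the boundary cases: $q = 0$ (i.e.\ $d = \ell$) is trivial, taking $X = V$ and $\Ac^X = \Ac$. For $1 \leq q \leq \ell - 1$ I would argue that the intervals $[p_{k+1}, p_k]$, $k = 1,\ldots,n$, cover $\{1,\ldots,\ell-1\}$. Indeed, by Remark~\ref{rem:MAT-free}(c) we have $p_1 > p_2 \geq \cdots \geq p_n \geq 1$ and $p_{n+1} = 0$, so the union $\bigcup_{k=1}^n [p_{k+1},p_k]$ is the interval $[p_{n+1}+1, p_1] = [1, p_1]$ with possibly some collapsed pieces but no gaps, since consecutive intervals $[p_{k+2},p_{k+1}]$ and $[p_{k+1},p_k]$ overlap at $p_{k+1}$. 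It remains to check $p_1 \geq \ell - 1$, equivalently $|\pi_1| \geq \ell - 1$; this follows from $\rk(\pi_1) = |\pi_1|$ together with $\rk(\pi_1) \geq \ell - 1$, which holds because $\pi_1$ consists of hyperplanes through a common point of codimension $|\pi_1|$ in an $\ell$-dimensional space — actually one sees $\rk(\Ac_1) = |\pi_1|$ and since $\Ac_1 \subseteq \Ac$ has $\rk(\Ac) \leq \ell$ we only get $|\pi_1| \leq \ell$; the point $d=1$ corresponds to $q = \ell-1$, and if $\Ac$ is essential then $e_\ell < \ell$ forces enough blocks. More cleanly: for an essential arrangement the largest exponent $e_\ell$ equals $n$ if $|\pi_n| = 1$, and $p_1 \geq \ell - 1$ is equivalent to $e_2 = e_3 = \cdots$ being achievable, which is precisely the content needed; if $\Ac$ is not essential one passes to its essentialization, which is MAT-free with the same nonzero exponents, and invokes Remark~\ref{rem_MATRest}(v).

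Once the covering claim is established, for each $d$ pick $k$ with $q = \ell - d \in [p_{k+1}, p_k]$ and apply Theorem~\ref{Thm_MATRest_sec} to obtain $\Cc \subseteq \pi_k$ with $|\Cc| = q$ and $X = \bigcap_{H \in \Cc} H$; by condition~(1) of Definition~\ref{Def_MATfree} the hyperplanes in $\Cc$ meet in codimension exactly $q$, so $\dim X = \ell - q = d$, and Theorem~\ref{Thm_MATRest_sec} gives freeness of $\Ac^X$ with $\exp(\Ac^X) = (e_1,\ldots,e_{\ell-q})_\leq = (e_1,\ldots,e_d)_\leq$, as required. This verifies Definition~\ref{Def_TF} and completes the proof.

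The only genuinely delicate point is the combinatorial covering argument, and within it the inequality $p_1 \geq \ell - 1$ (equivalently, that the block sizes of the MAT-partition are large enough that the dual-partition formula in Remark~\ref{rem:MAT-free}(b) produces every index down to $d = 1$). I expect the cleanest route is to reduce to the essential case via Remark~\ref{rem_MATRest}(v): if $\Ac$ has rank $r < \ell$ then $\Ac = \Ac_0 \times \varnothing_{\ell-r}$ with $\Ac_0$ essential and MAT-free, and accuracy of a product reduces to accuracy of the factors, so we may assume $\rk(\Ac) = \ell$; then $\bigcap_{H \in \Ac} H = \{0\}$, and one checks directly from Definition~\ref{Def_MATfree} that $\rk(\Ac_n) = \ell$ forces the block sizes to exhaust all codimensions up to $\ell$, giving $p_1 + (\text{number of blocks of each smaller size})$ the right shape. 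Everything else is a direct unwinding of definitions.
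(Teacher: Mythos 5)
Your proposal is correct and follows the paper's own route: the Corollary is stated there as an immediate consequence of Theorem \ref{Thm_MATRest_sec}, and your covering argument (the ranges $[p_{k+1},p_k]$ overlap at their shared endpoints and exhaust $\{0,\ldots,p_1\}$, while condition (1) of Definition \ref{Def_MATfree} guarantees $\dim X=\ell-q=d$) is exactly the unwinding left implicit in the text. The one step you leave at the level of ``one checks'' --- that $p_1=\ell$ for essential $\Ac$ --- closes at once via Remark \ref{rem:MAT-free}(b): an essential free arrangement has $e_1\geq 1$, hence $|\{k \mid |\pi_k|\geq \ell\}|\geq 1$, forcing $|\pi_1|=\ell$ since also $|\pi_1|=\rk(\pi_1)\leq\ell$; the non-essential case is then the product reduction you describe.
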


\subsection{A generalization of Theorem \ref{Thm_MATRest}}
\label{SSec_MainResGen}

In this section, we record an extension of Theorem \ref{Thm_MATRest_sec}.
It can be applied to obtain the accuracy of not necessarily MAT-free arrangements (which we do in Section \ref{SSec_IdealShiCatalan}).
Instead of considering MAT-free arrangements
which are build up from the empty arrangement using MAT-steps,
we can more generally study arrangements which are build up
from some suitable free arrangement (which need not be empty) via MAT-steps.
We formalize this in the following manner.

Let $\Ac' = (\Ac', V)$ be a free arrangement with exponents 
$\exp(\Ac') = (e_1,\ldots,e_{\ell-p},e,\ldots,$ $e)_\leq \in \ZZ^{\ell}$. 
Let $\Bc = (\Bc, V)$ be an arrangement in $V$ disjoint from $\Ac$ and partitioned by
$\pi = (\pi_1|\cdots|\pi_n)$. 
Set $\Ac_0 := \Ac'$,
\[
    \Ac_k := \Ac'\dot{\cup} (\cup_{i=1}^k \pi_i),
\]
and $\pi_j = \emptyset$ for $j>n$.
Suppose that $\Ac_k \cup \pi_{k+1}$ is an MAT-step for $0\leq k \leq n-1$.
Then $\Ac = \Ac' \cup \Bc$ is free by a successive application of Theorem \ref{Thm_MAT}.
In this setting, all arguments from Sections \ref{SSec_ProofMainTechPrelim} and \ref{SSec_ProofMainRes} apply 
and Theorem \ref{Thm_MATRest_sec} directly generalizes
in the following fashion.

\begin{theorem}
    \label{Thm_MATRestGen}
    Let $\Ac = \Ac' \dot{\cup} \Bc$ be a free arrangement obtained from the free arrangement $\Ac'$
    through MAT-steps with exponents $\exp(\Ac) = (e_1,\ldots,e_\ell)_\leq$. 
    Suppose that $\pi = (\pi_1 | \cdots | \pi_n )$ is the corresponding
    ordered partition of $\Bc$. 
    Then for each $1 \leq k \leq n$ and each $|\pi_{k+1}| \leq q \leq |\pi_k|$ there is a $\Cc \subseteq \pi_k$ with $|\Cc|=q$
    such that for $X := \cap_{H \in \Cc}H$ 
    the restriction $\Ac^{X}$ is free with exponents 
    \[
        \exp\left(\Ac^{X}\right) = (e_1,\ldots,e_{\ell-q})_\leq.
    \]

    Furthermore, there is a basis $\theta_1,\ldots,\theta_\ell$ of $D(\Ac)$
    such that $\theta^X_1, \ldots, \theta^X_{\ell-q}$ is
    a basis of $D(\Ac^{X})$.
\end{theorem}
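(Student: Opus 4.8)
The plan is to observe that the only feature of the empty arrangement $\varnothing_\ell$ used in Sections \ref{SSec_ProofMainTechPrelim} and \ref{SSec_ProofMainRes} is that $\Ac_0$ is free with known exponents, so the entire argument goes through verbatim with $\Ac_0 := \Ac'$ in place of $\varnothing_\ell$. Concretely, I would first note that the hypotheses guarantee that $\Ac_k \cup \pi_{k+1}$ is an MAT-step for each $0 \le k \le n-1$, and that by a successive application of Theorem \ref{Thm_MAT} each $\Ac_k$ is free with exponents determined inductively; in particular $\Ac = \Ac_n$ is free with $\exp(\Ac) = (e_1,\ldots,e_\ell)_\le$. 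The crucial technical inputs --- Lemma \ref{Lem_BasisMATStep}, Lemma \ref{Lem_MATstepMatrixC}, Lemma \ref{Lem_MATstepPartOfBasis}, Proposition \ref{Prop_TwoMATsteps} and its Corollary \ref{Coro_TwoMATsteps} --- are all statements about a single MAT-step $\Bc' \mathbin{\dot\cup}\{H_1,\ldots,H_r\}$ or about two consecutive MAT-steps, and none of them refers to the starting point of the chain. Likewise Lemma \ref{Lem_ConsecMATstepsD} only uses condition (3) of the MAT-steps $\Ac_{s+1} = \Ac_s \cup \pi_{s+1}$ for $k \le s \le n-1$, which holds here by assumption. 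Hence all of these results apply unchanged to the present chain.

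Next I would re-run the proof of Proposition \ref{Prop_BasisDMAT-Part} in this setting: for the base case $k = n$ one applies Lemma \ref{Lem_BasisMATStep} to the last MAT-step $\Ac = \Ac_{n-1}\cup\pi_n$ exactly as before, and for $k < n$ one is in the situation of Proposition \ref{Prop_TwoMATsteps} applied to the two consecutive MAT-steps $\Ac_k = \Ac_{k-1}\cup\pi_k$ and $\Ac_{k+1} = \Ac_k\cup\pi_{k+1}$, together with Lemma \ref{Lem_ConsecMATstepsD} to push the low-degree derivations all the way up to $D(\Ac)$. This yields, for each $p_{k+1}\le q\le p_k$, a subset $\Cc\subseteq\pi_k$ with $|\Cc| = q$ and a basis $\theta_1,\ldots,\theta_\ell$ of $D(\Ac_k\setminus\Cc)$ with $\theta_1,\ldots,\theta_{\ell-q}\in D(\Ac)$. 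Corollary \ref{Coro_KeyResMAT} then follows from Proposition \ref{Prop_FreeSubarrIneqRk}, and Lemma \ref{Lem_MATSurj} shows $(\Ac_k)^X = \Ac^X$ for $X := \cap_{H\in\Cc}H$ --- here one should double-check that the step "$\Bc_X = \varnothing$" in the proof of Lemma \ref{Lem_MATSurj}, which used conditions (1) and (2) of the MAT-step $\Ac_k = \Ac_{k-1}\cup\pi_k$, still holds; it does, since that MAT-step is part of our chain by hypothesis, and moreover $\Cc\subseteq\pi_k$ is disjoint from $\Ac'$, so no hyperplane of $\Ac'$ contains $X$ either. With these in hand, the conclusion follows precisely as in the proof of Theorem \ref{Thm_MATRest_sec}: Corollary \ref{Coro_MAT-stepRestriction} gives that $(\Ac_k)^X$ is free with $\exp = (e_1,\ldots,e_{\ell-q})_\le$, and Corollary \ref{Coro_TwoMATsteps} together with the restriction-map argument from the proof of \cite[Thm.~4.46]{OrTer92_Arr} and Proposition \ref{Prop_ContinueBasisD} produces the desired basis $\theta_1,\ldots,\theta_\ell$ of $D(\Ac)$ restricting to a basis of $D(\Ac^X)$.

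The main thing to be careful about --- rather than a genuine obstacle --- is bookkeeping at the bottom of the chain: when $k = 1$ the "previous" arrangement $\Ac_0 = \Ac'$ is no longer empty, so its exponents are $(e_1,\ldots,e_{\ell-p},e,\ldots,e)_\le$ rather than $(0,\ldots,0)$, and one must make sure that Proposition \ref{Prop_TwoMATsteps} is invoked with the correct exponent data (this is exactly why Proposition \ref{Prop_TwoMATsteps} was stated with a general free $\Bc'$ rather than specializing $e = k-1$). One also has to confirm that Remark \ref{rem:MAT-free}(c), the strict-then-weak decrease of block sizes, is not needed anywhere in the restriction argument --- it is not; only the relation $p_{k+1}\le p_k$ is used, and that is built into the range $p_{k+1}\le q\le p_k$. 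Since every cited lemma was deliberately phrased for an arbitrary free starting arrangement, there is nothing further to prove, and the theorem follows.
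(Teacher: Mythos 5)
Your proposal is correct and takes essentially the same route as the paper, whose entire ``proof'' of Theorem \ref{Thm_MATRestGen} is the one-line assertion that all arguments of Sections \ref{SSec_ProofMainTechPrelim} and \ref{SSec_ProofMainRes} apply verbatim with $\Ac_0:=\Ac'$ in place of $\varnothing_\ell$; your step-by-step verification (including the observation that the numerical bound in Lemma \ref{Lem_ConsecMATstepsD} and the exponent data in Proposition \ref{Prop_TwoMATsteps} must be shifted by the top exponent $e$ of $\Ac'$) supplies exactly the bookkeeping the paper leaves implicit.
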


As a corollary to Theorem \ref{Thm_MATRestGen} we get the following sufficient condition
for accuracy.

\begin{corollary}
	\label{Coro_MATRestGen}
	Let $\Ac$, $\Bc$ and $\pi$ be as in Theorem \ref{Thm_MATRestGen}.
	If $|\pi_1| \geq \ell-2$ then $\Ac$ is accurate.
\end{corollary}

We conclude this section with an example 
which shows that the ordered partition $\pi = (\pi_1 | \cdots | \pi_n )$ of the subarrangement $\Bc \subseteq \Ac$
in Theorem \ref{Thm_MATRestGen}  
is itself not an MAT-partiton for $\Bc$ and $\Bc$ does not even need to be free, in general.

\begin{example}
	\label{Rem_BSubarr}
	Let $\Ac$ be the $3$-arrangement with defining polynomial
	\[
		Q(\Ac) = x_1x_2x_3(x_1-x_2)(x_1-x_3)(x_2-x_3)(x_2+x_3).
	\]
	In particular, $\Ac$ is an ideal subarrangement of the Weyl arrangement of type $B_3$, see Section \ref{SSec_WeylIdeal}.
	Then $\Ac' = \{\ker(x_1-x_2), \ker(x_2-x_3), \ker(x_3)\}$, $\Bc = \Ac \setminus \Ac'$
	and $\pi = (\pi_1,\pi_2)$ with
	\begin{align*}
		\pi_1 &= \{ \ker(x_1-x_3), \ker(x_2)\}, \\
		\pi_2 &= \{ \ker(x_2+x_3), \ker(x_1) \},
	\end{align*}
	apparently satisfy all the assumptions of Theorem \ref{Thm_MATRestGen}.
	But here, $\Bc$ is a generic $3$-arrangement with four hyperplanes and as such, it is not even free,
	e.g.\ by Terao's Factorization Theorem \cite{terao:freefactors}, as its characteristic polynomial 
	has non-integral roots.
\end{example}


\section{Applications}
\label{Sec_Appl}

In this section we apply our main Theorem \ref{Thm_MATRest_sec} and its generalization, Theorem \ref{Thm_MATRestGen}
to Weyl arrangements, their ideal subarrangements, extended Catalan arrangements and ideal-Shi arrangements.

\subsection{Weyl and Ideal arrangements}
\label{SSec_WeylIdeal}

For general information about Weyl groups and their root systems, see \cite{bourbaki:groupes}.

Let $W$ be a Weyl group acting as a reflection group on the real vector space $V = \RR^\ell$.
Let $\Phi := \Phi(W) \subseteq V^*$ be a (reduced) root system for $W$ and $\Phi^+ \subseteq \Phi$ a positive system
with simple roots $\Delta \subseteq \Phi^+$.
The \emph{rank} of $W$ respectively $\Phi$ is $\rk(W) := \rk(\Phi) := \dim \RR\Phi$.
We have $\Phi^+ = (\sum_{\alpha \in \Delta} \ZZ_{\geq 0} \alpha) \cap \Phi$, i.e.\
if $\beta \in \Phi^+$ then there are integers $n_\alpha \in \ZZ_{\geq0}$ such that
$\beta = \sum_{\alpha \in \Delta} n_\alpha \alpha$.
Then the \emph{height} of $\beta$ is defined by
$$
\h(\beta) := \sum_{\alpha\in\Delta}n_\alpha.
$$

The partial order $\leq$ on $\Phi^+$ is defined by
$$
    \beta \leq \gamma \, :\iff \, \gamma-\beta \in \sum_{\alpha \in \Delta} \ZZ_{\geq 0}\alpha.
$$

A subset $\Ic \subseteq \Phi^+$ is an \emph{ideal} if it is a (lower) order ideal 
in the poset $(\Phi^+,\leq)$, i.e.\ for $\alpha \in \Ic$ and $\beta \in \Phi^+$
with $\beta \leq \alpha$, we have $\beta \in \Ic$.

The \emph{Weyl arrangement} $\Ac(W)$
is the hyperplane arrangement in $V$ defined by 
$$
    \Ac(W) := \{\ker(\beta) \mid \beta \in \Phi^+ \}.
$$

\begin{definition}[{\cite{ABCHT16_FreeIdealWeyl}}]
	\label{DEF_ideal}	
    If $\Ic \subseteq \Phi^+$ is an order ideal then 
    \[
        \Ac_\Ic := \{ \ker(\beta) \mid \beta \in \Ic \} \subseteq \Ac(W)
    \]
    is called an 
    \emph{ideal (sub)arrangement}.
\end{definition}

We denote by $m_\Ic$ the maximal height of a root in $\Ic$.
For $1 \leq k \leq m_\Ic$,
let $$\pi_{k,\Ic} := \{ \ker(\alpha) \mid \alpha \in \Ic, \h(\alpha) = k \}$$ 
and let 
\[
    \pi_\Ic := (\pi_{1,\Ic}| \cdots | \pi_{m_\Ic,\Ic}),
\] %
be the \emph{root-height partition} of $\Ac_\Ic$. 
Set $\pi_{j,\Ic} = \emptyset$ for $j > m_\Ic$.

Next we recall the principal result from \cite[Thm.~1.1]{ABCHT16_FreeIdealWeyl} (Ideal-free Theorem) in our terminology, using the notation above.

\begin{theorem}
    \label{Thm_IdealFree}
    Let $\Ac_{\Ic} \subseteq \Ac(W)$ be an ideal subarrangement of the Weyl arrangement $\Ac(W)$
    for the order ideal $\Ic \subseteq \Phi^+$. 
    Then $\Ac_\Ic$ is MAT-free with MAT-partition
    $\pi_\Ic = (\pi_{1,\Ic} |\cdots| \pi_{m_\Ic,\Ic} )$ and exponents 
    \[
        \exp\left(\Ac_\Ic\right) = (e_1^{\Ic},\ldots,e_{\ell}^{\Ic}),
    \] 
    where
    \begin{align*}
        e_r^{\Ic} = |\{ j \mid |\pi_{j,\Ic}| \geq \ell-r+1\}|.
    \end{align*}
\end{theorem}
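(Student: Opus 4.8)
The statement to prove is Theorem~\ref{Thm_IdealFree}, which recasts the Ideal-free Theorem of \cite{ABCHT16_FreeIdealWeyl} in the MAT-free language. The plan is to verify directly that the root-height partition $\pi_\Ic = (\pi_{1,\Ic}|\cdots|\pi_{m_\Ic,\Ic})$ satisfies the three conditions of Definition~\ref{Def_MATfree} for each prefix $\Ac_{k,\Ic} := \bigcup_{i=1}^k \pi_{i,\Ic}$, so that $\Ac_{k+1,\Ic} = \Ac_{k,\Ic} \cup \pi_{k+1,\Ic}$ is an MAT-step; freeness and the exponent formula then follow from Theorem~\ref{Thm_MAT} together with Remark~\ref{rem:MAT-free}(b). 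Note that $\Ac_{k,\Ic}$ is itself the ideal subarrangement of the order ideal $\Ic_{\le k} := \{\alpha \in \Ic \mid \h(\alpha) \le k\}$, so the verification is uniform across all prefixes.

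First I would verify condition~(1), $\rk(\pi_{k+1,\Ic}) = |\pi_{k+1,\Ic}|$: the roots of a fixed height $k+1$ lying in $\Ic$ are linearly independent. This is a standard fact about root systems — an antichain of roots of equal height is linearly independent (one can see it, e.g., via the partial sums/Kostant-type argument, or cite the relevant lemma from \cite{ABCHT16_FreeIdealWeyl}). Second, condition~(2), $\bigcap_{H \in \pi_{k+1,\Ic}} H \nsubseteq \bigcup_{H' \in \Ac_{k,\Ic}} H'$: writing $X$ for the intersection of the hyperplanes attached to the height-$(k+1)$ roots in $\Ic$, one must exhibit a point of $X$ avoiding all $\ker(\beta)$ with $\h(\beta)\le k$, $\beta\in\Ic$; again this is extracted from the root-system combinatorics in \cite{ABCHT16_FreeIdealWeyl}. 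Third, and this is the technical heart, condition~(3): for each $H = \ker(\alpha) \in \pi_{k+1,\Ic}$ one needs
\[
    |\Ac_{k,\Ic}| - |(\Ac_{k,\Ic} \cup \{H\})^H| = k.
\]
The right-hand count $|(\Ac_{k,\Ic} \cup \{H\})^H|$ is the number of distinct hyperplanes $H' \cap H$ for $H' \in \Ac_{k,\Ic}$, equivalently the number of distinct rank-two flats through $H$; the displayed identity says that exactly $k$ hyperplanes of $\Ac_{k,\Ic}$ collapse onto already-counted flats, i.e.\ the "multiplicity deficit" along $H$ equals $k$. Establishing this is precisely the pairing argument of \cite{ABCHT16_FreeIdealWeyl}: for a positive root $\alpha$ of height $k+1$, the reflections $s_\beta$ with $\beta \in \Ic$, $\h(\beta)\le k$ that fix (rather than move into a new flat) pair up via the root string through $\alpha$, and the count of such pairs is $k$. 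I would either reproduce this combinatorial lemma or, since the statement is explicitly attributed to \cite[Thm.~1.1]{ABCHT16_FreeIdealWeyl}, cite it and only indicate how it translates into condition~(3) of Definition~\ref{Def_MATfree}.

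Once conditions~(1)--(3) hold for every $k$, Theorem~\ref{Thm_MAT} applied inductively gives that $\Ac_{k,\Ic}$ is free for all $k$, hence $\Ac_\Ic = \Ac_{m_\Ic,\Ic}$ is free and $\pi_\Ic$ is an MAT-partition. The exponent formula $e_r^{\Ic} = |\{ j \mid p_{j,\Ic} \geq \ell-r+1\}|$ is then exactly the dual-partition description of $\exp(\Ac_\Ic)$ from Remark~\ref{rem:MAT-free}(b), applied with $|\pi_j| = p_{j,\Ic}$. The main obstacle is clearly condition~(3): conditions~(1) and~(2) are fairly soft linear-algebra/genericity statements, but~(3) is where the arithmetic of root heights enters in an essential way, and it is the content of the original proof in \cite{ABCHT16_FreeIdealWeyl} that we are repackaging here.
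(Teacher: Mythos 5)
The paper offers no proof of this theorem at all: it is recalled verbatim from \cite[Thm.~1.1]{ABCHT16_FreeIdealWeyl}, merely rephrased in the MAT-partition terminology of Definition~\ref{Def_MATfree}. Your outline --- checking conditions (1)--(3) of Theorem~\ref{Thm_MAT} for the root-height partition level by level and then reading the exponents off the dual partition via Remark~\ref{rem:MAT-free}(b) --- is a faithful reconstruction of that cited argument, and since you defer the substantive combinatorial steps (linear independence of antichains, the genericity statement in condition (2), and above all the height identity in condition (3)) to \cite{ABCHT16_FreeIdealWeyl} just as the paper does, your treatment is consistent with the paper's.
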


Combining Theorems \ref{Thm_MATRest_sec} and \ref{Thm_IdealFree} and using the notation from above,
we immediately obtain the following.

\begin{theorem}
    \label{Thm_HtRestIdeal_sec} %
    Let $\Ac = \Ac_{\Ic} \subseteq \Ac(W)$ be an ideal subarrangement of the Weyl arrangement $\Ac(W)$
    for the order ideal $\Ic \subseteq \Phi^+$. 
    
    Then for each $1\leq k \leq m_\Ic$ and  $|\pi_{k+1,\Ic}| \leq q \leq |\pi_{k,\Ic}|$
    there is a $\Cc \subseteq \pi_{k,\Ic}$ with $|\Cc| = q$
    such that for $X := \cap_{H \in \Cc}H$ 
    the restriction $\Ac^{X}$ is free with exponents 
    \[
        \exp\left(\Ac^{X}\right) = (e_1^{\Ic},\ldots,e_{\ell-q}^{\Ic}),
    \] 
    where
    \begin{align*}
        e_r^{\Ic} = |\{ j \mid |\pi_{j,\Ic}| \geq \ell-r+1\}|.
    \end{align*}
\end{theorem}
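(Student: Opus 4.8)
The plan is to deduce Theorem \ref{Thm_HtRestIdeal_sec} directly by feeding the structural input of Theorem \ref{Thm_IdealFree} into the general machinery of Theorem \ref{Thm_MATRest_sec}. First I would invoke Theorem \ref{Thm_IdealFree} to record that $\Ac = \Ac_\Ic$ is MAT-free with the explicit MAT-partition $\pi_\Ic = (\pi_{1,\Ic}|\cdots|\pi_{m_\Ic,\Ic})$, so that in the notation of Section \ref{SSec_ProofMainRes} we have $n = m_\Ic$, $\pi_k = \pi_{k,\Ic}$, $p_k = p_{k,\Ic}$, and $\exp(\Ac) = (e_1^\Ic,\ldots,e_\ell^\Ic)$ with $e_r^\Ic = |\{j \mid p_{j,\Ic} \geq \ell-r+1\}|$. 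This is precisely the list of exponents that appears in Theorem \ref{Thm_MATRest_sec}, written in increasing order by Remark \ref{rem:MAT-free}(b) and (c).

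Next I would simply apply Theorem \ref{Thm_MATRest_sec} verbatim with this MAT-partition: for each $1 \leq k \leq m_\Ic$ and each $p_{k+1,\Ic} \leq q \leq p_{k,\Ic}$, the theorem furnishes a subset $\Cc \subseteq \pi_{k,\Ic}$ with $|\Cc| = q$ such that, setting $X := \cap_{H\in\Cc} H$, the restriction $\Ac^X$ is free with $\exp(\Ac^X) = (e_1^\Ic,\ldots,e_{\ell-q}^\Ic)_\leq$, and moreover that there is a basis of $D(\Ac)$ whose first $\ell-q$ members restrict to a basis of $D(\Ac^X)$. Since the exponent formula $e_r^\Ic = |\{j \mid p_{j,\Ic} \geq \ell-r+1\}|$ depends only on $r$ (and not on $q$), the truncated list $(e_1^\Ic,\ldots,e_{\ell-q}^\Ic)$ is exactly what Theorem \ref{Thm_HtRestIdeal_sec} claims, and the proof is complete.

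Honestly, there is no real obstacle here — the statement is a transparent specialization and the only "work" is bookkeeping to match the abstract notation of Theorem \ref{Thm_MATRest_sec} (with its $\pi_k$, $p_k$, $\Ac_k$) to the root-theoretic notation ($\pi_{k,\Ic}$, $p_{k,\Ic}$, heights) introduced in this subsection. If anything deserves a sentence of care, it is observing that the blocks $\pi_{k,\Ic}$ of the root-height partition genuinely form an MAT-partition in the sense of Definition \ref{Def_MATfree}, which is exactly the content of Theorem \ref{Thm_IdealFree} and so may be cited rather than re-proved. The geometric remark that the block $\pi_{k,\Ic}$ consists of the hyperplanes attached to roots of height $k$ — so that the realizing intersections $X$ come from antichains of roots of a common height — is already flagged in the discussion following Theorem \ref{Thm_HtRestIdeal} and need not be repeated in the proof itself.

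\begin{proof}
    By Theorem \ref{Thm_IdealFree}, the arrangement $\Ac = \Ac_\Ic$ is MAT-free with MAT-partition
    $\pi_\Ic = (\pi_{1,\Ic}|\cdots|\pi_{m_\Ic,\Ic})$ and exponents
    $\exp(\Ac) = (e_1^\Ic,\ldots,e_\ell^\Ic)$, where $e_r^\Ic = |\{j \mid p_{j,\Ic} \geq \ell-r+1\}|$;
    by Remark \ref{rem:MAT-free}(b), (c) this list is in non-decreasing order.
    Applying Theorem \ref{Thm_MATRest_sec} to this MAT-partition (so that $n = m_\Ic$, $\pi_k = \pi_{k,\Ic}$
    and $p_k = p_{k,\Ic}$), we conclude: for each $1 \leq k \leq m_\Ic$ and each
    $p_{k+1,\Ic} \leq q \leq p_{k,\Ic}$ there is a $\Cc \subseteq \pi_{k,\Ic}$ with $|\Cc| = q$ such that,
    setting $X := \cap_{H \in \Cc} H$, the restriction $\Ac^X$ is free with
    $\exp(\Ac^X) = (e_1^\Ic,\ldots,e_{\ell-q}^\Ic)_\leq$. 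Since the formula for $e_r^\Ic$ does not depend on $q$,
    this is the asserted exponent list, and the proof is complete.
\end{proof}
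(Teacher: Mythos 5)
Your proof is correct and follows exactly the paper's own route: the paper derives Theorem \ref{Thm_HtRestIdeal_sec} by simply combining Theorem \ref{Thm_IdealFree} (ideal arrangements are MAT-free with the root-height partition) with Theorem \ref{Thm_MATRest_sec}, which is precisely your argument. The extra bookkeeping you supply (matching $n = m_\Ic$, $\pi_k = \pi_{k,\Ic}$, $p_k = p_{k,\Ic}$ and noting the exponent formula is independent of $q$) is accurate and harmless.
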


\begin{corollary}[Theorem \ref{Thm_HtRestIdeal}]
    Ideal arrangements are accurate.
\end{corollary}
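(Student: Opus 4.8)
The plan is to derive the statement ``Ideal arrangements are accurate'' directly from Theorem~\ref{Thm_HtRestIdeal_sec} together with the description of the exponents of $\Ac_\Ic$ in Theorem~\ref{Thm_IdealFree}. Fix an order ideal $\Ic\subseteq\Phi^+$ with maximal height $m_\Ic$, and write $\Ac=\Ac_\Ic$ with $\exp(\Ac)=(e_1\le\cdots\le e_\ell)$, where $e_r=e_r^\Ic=|\{j\mid p_{j,\Ic}\ge \ell-r+1\}|$. By Theorem~\ref{Thm_IdealFree} the arrangement $\Ac$ is free, so to establish accuracy in the sense of Definition~\ref{Def_TF} it remains to show that for every $1\le d\le\ell$ there is an $X\in L(\Ac)$ with $\dim X=d$ (equivalently $\rk(X)=\ell-d$) such that $\Ac^X$ is free with $\exp(\Ac^X)=(e_1\le\cdots\le e_d)$.

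First I would fix $d$ and set $q:=\ell-d$, so that I need a flat $X$ of rank $q$ with $\exp(\Ac^X)=(e_1,\dots,e_{\ell-q})_\le$. The key point is to locate the right height-level $k$ of the root-height partition $\pi_\Ic=(\pi_{1,\Ic}|\cdots|\pi_{m_\Ic,\Ic})$. Since $\Ac_0=\varnothing_\ell$ corresponds to ``no roots'' and $\Ac_{m_\Ic}=\Ac$, the block sizes satisfy $p_{1,\Ic}>p_{2,\Ic}\ge\cdots\ge p_{m_\Ic,\Ic}\ge 1$ and $p_{m_\Ic+1,\Ic}=0$ (this is Remark~\ref{rem:MAT-free}(c) applied to the MAT-partition $\pi_\Ic$ from Theorem~\ref{Thm_IdealFree}). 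Hence as $k$ ranges over $1,\dots,m_\Ic$ the intervals $[p_{k+1,\Ic},\,p_{k,\Ic}]$ cover every integer $q$ with $1\le q\le p_{1,\Ic}=|\pi_{1,\Ic}|$, and $p_{1,\Ic}$ is the number of simple roots in $\Ic$, which equals $\ell$ precisely when $\Ic$ contains all of $\Delta$ — and in general equals $\ell$ for the full Weyl arrangement; for a proper ideal one instead has $p_{1,\Ic}=|\{\alpha\in\Delta\mid\alpha\in\Ic\}|$, and one must check that this is exactly the number of values of $q$ needed, namely $q$ up to $\ell-1$ (the case $q=0$, $d=\ell$, being $X=V$ and $\Ac^X=\Ac$ trivially). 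So the first substantive step is: for the given $q$ with $1\le q\le\ell-1$, choose $k$ with $p_{k+1,\Ic}\le q\le p_{k,\Ic}$; such a $k$ exists because the block sizes form a weakly decreasing sequence from $p_{1,\Ic}$ down to $p_{m_\Ic,\Ic}$ and then to $0$.

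With such a $k$ fixed, Theorem~\ref{Thm_HtRestIdeal_sec} hands us, for that $k$ and that $q$, a subset $\Cc\subseteq\pi_{k,\Ic}$ with $|\Cc|=q$ such that $X:=\bigcap_{H\in\Cc}H$ satisfies $\dim X=\ell-q=d$ (its rank is $q$ by condition~(1) of the MAT-step, the $q$ hyperplanes in $\Cc$ meeting transversally) and $\Ac^X$ is free with $\exp(\Ac^X)=(e_1^\Ic,\dots,e_{\ell-q}^\Ic)_\le=(e_1,\dots,e_d)_\le$. This is exactly the flat required by Definition~\ref{Def_TF} for the parameter $d$. Running over all $d$ (equivalently all $q$) completes the verification that $\Ac_\Ic$ is accurate. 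I would also remark, as the paper does after the statement, that the flats $X$ arising this way are intersections of hyperplanes $\ker(\alpha)$ with $\alpha$ ranging over an antichain of roots of a common height $k$ inside $\Ic$, which gives the concrete combinatorial picture.

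The only real obstacle is the bookkeeping at the extremes of the range of $q$: one must make sure that the union $\bigcup_{k=1}^{m_\Ic}[p_{k+1,\Ic},p_{k,\Ic}]$ of intervals of available $q$-values really does hit every integer in $\{1,\dots,\ell-1\}$, i.e. that $p_{1,\Ic}\ge\ell-1$ when $\Ic$ is such that $\Ac_\Ic$ has full rank $\ell$, or more precisely that the top exponent block $\pi_{1,\Ic}$ has the right size. In fact, for an order ideal $\Ic$ with $\rk(\Ac_\Ic)=\ell$ one has $\Delta\subseteq\Ic$ (every simple root lies below any root in its support), so $p_{1,\Ic}=|\Delta|=\ell$ and the intervals cover $\{1,\dots,\ell\}$; if $\rk(\Ac_\Ic)=\ell'<\ell$ one decomposes $V$ and argues on the essentialization, the extra $\ell-\ell'$ zero exponents causing no trouble since Definition~\ref{Def_TF}'s requirement for small $d$ is then met by coordinate subspaces — this is Remark~\ref{rem_MATRest}(v) on products. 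Everything else is a direct quotation of Theorems~\ref{Thm_IdealFree} and~\ref{Thm_HtRestIdeal_sec}.
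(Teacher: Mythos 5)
Your proposal is correct and follows essentially the same route as the paper, which obtains the corollary by combining Theorem~\ref{Thm_IdealFree} (ideal arrangements are MAT-free via the root-height partition) with Theorem~\ref{Thm_MATRest_sec} to get Theorem~\ref{Thm_HtRestIdeal_sec}, from which the statement is immediate. Your explicit bookkeeping --- that the weakly decreasing block sizes $p_{1,\Ic}\ge\cdots\ge p_{m_\Ic,\Ic}\ge 1 > 0=p_{m_\Ic+1,\Ic}$ make the intervals $[p_{k+1,\Ic},p_{k,\Ic}]$ cover every rank $q$, and that $p_{1,\Ic}=|\Delta\cap\Ic|$ equals the rank of $\Ac_\Ic$ --- is a correct elaboration of what the paper leaves implicit.
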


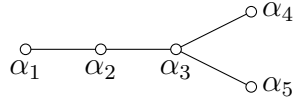
\begin{figure}
	
	\begin{center}
	\begin{tikzpicture}[scale=1.0]
		\node (1) at (0,0) {};
		\node (2) at (1,0) {};
		\node (3) at (2,0) {};
		\node (4) at (3,0.5) {};
		\node (5) at (3,-0.5) {};
		
		\draw (1) circle[radius=2pt];
		\draw (2) circle[radius=2pt];
		\draw (3) circle[radius=2pt];
		\draw (4) circle[radius=2pt];
		\draw (5) circle[radius=2pt];
		
		\draw [shorten <=2pt, shorten >=2pt] ($(1)$) -- ($(2)$);
		\draw [shorten <=2pt, shorten >=2pt] ($(2)$) -- ($(3)$);
		\draw [shorten <=2pt, shorten >=2pt] ($(3)$) -- ($(4)$);
		\draw [shorten <=2pt, shorten >=2pt] ($(3)$) -- ($(5)$);
		
		\node[below] at (1) {$\alpha_1$};
		\node[below] at (2) {$\alpha_2$};
		\node[below] at (3) {$\alpha_3$};
		\node[right] at (4) {$\alpha_4$};
		\node[right] at (5) {$\alpha_5$};			
	\end{tikzpicture}
	\end{center}
	\caption{The Dynkin diagram of $D_5$ with the corresponding numbering of the simple roots.}
	\label{Fig_ExDynD5}
	
\end{figure}

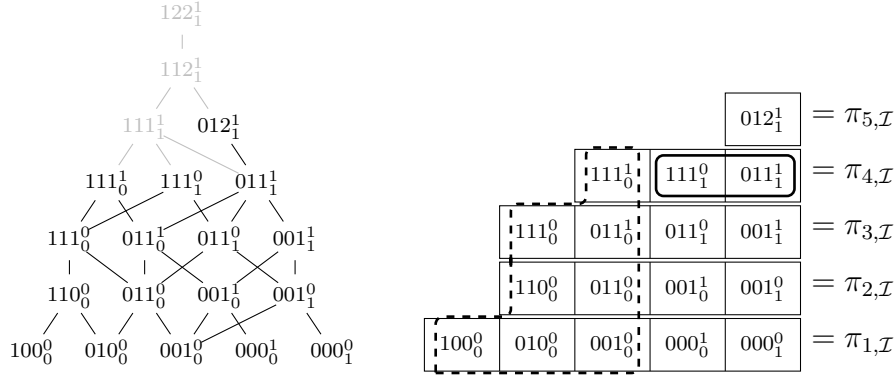
\begin{figure}
	
	\begin{center}
	\begin{tikzpicture}[scale=1.0]
		\node (5) at (1.5,0.75) {\tiny{$000^0_1$}};
		\node (4) at (0.5,0.75) {\tiny{$000^1_0$}};
		\node (3) at (-0.5,0.75) {\tiny{$001^0_0$}};
		\node (2) at (-1.5,0.75) {\tiny{$010^0_0$}};
		\node (1) at (-2.5,0.75) {\tiny{$100^0_0$}};
		
		\node (9) at (1.,1.5) {\tiny{$001^0_1$}};
		\node (8) at (0.,1.5) {\tiny{$001^1_0$}};
		\node (7) at (-1.,1.5) {\tiny{$011^0_0$}};
		\node (6) at (-2.,1.5) {\tiny{$110^0_0$}};
		\draw  [shorten <=8pt, shorten >=8pt] ($(9)$) -- ($(5)$); 
		\draw  [shorten <=8pt, shorten >=8pt] ($(9)$) -- ($(3)$); 
		\draw  [shorten <=8pt, shorten >=8pt] ($(8)$) -- ($(4)$); 
		\draw  [shorten <=8pt, shorten >=8pt] ($(8)$) -- ($(3)$); 
		\draw  [shorten <=8pt, shorten >=8pt] ($(7)$) -- ($(3)$); 
		\draw  [shorten <=8pt, shorten >=8pt] ($(7)$) -- ($(2)$); 
		\draw  [shorten <=8pt, shorten >=8pt] ($(6)$) -- ($(2)$); 
		\draw  [shorten <=8pt, shorten >=8pt] ($(6)$) -- ($(1)$); 
		
		\node (13) at (1.,2.25) {\tiny{$001^1_1$}};
		\node (12) at (0.,2.25) {\tiny{$011^0_1$}};
		\node (11) at (-1.,2.25) {\tiny{$011^1_0$}};
		\node (10) at (-2.,2.25) {\tiny{$111^0_0$}};
		\draw  [shorten <=8pt, shorten >=8pt] ($(13)$) -- ($(9)$); 
		\draw  [shorten <=8pt, shorten >=8pt] ($(13)$) -- ($(8)$); 
		\draw  [shorten <=8pt, shorten >=8pt] ($(12)$) -- ($(9)$); 
		\draw  [shorten <=8pt, shorten >=8pt] ($(12)$) -- ($(7)$); 
		\draw  [shorten <=8pt, shorten >=8pt] ($(11)$) -- ($(8)$); 
		\draw  [shorten <=8pt, shorten >=8pt] ($(11)$) -- ($(7)$); 
		\draw  [shorten <=8pt, shorten >=8pt] ($(10)$) -- ($(7)$); 
		\draw  [shorten <=8pt, shorten >=8pt] ($(10)$) -- ($(6)$);

		\node (16) at (0.5,3.) {\tiny{$011^1_1$}};
		\node (15) at (-0.5,3.) {\tiny{$111^0_1$}};
		\node (14) at (-1.5,3.) {\tiny{$111^1_0$}};		
		\draw  [shorten <=8pt, shorten >=8pt] ($(16)$) -- ($(13)$); 
		\draw  [shorten <=8pt, shorten >=8pt] ($(16)$) -- ($(12)$); 
		\draw  [shorten <=8pt, shorten >=8pt] ($(16)$) -- ($(11)$); 
		\draw  [shorten <=8pt, shorten >=8pt] ($(15)$) -- ($(12)$); 
		\draw  [shorten <=8pt, shorten >=8pt] ($(15)$) -- ($(10)$);
		\draw  [shorten <=8pt, shorten >=8pt] ($(14)$) -- ($(11)$); 
		\draw  [shorten <=8pt, shorten >=8pt] ($(14)$) -- ($(10)$);

		\node (18) at (0.,3.75) {\tiny{$012^1_1$}};
		\draw  [shorten <=8pt, shorten >=8pt] ($(18)$) -- ($(16)$); 
		
		{\color{lightgray}
			\node (17) at (-1.,3.75) {\tiny{$111^1_1$}};
			\draw  [shorten <=8pt, shorten >=8pt] ($(17)$) -- ($(16)$); 
			\draw  [shorten <=8pt, shorten >=8pt] ($(17)$) -- ($(15)$); 
			\draw  [shorten <=8pt, shorten >=8pt] ($(17)$) -- ($(14)$); 
			
			\node (19) at (-0.5,4.5) {\tiny{$112^1_1$}};
			\draw  [shorten <=8pt, shorten >=8pt] ($(19)$) -- ($(18)$); 
			\draw  [shorten <=8pt, shorten >=8pt] ($(19)$) -- ($(17)$);

			\node (20) at (-0.5,5.25) {\tiny{$122^1_1$}};
			\draw  [shorten <=8pt, shorten >=8pt] ($(20)$) -- ($(19)$); 
		}		
	\end{tikzpicture}
	\hspace{0.5cm}
	\begin{tikzpicture}[scale=1.0]
		\node (5) at (1.5,0.75) {\tiny{$000^0_1$}};
		\node (4) at (0.5,0.75) {\tiny{$000^1_0$}};
		\node (3) at (-0.5,0.75) {\tiny{$001^0_0$}};
		\node (2) at (-1.5,0.75) {\tiny{$010^0_0$}};
		\node (1) at (-2.5,0.75) {\tiny{$100^0_0$}};
		
		\node (9) at (1.5,1.5) {\tiny{$001^0_1$}};
		\node (8) at (0.5,1.5) {\tiny{$001^1_0$}};
		\node (7) at (-0.5,1.5) {\tiny{$011^0_0$}};
		\node (6) at (-1.5,1.5) {\tiny{$110^0_0$}};
		
		\node (13) at (1.5,2.25) {\tiny{$001^1_1$}};
		\node (12) at (0.5,2.25) {\tiny{$011^0_1$}};
		\node (11) at (-0.5,2.25) {\tiny{$011^1_0$}};
		\node (10) at (-1.5,2.25) {\tiny{$111^0_0$}};
	
		\node (16) at (1.5,3.) {\tiny{$011^1_1$}};
		\node (15) at (0.5,3.) {\tiny{$111^0_1$}};
		\draw[rounded corners=0.1cm, line width = 1pt] ($(15)-(0.42,0.27)$) rectangle ($(16)+(0.42,0.27)$);

		\node (14) at (-0.5,3.) {\tiny{$111^1_0$}};		
		
		\node (18) at (1.5,3.75) {\tiny{$012^1_1$}};
		
		\draw ($(1)-(0.5,0.35)$) rectangle ($(5)+(0.5,0.35)$);
		\draw ($(1)+(0.5,0.35)$) -- ($(1)+(0.5,-0.35)$);
		\draw ($(2)+(0.5,0.35)$) -- ($(2)+(0.5,-0.35)$);
		\draw ($(3)+(0.5,0.35)$) -- ($(3)+(0.5,-0.35)$);
		\draw ($(4)+(0.5,0.35)$) -- ($(4)+(0.5,-0.35)$);
		\node[right]  at (2.,0.75) {$=\pi_{1,\Ic}$};
		
		\draw ($(6)-(0.5,0.35)$) rectangle ($(9)+(0.5,0.35)$);
		\draw ($(6)+(0.5,0.35)$) -- ($(6)+(0.5,-0.35)$);
		\draw ($(7)+(0.5,0.35)$) -- ($(7)+(0.5,-0.35)$);
		\draw ($(8)+(0.5,0.35)$) -- ($(8)+(0.5,-0.35)$);
		\node[right]  at (2.,1.5) {$=\pi_{2,\Ic}$};
		
		\draw ($(10)-(0.5,0.35)$) rectangle ($(13)+(0.5,0.35)$);
		\draw ($(10)+(0.5,0.35)$) -- ($(10)+(0.5,-0.35)$);
		\draw ($(11)+(0.5,0.35)$) -- ($(11)+(0.5,-0.35)$);
		\draw ($(12)+(0.5,0.35)$) -- ($(12)+(0.5,-0.35)$);
		\node[right]  at (2.,2.25) {$=\pi_{3,\Ic}$};
		
		\draw ($(14)-(0.5,0.35)$) rectangle ($(16)+(0.5,0.35)$);
		\draw ($(14)+(0.5,0.35)$) -- ($(14)+(0.5,-0.35)$);
		\draw ($(15)+(0.5,0.35)$) -- ($(15)+(0.5,-0.35)$);
		\node[right]  at (2.,3.) {$=\pi_{4,\Ic}$};
		
		\draw ($(18)-(0.5,0.35)$) rectangle ($(18)+(0.5,0.35)$);
		\node[right]  at (2.,3.75) {$=\pi_{5,\Ic}$};
		
		\draw[rounded corners=0.1cm, line width = 1.0pt, dashed] 
		($(1)+(-0.35,-0.375)$) -- ($(3)+(0.35,-0.375)$) -- ($(14)+(0.35,0.375)$)
		-- ($(14)+(-0.35,0.375)$) -- ($(14)+(-0.35,-0.375)$) -- ($(10)+(-0.35,0.375)$)
		-- ($(10)+(-0.35,-0.375)$)-- ($(6)+(-0.35,0.375)$)-- ($(6)+(-0.35,-0.375)$)
		-- ($(1)+(0.35,0.375)$)-- ($(1)+(-0.35,0.375)$)-- ($(1)+(-0.35,-0.375)$);
		
	\end{tikzpicture}
	\end{center}
	\caption{The root-poset of type $D_5$, the height partition of the ideal $\Ic$, 
		and a restriction of the ideal subarrangement $\Ac_\Ic \subseteq \Ac(D_5)$.} 
	\label{Fig_ExIdealRes}

\end{figure}

\begin{example}
	Let $\Phi$ be the root system of type $D_5$ and 
	write ${c_1c_2c_3}^{c_4}_{c_5}$ for $\beta = \sum_i c_i\alpha_i \in \Phi^+$
	with the numbering of the simple roots according to the Dynkin diagram displayed in Figure \ref{Fig_ExDynD5}.
	Let $\Ic := \{ \beta \in \Phi^+ \mid \beta \leq 012_1^1,$ or $\beta \leq 111_0^1,$ or $\beta \leq 111_1^0 \}$
	be the ideal as shown in Figure \ref{Fig_ExIdealRes}.
	
	Then $m_\Ic = 0+1+2+1+1 = 5$ and $\Ac_\Ic$ is MAT-free with MAT-partition $(\pi_{1,\Ic}|\cdots|\pi_{5,\Ic})$
	and exponents $\exp(\Ac_\Ic) = (1,3,4,4,5)$.

	Now, for $Y := \ker(010^0_0) \cap \ker(001^0_0)$ the characteristic polynomial 
	of the restriction $\Ac_\Ic^Y$ does not factor over $\ZZ$ and consequently, the restriction $\Ac_\Ic^Y$
	is not free by Terao's Factorization Theorem \cite{terao:freefactors}.
	
	But by Theorem \ref{Thm_HtRestIdeal_sec} 
	for e.g.\ $\Cc = \{\ker(111^0_1), \ker(011^1_1)\} \subseteq \pi_{4,\Ic}$
	and $X = \cap_{H \in \Cc}H = \ker(111^0_1) \cap \ker(011^1_1)$,
	the restriction $\Ac_\Ic^{X}$ is free with exponents $\exp(\Ac_\Ic^{X}) = (1,3,4)$.
	Note that the exponents can be easily read off 	the enclosed part (dashed line) of the partition diagram
	as illustrated in Figure \ref{Fig_ExIdealRes}.
\end{example}

The preceding example shows that Theorem \ref{Thm_HtRestIdeal_sec} is probably the strongest
general statement one can give concerning the freeness of restrictions of ideal arrangements and their exponents.


\subsection{Ideal-Shi and extended Catalan arrangements}
\label{SSec_IdealShiCatalan}

Let $W$, $V$ and $\Delta \subseteq \Phi^+ \subseteq \Phi \subseteq V^*$ be as in Section \ref{SSec_WeylIdeal}.
Let $$h := m_{\Phi^+}+1$$ be the \emph{Coxeter number} of $W$.
Embed $V \subseteq V' = \RR^{\ell+1}$ and let $z \in (V')^* \setminus \{0\}$
such that $V = \ker(z) =: H_z$, i.e.\ $z$ corresponds to the $(\ell+1)$st coordinate.
For $j \in \ZZ$ define $$H_\alpha^j := \ker(\alpha-jz).$$

The combinatorics of deformations of Weyl arrangements was first studied by Athanasiadis in \cite{Ath96_CharPolySubspaceArr}, among them the so called extended Shi arrangements.
We are interested in the following generalization, investigated by Abe and Terao in \cite{AbeTer15_IdealShi}, the so called ideal-Shi arrangements.

\begin{definition}
    \label{Def_IdealShiCatalan}
    For $k \in \ZZ_{>0}$ the \emph{extended Shi arrangements}
    are defined as
    \[
         \Shi^k := \{ H_\alpha^j \mid -k+1\leq j \leq k \} \cup \{H_z \}.
    \]
    
    For $\Ic \subseteq \Phi^+$ an ideal,  
    the \emph{ideal-Shi arrangements}
    are defined as
    \[
        \Shi_\Ic^k := \Shi^k \cup  \{H_\beta^{-k} \mid \beta \in \Ic \}.
    \]
    
    In the special case when $\Ic = \Phi^+$ 
    we obtain the \emph{extended Catalan  arrangements} $$\Cat^k := \Shi_{\Phi^+}^k.$$
\end{definition}

In \cite{Yos04_CharaktFree}, Yoshinaga proved the following remarkable theorem, confirming a conjecture
by Edelman and Reiner \cite[Conj.~3.3]{ER96_FreeArrRhombicTilings}.
\begin{theorem}[{\cite[Thm.~1.2]{Yos04_CharaktFree}}]
    Let $k \in \ZZ_{>0}$. 
    Then the extended Shi arrangement $\Shi^k$ is free with exponents
    \[
        \exp\left(\Shi^k\right) = (1,hk,\ldots,hk) \in \ZZ^{\ell+1},
    \]
    and the extended Catalan arrangement $\Cat^k$ is free with exponents
    \[
        \exp\left(\Cat^k\right) = (1,hk+e_1,\ldots,hk+e_\ell),
    \]
    where $(e_1,\ldots,e_\ell) = \exp(\Ac(W))$.
\end{theorem}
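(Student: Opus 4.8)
The plan is to exploit that $\Shi^k$ and $\Cat^k$ are \emph{central cones}: the coordinate $z$ is the cone variable, and $H_z=\ker(z)$ is the hyperplane at infinity of the underlying affine deformation of the Weyl arrangement $\Ac(W)$. The argument has four steps: (i) compute the Ziegler restriction of these cones to $H_z$; (ii) recognise it as a Coxeter multiarrangement with \emph{constant} multiplicity, which is free by Terao's theorem on multiderivations of Coxeter arrangements; (iii) lift freeness from the Ziegler restriction to the whole arrangement via Yoshinaga's freeness criterion, the numerical input being Athanasiadis's computation of the characteristic polynomial; (iv) read off the exponents from the characteristic polynomial using Terao's factorization theorem.

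First I would carry out step (i). On $H_z$ we have $z=0$, so the defining form $\alpha-jz$ of $H_\alpha^j$ restricts to $\alpha$; hence, for fixed $\alpha\in\Phi^+$, all the parallel hyperplanes $H_\alpha^j$ collapse onto the single hyperplane $\ker(\alpha)\cap H_z$. Consequently $(\Shi^k)^{H_z}=\Ac(W)=(\Cat^k)^{H_z}$ as simple arrangements, and a direct count of $\Ac_X$ for $X=\ker(\alpha)\cap H_z$ shows that the Ziegler multiplicity of $\ker(\alpha)$ is the number of affine hyperplanes in that direction, namely $2k$ for $\Shi^k$ (indices $-k+1\le j\le k$) and $2k+1$ for $\Cat^k$ (indices $-k\le j\le k$). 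Thus the Ziegler restriction is the constant-multiplicity Coxeter multiarrangement $(\Ac(W),2k)$, respectively $(\Ac(W),2k+1)$.

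Next come steps (ii)--(iii). By Terao's multiderivation theorem, the constant-multiplicity Coxeter multiarrangement $(\Ac(W),m)$ is free, with all exponents equal to $hk$ when $m=2k$, and equal to $hk+e_1,\ldots,hk+e_\ell$ when $m=2k+1$, where $h$ is the Coxeter number and $(e_1,\ldots,e_\ell)=\exp(\Ac(W))$. Yoshinaga's criterion characterises freeness of a central arrangement $\mathcal B$ containing a hyperplane $H_0$ as follows: $\mathcal B$ is free if and only if the Ziegler restriction $(\mathcal B^{H_0},m^{H_0})$ is a free multiarrangement and the second coefficients of the characteristic polynomials of $\mathcal B$ and of $(\mathcal B^{H_0},m^{H_0})$ agree. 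Applied with $\mathcal B=\Shi^k$ or $\Cat^k$ and $H_0=H_z$: the multiarrangement side is free by Terao's theorem, and the required numerical equality is furnished by Athanasiadis's formulas $\chi(\Shi^k,t)=(t-1)(t-hk)^\ell$ and $\chi(\Cat^k,t)=(t-1)\prod_{i=1}^{\ell}(t-hk-e_i)$. Hence $\Shi^k$ and $\Cat^k$ are free; since their characteristic polynomials are already in the displayed factored form, Terao's factorization theorem forces $\exp(\Shi^k)=(1,hk,\ldots,hk)$ and $\exp(\Cat^k)=(1,hk+e_1,\ldots,hk+e_\ell)$, which is the assertion.

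The genuinely hard ingredient is Terao's multiderivation theorem, whose proof builds an explicit basis of the Coxeter multiderivation module out of the primitive derivation of the ring of $W$-invariants; establishing Yoshinaga's criterion (which rests on the Solomon--Terao formula relating the characteristic polynomial of a multiarrangement to its module of derivations, together with a specialization argument) is the other substantial piece. A technical point to watch is that $H_z$ is far from generic, so one must invoke the version of Yoshinaga's criterion valid for the chosen hyperplane and verify the Ziegler multiplicities at $H_z$ as above. By contrast Athanasiadis's characteristic-polynomial computation is a combinatorial finite-field count and, though not short, is comparatively routine; an equally laborious alternative would bypass Yoshinaga's criterion and lift an explicit basis of $D(\Ac(W),m)$ directly to a basis of $D(\Shi^k)$ and $D(\Cat^k)$.
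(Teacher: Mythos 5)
The paper does not actually prove this statement: it is quoted verbatim from Yoshinaga's paper [Yos04], and your outline reproduces essentially the argument given there --- Ziegler restriction to $H_z$ identifying the constant-multiplicity Coxeter multiarrangements $(\Ac(W),2k)$ resp.\ $(\Ac(W),2k+1)$, Terao's multiderivation theorem, a lifting criterion of Yoshinaga, and Athanasiadis's characteristic polynomial computations. Two small caveats: the ``$b_2$'' form of the lifting criterion you invoke is the later Abe--Yoshinaga refinement, whereas the original 2004 proof instead verifies local freeness of the cone along $H_z$ (which for these arrangements follows by induction, the localizations at flats inside $H_z$ being cones of the analogous arrangements for smaller root subsystems); and the asserted equality of second coefficients must be taken with the reduced characteristic polynomial $\chi(\Bc,t)/(t-1)$ of the cone, or equivalently with a correction term $|\Bc|-1$, since the Euler exponent $1$ contributes to $b_2(\Bc)$ but not to $b_2$ of the Ziegler restriction --- with that normalization your numerical check goes through. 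Note also that once freeness is established, the exponents already follow from Ziegler's theorem (the Ziegler restriction of a free arrangement is free with the exponents of the arrangement minus the Euler exponent $1$), so the final appeal to Terao's factorization theorem is an alternative rather than a necessity.
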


We need the following special case of a result by Abe and Terao (cf.~\cite[Thm.~1.6]{AbeTer15_IdealShi}).
\begin{proposition}
    \label{Prop_Shi-SimplRoots}
        Let $\Sigma \subseteq \Delta$ be a subset of the simple roots of $\Phi^+$.
        Then the arrangement 
        \[
            \Shi^k_{-\Sigma} := \Shi^k \setminus \{H^k_\alpha \mid \alpha \in \Sigma\}
        \]
        is free with
        \[
            \exp\left(\Shi^k_{-\Sigma}\right) = (1,hk-1,\ldots,hk-1,hk,\ldots,hk),
        \] 
        where the multiplicity of the exponent $hk-1$ is $|\Sigma|$.
\end{proposition}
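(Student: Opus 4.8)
The plan is to deduce Proposition~\ref{Prop_Shi-SimplRoots} from Theorem~\ref{Thm_MATRestGen} by exhibiting $\Shi^k_{-\Sigma}$ as a restriction $\Ac^X$ of a larger free arrangement $\Ac$ built up from $\Shi^k$ via a single MAT-step. Concretely, I would take $\Ac' := \Shi^k$, which is free with $\exp(\Shi^k) = (1,hk,\ldots,hk)$ by Yoshinaga's theorem, so the highest exponent $hk$ occurs with multiplicity $p = \ell$. Now let $\Sigma = \{\alpha_1,\ldots,\alpha_m\} \subseteq \Delta$ and consider adjoining the $m$ hyperplanes $H^k_{\alpha_i}$ — wait, those already lie in $\Shi^k$; instead one adjoins new hyperplanes $K_1,\ldots,K_m$ whose common intersection $X$ is $m$-codimensional and such that $(\Shi^k \cup \{K_i\})^{K_i}$ has the right cardinality, i.e.\ $\Ac_k \cup \{K_i\}$ is an MAT-step. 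The natural choice is dictated by the structure of $\Shi^k$: the hyperplanes $H^k_\alpha$ for $\alpha \in \Sigma$ are the ones whose removal we want to realize, so I would pick the $K_i$ to be generic enough that the restriction map $\Ac \setminus \Cc \to \Ac^X$ of Lemma~\ref{Lem_MATSurj} identifies $\Ac^X$ with $\Shi^k_{-\Sigma}$.

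The cleaner route, and the one I would actually pursue, is to cite \cite[Thm.~1.6]{AbeTer15_IdealShi} directly: the statement as written is explicitly labelled "a special case of a result by Abe and Terao," so the intended proof is simply to specialize their general theorem on ideal-Shi arrangements to the situation where the ideal-type datum is the empty ideal and the deleted simple-root hyperplanes are indexed by $\Sigma$. First I would recall the precise form of \cite[Thm.~1.6]{AbeTer15_IdealShi}, which for a subset $\Sigma \subseteq \Delta$ asserts freeness of $\Shi^k$ with the hyperplanes $H^k_\alpha$, $\alpha \in \Sigma$, removed and computes the exponents in terms of $h$, $k$, and $|\Sigma|$. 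Second, I would match the indexing: deleting $|\Sigma|$ hyperplanes from a free arrangement with exponents $(1,hk,\ldots,hk)$, each deletion dropping one copy of the top exponent by one via Proposition~\ref{Prop_AddDelRestr}-type bookkeeping, yields $(1,hk-1,\ldots,hk-1,hk,\ldots,hk)$ with $hk-1$ appearing exactly $|\Sigma|$ times. Third, I would verify that this bookkeeping is consistent — that each successive deletion $H^k_\alpha$ is a genuine addition-deletion triple with the relevant restriction free — which is exactly the content extracted from \cite{AbeTer15_IdealShi}.

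The main obstacle is that Proposition~\ref{Prop_AddDelRestr} only guarantees the exponent drop when \emph{both} $\Ac$ and $\Ac \setminus \{H\}$ are known to be free; iterating the deletion over all of $\Sigma$ requires knowing freeness at every intermediate stage, and this is precisely where one cannot avoid invoking \cite[Thm.~1.6]{AbeTer15_IdealShi} (or reproving a chunk of it). So the honest plan is: state the result, observe it is the $\Ic = \varnothing$, $\Sigma \subseteq \Delta$ instance of the Abe--Terao theorem, and confirm the exponent formula by the deletion count above, noting that the multiplicity of $hk-1$ equals $|\Sigma|$ because exactly the $|\Sigma|$ hyperplanes $\{H^k_\alpha \mid \alpha \in \Sigma\}$ are removed and each removal decrements a distinct copy of the top exponent. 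If instead one wants a self-contained argument within the present paper's framework, the fallback is the MAT-step construction sketched in the first paragraph: identify $\Shi^k_{-\Sigma}$ as $\Ac^X$ for an explicit $\Cc$ of size $|\Sigma|$ inside a block of an MAT-partition of a suitable free super-arrangement, and apply Theorem~\ref{Thm_MATRestGen}; the hard part there is checking conditions (1)--(3) of Theorem~\ref{Thm_MAT}, in particular the cardinality condition $|\Ac_k| - |(\Ac_k \cup \{K\})^K| = k$, which forces a careful choice of the $K_i$ adapted to the combinatorics of the affine root system.
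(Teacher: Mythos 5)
Your main route---observing that the statement is precisely the advertised special case of \cite[Thm.~1.6]{AbeTer15_IdealShi} and citing it---is exactly what the paper does: it states the proposition with no proof beyond that citation. The MAT-step fallback you sketch in the first paragraph is not needed and is not what the paper pursues.
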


Next we observe that in the case $\Sigma = \Delta$, rejoining the hyperplanes 
$H^k_\alpha$ for $\alpha \in \Delta$
to $\Shi^k_{-\Delta}$ is an MAT-step.
\begin{lemma}
    \label{Lem_Shi-SimplRootMATStep}
    The addition $\Shi^k = \Shi^k_{-\Delta}\dot{\cup}\{H^k_\alpha \mid \alpha \in \Delta\}$ is an MAT-step.
\end{lemma}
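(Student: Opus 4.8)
The plan is to verify the three conditions (1)--(3) of Theorem \ref{Thm_MAT} for the addition of the hyperplanes $\{H^k_\alpha \mid \alpha \in \Delta\}$ to the free arrangement $\Ac' := \Shi^k_{-\Delta}$, whose exponents are $(1,hk-1,\ldots,hk-1,hk,\ldots,hk)$ with the exponent $hk-1$ appearing $\ell = |\Delta|$ times, by Proposition \ref{Prop_Shi-SimplRoots}. The highest exponent is $hk$, but note that for an MAT-step we add $q = \ell$ hyperplanes each on degree level $e = hk-1$; so I should first observe that the correct ambient free arrangement at that stage has highest exponent $e = hk-1$ with multiplicity $p \geq \ell$. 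Wait -- more carefully: in the notation of Theorem \ref{Thm_MAT} one takes $\Ac' = \Shi^k_{-\Delta}$, and here the highest exponent is $hk$, not $hk-1$. I would instead apply the MAT-step to pass from $\Shi^k_{-\Delta}$ (exponents with $hk-1$ of multiplicity $\ell$ and $hk$ of multiplicity $1$) by adding the $\ell$ hyperplanes $H^k_\alpha$; condition (3) will force $e = hk-1$, which is consistent since $e = hk - 1 < hk$ is a valid "second-highest" block, and the resulting exponents become $(1, hk,\ldots, hk)$, matching Yoshinaga's theorem for $\Shi^k$. So the roles are: $q = \ell$, $e = hk-1$, $p$ = multiplicity of $hk-1$ in $\exp(\Shi^k_{-\Delta})$, which is $\ell$; thus $q \le p$ holds with equality.

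The three conditions to check: \textbf{(1)} $X := \bigcap_{\alpha \in \Delta} H^k_\alpha$ is $\ell$-codimensional. Since the $\ell$ linear forms $\alpha - kz$ for $\alpha \in \Delta$ are linearly independent in $(V')^*$ (the $\alpha$ are a basis of $V^* \subseteq (V')^*$ and $z$ is independent of them), their common kernel has codimension $\ell$ in $V' = \RR^{\ell+1}$. \textbf{(2)} $X \not\subseteq \bigcup_{H \in \Shi^k_{-\Delta}} H$: the point $X$ is a line in $V'$; I would exhibit an explicit point on $X$ lying in the complement, e.g. solve $\alpha(v) = k z(v)$ for all simple $\alpha$ with $z(v) = 1$, giving a point $v$ with $\alpha(v) = k$ for all $\alpha \in \Delta$, hence $\beta(v) = k\,\h(\beta)$ for every positive root $\beta$; one then checks this point avoids every hyperplane $H^j_\beta$ ($\beta \in \Phi^+$, $-k+1 \le j \le k$, excluding $j=k$ for simple $\beta$) and $H_z$, i.e. $\beta(v) - jz(v) = k\,\h(\beta) - j \neq 0$ and $z(v) = 1 \neq 0$ -- the first being nonzero since $k\,\h(\beta) \ge k > j$ when $\h(\beta) \ge 1$ and $j \le k$, with the case $\h(\beta)=1$, $j = k$ excluded precisely because those are the removed hyperplanes $H^k_\alpha$. \textbf{(3)} For each $\alpha \in \Delta$, $|\Shi^k_{-\Delta}| - |(\Shi^k_{-\Delta} \cup \{H^k_\alpha\})^{H^k_\alpha}| = e = hk-1$. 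Equivalently, the number of hyperplanes $H \in \Shi^k_{-\Delta}$ with $H \cap H^k_\alpha = H' \cap H^k_\alpha$ for a fixed such intersection is counted, and one must show exactly $hk-1$ distinct intersections arise minus... more precisely $|\Shi^k_{-\Delta}| - |(\cdot)^{H^k_\alpha}|$ equals the number of hyperplanes of $\Shi^k_{-\Delta}$ meeting $H^k_\alpha$ in "coincident" loci, and I expect this to reduce to a localization computation: the localization of $\Shi^k$ (or $\Shi^k_{-\Delta}$) at a generic point of $H^k_\alpha$ should be combinatorially a rank-$2$ (near-pencil-type) arrangement whose size is $hk$ resp. $hk-1$.

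The main obstacle will be condition (3): it requires an honest count of how $H^k_\alpha$ meets the other hyperplanes of $\Shi^k_{-\Delta}$. The cleanest route is probably to first establish it for $\Shi^k$ itself -- that $|\Shi^k| - |(\Shi^k)^{H^k_\alpha}| = hk$ for every $\alpha \in \Delta$ -- which follows from the known restriction structure of extended Shi arrangements (or from applying Proposition \ref{Prop_AddDelRestr} to the pair $\Shi^k \supseteq \Shi^k_{-\{\alpha\}}$ together with Proposition \ref{Prop_Shi-SimplRoots} with $\Sigma = \{\alpha\}$, which gives the deletion count $|\Shi^k| - |(\Shi^k)^{H^k_\alpha}| = hk$ directly from comparing exponents). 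Then, to descend to $\Shi^k_{-\Delta}$, I note that removing the other simple hyperplanes $H^k_{\alpha'}$, $\alpha' \ne \alpha$, changes this intersection count by exactly $\ell - 1$: each $H^k_{\alpha'}$ meets $H^k_\alpha$ transversally (codimension $2$, since $\alpha, \alpha', z$ are independent) in a locus not shared with any other hyperplane generically, so $|(\Shi^k_{-\Delta} \cup \{H^k_\alpha\})^{H^k_\alpha}| = |(\Shi^k)^{H^k_\alpha}| - (\ell-1)$ while $|\Shi^k_{-\Delta}| = |\Shi^k| - (\ell - 1)$, hence the difference $hk$ drops to $hk - 1 = e$. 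Once (1)--(3) are in hand, Theorem \ref{Thm_MAT} applies verbatim and $\Shi^k = \Shi^k_{-\Delta} \dot\cup \{H^k_\alpha \mid \alpha \in \Delta\}$ is an MAT-step, with the resulting exponents $(1,hk,\ldots,hk)$ automatically consistent with Yoshinaga's theorem.
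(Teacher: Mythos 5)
Your overall strategy --- verify conditions (1)--(3) of Theorem \ref{Thm_MAT} for $\Ac'=\Shi^k_{-\Delta}$ --- is the paper's, and your treatments of (1) and (2) are fine; for (2) you exhibit an explicit point of $X$ in the complement (the $v$ with $z(v)=1$ and $\alpha(v)=k$ for all $\alpha\in\Delta$, so $\beta(v)-jz(v)=k\,\h(\beta)-j$ vanishes only when $\h(\beta)=1$ and $j=k$), which is just the paper's contradiction argument run forwards. But there are two genuine problems. First, you misstate $\exp(\Shi^k_{-\Delta})$: Proposition \ref{Prop_Shi-SimplRoots} with $\Sigma=\Delta$ gives $hk-1$ with multiplicity $|\Delta|=\ell$, and since there are only $\ell+1$ exponents in total they are exactly $(1,hk-1,\ldots,hk-1)$ --- no exponent $hk$ survives. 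Your claim that $hk$ remains with multiplicity one, patched by reading $e=hk-1$ as a ``valid second-highest block,'' is not compatible with Theorem \ref{Thm_MAT}: there $e$ must be the \emph{highest} exponent and $p$ its multiplicity, so if $hk$ were still present condition (3) would demand $|\Ac'|-|\Ac''_j|=hk$ and the conclusion $q\le p$ would force $q\le 1$; the step would fail. Your argument only survives because that premise is false, so you must actually count the exponents and conclude $e=hk-1$, $p=q=\ell$.

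Second, your verification of condition (3) has a gap at the descent from $\Shi^k$ to $\Shi^k_{-\Delta}$. You assert that each removed $H^k_{\alpha'}$, $\alpha'\neq\alpha$, meets $H^k_\alpha$ ``in a locus not shared with any other hyperplane generically,'' so that the restriction to $H^k_\alpha$ loses exactly $\ell-1$ elements. That claim is the entire content of the step and is not free: one must rule out that some $H^j_\beta\in\Shi^k_{-\Delta}$ contains $H^k_\alpha\cap H^k_{\alpha'}$, i.e.\ that $\beta-jz=a(\alpha-kz)+b(\alpha'-kz)$; this forces $\beta=a\alpha+b\alpha'$ with $a,b\in\ZZ_{\ge0}$ and $j=(a+b)k\ge 2k$ unless $\beta\in\{\alpha,\alpha'\}$ --- a rank-two root-system computation you never perform. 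The paper sidesteps this entirely: it applies Proposition \ref{Prop_Shi-SimplRoots} to both $\Sigma=\Delta$ and $\Sigma=\Delta\setminus\{\alpha\}$, feeds the resulting pair of free arrangements into Proposition \ref{Prop_AddDelRestr} to get $\exp\bigl((\Shi^k_{-\Delta}\cup\{H^k_\alpha\})^{H^k_\alpha}\bigr)=(1,hk-1,\ldots,hk-1)$ with $\ell-1$ copies of $hk-1$, and then reads off $|\Ac'|-|\Ac''_j|=hk-1$ because the sum of the exponents of a free arrangement equals its cardinality. Either adopt that route or supply the missing computation.
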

\begin{proof}
    We have to verify conditions (1)--(3) from Theorem \ref{Thm_MAT}.
    Condition (1) is clear, since the simple roots $\Delta$  are linearly independent and so are the
    linear forms $\alpha-kz$ for $\alpha \in \Delta$.
    Condition (3) follows from Propositions \ref{Prop_AddDelRestr} and \ref{Prop_Shi-SimplRoots} and  
    the fact that the sum of the exponents of a free arrangement coincides with its cardinality, cf.~\cite[Thm.~4.23]{OrTer92_Arr}.
    
    Finally, we need to show condition (2), i.e.\ 
    that $X = \cap_{\alpha\in\Delta}H^k_\alpha \nsubseteq H$ for any $H = H^j_\beta \in \Shi^k_{-\Delta}$. For a contradiction, 
    we assume the contrary.
    That is, there are an index $-k+1\leq j \leq k$ and a positive root $\beta \in \Phi^+$ such that
    \[
        \beta-jz =  \sum_{\alpha \in \Delta} n_\alpha(\alpha-kz)
    \]
    for suitable $n_\alpha$.
    But then $\beta = \sum_{\alpha \in \Delta} n_\alpha \alpha$, so $n_\alpha \in \ZZ_{\geq 0}$ for all $\alpha \in \Delta$
    and $n_\alpha \geq 1$ for some $\alpha \in \Delta$ since $\beta$ is a positive root.
    Therefore, 
    \[
        j = \sum_{\alpha \in \Delta}n_\alpha k \geq k.
    \]
    Hence we must have $j=k$ and thus $\beta$ belongs to  $\Phi^+\setminus\Delta$.
    Consequently, in the expression $\beta = \sum_{\alpha \in \Delta} n_\alpha \alpha$ at least two coefficients 
    must be positive. 
    But then $k=j \geq 2k$, which is absurd, as $k>0$ in Definition \ref{Def_IdealShiCatalan}.
\end{proof}

In \cite[Sec.~5]{AbeTer18_MultAddDelRes}, Abe and Terao showed that an ideal-Shi arrangement
and in particular an extended Catalan arrangement can be constructed
via MAT-steps starting from the corresponding extended Shi arrangement.
Hence we are exactly in the setting of Theorem \ref{Thm_MATRestGen}
with $\Ac' = \Shi^k_{-\Delta}$, $\Bc = \{H^k_\alpha \mid \alpha \in \Delta\}\cup\{ H_{\beta}^{-k} \mid \beta \in \Ic \}$
and
\[
    \pi = \pi_\Ic^k := \left(\pi_{1,\Ic}^k| \cdots | \pi_{m_\Ic+1,\Ic}^k\right),
\]
where 
\begin{align*}
    \pi_{1,\Ic}^k = \, &\{H^k_\alpha \mid \alpha \in \Delta\}, \text{ and } \\
    \pi_{t+1,\Ic}^k := \, &\{ H_{\beta}^{-k} \mid \beta \in \Ic, \h(\beta) = t \},
\end{align*}
for $1 \leq t \leq m_\Ic$.
Set $\pi_{j,\Ic}^k = \emptyset$ for $j > m_\Ic+1$.
Then the arguments from the proofs in \cite[Sec.~5]{AbeTer18_MultAddDelRes}, 
together with Lemma \ref{Lem_Shi-SimplRootMATStep} and Theorem \ref{Thm_MATRestGen} readily imply the following.

\begin{theorem}
    \label{Thm_ResShiCat} %
    Let $\Shi_\Ic^k$ be an ideal-Shi arrangement
    for the order ideal $\Ic \subseteq \Phi^+$. 
    Then for each $1 \leq t \leq m_\Ic+1$ and each $|\pi_{t+1,\Ic}^k| \leq q \leq |\pi_{t,\Ic}^k|$
    there is a $\Cc \subseteq \pi_{t,\Ic}^k$ with $|\Cc| = q$
    such that for $X := \cap_{H \in \Cc}H$ 
    the restriction $\left(\Shi_\Ic^k\right)^{X}$ is free with exponents 
    \[
        \exp\left(\left(\Shi_\Ic^k\right)^{X}\right) = \left(1,hk+e_1^{\Ic},\ldots,hk+e_{\ell-q}^{\Ic}\right)_\leq,
    \] 
    where
    \begin{align*}
        e_r^{\Ic} = |\{ j \mid |\pi_{{j+1},\Ic}^k| \geq \ell-r+1\}|.
    \end{align*}
\end{theorem}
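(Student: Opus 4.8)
The plan is to assemble Theorem~\ref{Thm_ResShiCat} from precisely the ingredients displayed just before it: the fact (established by Abe and Terao in \cite[Sec.~5]{AbeTer18_MultAddDelRes}) that an ideal-Shi arrangement $\Shi_\Ic^k$ arises from $\Shi^k_{-\Delta}$ by a sequence of MAT-steps indexed by the partition $\pi_\Ic^k = (\pi^k_{1,\Ic}|\cdots|\pi^k_{m_\Ic+1,\Ic})$, the computation of $\exp(\Shi^k_{-\Delta})$ in Proposition~\ref{Prop_Shi-SimplRoots}, Lemma~\ref{Lem_Shi-SimplRootMATStep}, and the generalized restriction result Theorem~\ref{Thm_MATRestGen}. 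First I would record that $\Ac' := \Shi^k_{-\Delta}$ is free with $\exp(\Ac') = (1, hk-1,\ldots,hk-1, hk,\ldots,hk)$ where $hk-1$ has multiplicity $|\Delta| = \ell$; since $\Shi^k$ is an $(\ell+1)$-arrangement, this means $\Ac'$ has the highest exponent $hk$ occurring with multiplicity $1$ and the next exponent $hk-1$ with multiplicity $\ell$. By Lemma~\ref{Lem_Shi-SimplRootMATStep} the first block $\pi^k_{1,\Ic} = \{H^k_\alpha \mid \alpha \in \Delta\}$ is added to $\Ac'$ by an MAT-step, yielding $\Shi^k$ with $\exp(\Shi^k) = (1, hk,\ldots,hk)$; and by the results of \cite[Sec.~5]{AbeTer18_MultAddDelRes} the remaining blocks $\pi^k_{t+1,\Ic}$, $1 \le t \le m_\Ic$, consisting of the hyperplanes $H^{-k}_\beta$ for $\beta \in \Ic$ of height $t$, are successively added via MAT-steps, terminating in $\Shi^k_\Ic$. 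Thus $\Shi^k_\Ic = \Ac' \dot\cup \Bc$ with $\Bc = \{H^k_\alpha \mid \alpha\in\Delta\} \cup \{H^{-k}_\beta \mid \beta \in \Ic\}$ partitioned by $\pi = \pi^k_\Ic$, which is exactly the hypothesis of Theorem~\ref{Thm_MATRestGen}.

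Next I would invoke Theorem~\ref{Thm_MATRestGen} directly: for each $1 \le t \le m_\Ic+1$ (i.e.\ each block $\pi_t = \pi^k_{t,\Ic}$ in the partition of $\Bc$) and each $p_{t+1,\Ic} \le q \le p_{t,\Ic}$, there is a subset $\Cc \subseteq \pi^k_{t,\Ic}$ with $|\Cc| = q$ such that, setting $X := \cap_{H\in\Cc} H$, the restriction $(\Shi^k_\Ic)^X$ is free with $\exp((\Shi^k_\Ic)^X) = (f_1,\ldots,f_{(\ell+1)-q})_\le$, where $(f_1,\ldots,f_{\ell+1})_\le = \exp(\Shi^k_\Ic)$. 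The only remaining task is bookkeeping on the exponents. By Yoshinaga's theorem (or the Abe--Terao construction), $\exp(\Shi^k_\Ic)$ has the form $(1, hk+e^\Ic_1, \ldots, hk+e^\Ic_\ell)$ with the $e^\Ic_r$ given by the block-size (dual-partition) formula of Theorem~\ref{Thm_MATRestGen}/Remark~\ref{rem:MAT-free}(b) applied to the partition $\pi^k_\Ic$; since the first block $\pi^k_{1,\Ic}$ has size $\ell$ (the number of simple roots) and later blocks have sizes $p_{t+1,\Ic} = |\{\beta\in\Ic : \h(\beta)=t\}|$, the formula unwinds to $e^\Ic_r = |\{j \mid p_{j+1,\Ic} \ge \ell - r + 1\}|$ after the appropriate index shift. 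Because the smallest exponent $1$ is always retained (it corresponds to the Euler-type derivation, which has degree below every MAT-threshold and hence survives all restrictions), truncating $\exp(\Shi^k_\Ic)$ to its first $(\ell+1)-q$ entries gives exactly $(1, hk+e^\Ic_1, \ldots, hk+e^\Ic_{\ell-q})_\le$, which is the claimed formula.

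I would then remark that the accompanying basis statement of Theorem~\ref{Thm_MATRestGen} also transfers, giving a basis $\theta_1,\ldots,\theta_{\ell+1}$ of $D(\Shi^k_\Ic)$ whose first $(\ell+1)-q$ members restrict to a basis of $D((\Shi^k_\Ic)^X)$ — though this is optional for the present statement. Finally, specializing $\Ic = \Phi^+$ recovers the extended Catalan case $\Cat^k = \Shi^k_{\Phi^+}$, and $k$-fold use together with Corollary~\ref{Coro_MAT-stepRestriction} (applied to the MAT-step of Lemma~\ref{Lem_Shi-SimplRootMATStep}) handles the pure extended Shi arrangement $\Shi^k$; combined, these yield Theorem~\ref{Thm_ResShiCatTF} that $\Shi^k$, $\Shi^k_\Ic$ and $\Cat^k$ are all accurate, since every value $1 \le d \le \ell+1$ is hit by choosing $t$ and $q$ appropriately in the ranges above (this requires checking that as $t$ ranges over $1,\ldots,m_\Ic+1$ and $q$ over $[p_{t+1,\Ic}, p_{t,\Ic}]$, the quantity $q$ attains every integer in $[0, p_{1,\Ic}] = [0,\ell]$, which follows from the telescoping of the intervals $[p_{t+1,\Ic},p_{t,\Ic}]$ and the monotonicity $p_{1,\Ic} \ge p_{2,\Ic} \ge \cdots$).

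The step I expect to be the genuine obstacle is not in the proof of Theorem~\ref{Thm_ResShiCat} itself — which, given Theorem~\ref{Thm_MATRestGen}, is essentially a translation exercise — but in verifying cleanly that the Abe--Terao MAT-step construction of $\Shi^k_\Ic$ from $\Shi^k_{-\Delta}$ genuinely satisfies the precise hypothesis of Theorem~\ref{Thm_MATRestGen}, namely that the very first block added is $\pi^k_{1,\Ic} = \{H^k_\alpha \mid \alpha\in\Delta\}$ via Lemma~\ref{Lem_Shi-SimplRootMATStep} and that the subsequent height-indexed blocks are added in the order prescribed by \cite[Sec.~5]{AbeTer18_MultAddDelRes}; one must ensure these two constructions are compatible and that conditions (1)--(3) of Theorem~\ref{Thm_MAT} hold at every stage of the concatenated sequence, in particular that the exponent thresholds $k$ in the MAT-steps for the $H^{-k}_\beta$ blocks match the height stratification. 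The exponent-index shift ($t \leftrightarrow$ block number $t+1$ for $t \ge 1$, with block $1$ being the simple-root block) is the subtle point of the bookkeeping and is where an off-by-one error would creep in.
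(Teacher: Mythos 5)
Your proposal follows essentially the same route as the paper: the paper's proof of Theorem \ref{Thm_ResShiCat} consists precisely of observing that $\Shi^k_\Ic$ is built from $\Ac' = \Shi^k_{-\Delta}$ by the MAT-step of Lemma \ref{Lem_Shi-SimplRootMATStep} followed by the height-stratified MAT-steps of Abe--Terao, and then invoking Theorem \ref{Thm_MATRestGen} with the partition $\pi^k_\Ic$. One small correction to your bookkeeping: since $\Shi^k_{-\Delta}$ lives in $V'=\RR^{\ell+1}$ and Proposition \ref{Prop_Shi-SimplRoots} gives the exponent $hk-1$ with multiplicity $|\Delta|=\ell$ alongside the exponent $1$, the exponent $hk$ in fact occurs with multiplicity $0$, not $1$; the highest exponent of $\Ac'$ is $hk-1$ with multiplicity $\ell$, which is exactly what permits the MAT-step to add all $\ell$ hyperplanes $H^k_\alpha$, $\alpha\in\Delta$ (your stated count would cap the step at a single hyperplane via the bound $q\leq p$ in Theorem \ref{Thm_MAT}). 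With that count repaired, the rest of your argument, including the index shift in $e^{\Ic}_r$ and the accuracy conclusion, matches the paper.
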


We close this section with the following special case of Theorem \ref{Thm_ResShiCat}.

\begin{corollary}[Theorem \ref{Thm_ResShiCatTF}]
    Extended Shi arrangements $\Shi^k$ and ideal-Shi arrangements $\Shi^k_\Ic$ are accurate.
    In particular, extended Catalan arrangements $\Cat^k$ are accurate.
\end{corollary}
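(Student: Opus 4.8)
The plan is to read the corollary off Theorem \ref{Thm_ResShiCat} by a suitable choice of its parameters; the remaining work is essentially bookkeeping. Fix an ideal $\Ic \subseteq \Phi^+$; the arrangement $\Shi^k_\Ic$ is essential of rank $\ell+1$, so accuracy asks that for each $1 \le d \le \ell+1$ there be an $X \in L(\Shi^k_\Ic)$ with $\dim X = d$ such that $(\Shi^k_\Ic)^X$ is free with exponents the first $d$ entries of $\exp(\Shi^k_\Ic)$ in increasing order. First I would apply Theorem \ref{Thm_ResShiCat} with $q = 0$ (take $t = m_\Ic + 1$, so that $p_{t+1,\Ic} = p_{m_\Ic + 2,\Ic} = 0$, $\Cc = \varnothing$, $X = V'$) to obtain
\[
    \exp\left(\Shi^k_\Ic\right) = \left(1, hk + e_1^\Ic, \ldots, hk + e_\ell^\Ic\right).
\]
Since $h = m_{\Phi^+} + 1 \ge 2$ and $k \ge 1$, we have $hk + e_r^\Ic \ge hk \ge 2 > 1$ for all $r$, while $r \mapsto e_r^\Ic = |\{ j \mid p_{j+1,\Ic} \ge \ell - r + 1 \}|$ is visibly non-decreasing; hence the tuple above is already written in non-decreasing order, i.e.\ it equals $\exp(\Shi^k_\Ic)_\le$, and its first $d$ entries are $\left( 1, hk + e_1^\Ic, \ldots, hk + e_{d-1}^\Ic \right)$.

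Next I would check that, as $t$ runs over $\{1, \ldots, m_\Ic + 1\}$ and $q$ over $\{p_{t+1,\Ic}, \ldots, p_{t,\Ic}\}$, the value of $q$ sweeps out all of $\{0, 1, \ldots, \ell\}$. This uses $p_{1,\Ic} = |\pi_{1,\Ic}^k| = |\Delta| = \ell$, $p_{m_\Ic + 2,\Ic} = 0$, and monotonicity $p_{1,\Ic} \ge p_{2,\Ic} \ge \cdots \ge p_{m_\Ic + 1,\Ic} \ge 0$ of the block sizes: then the intervals $[p_{t+1,\Ic}, p_{t,\Ic}]$ for $t = 1, \ldots, m_\Ic + 1$ are nested end to end and cover $[0, \ell]$. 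The inequality $p_{1,\Ic} \ge p_{2,\Ic}$ is immediate ($p_{2,\Ic}$ counts the simple roots lying in $\Ic$, of which there are at most $|\Delta| = \ell$); for $t \ge 2$ the numbers $p_{t,\Ic}$ coincide with the block sizes of the MAT-partition $\pi_\Ic$ of the ideal arrangement $\Ac_\Ic$ — both count roots of height $t-1$ in $\Ic$ — and these are non-increasing by Remark \ref{rem:MAT-free}(c); alternatively one may invoke the corresponding monotonicity directly for the ordered partition $\pi_\Ic^k$ produced in the setting of Theorem \ref{Thm_MATRestGen}.

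Finally, given $1 \le d \le \ell+1$, set $q := \ell + 1 - d \in \{0, \ldots, \ell\}$, pick $t$ with $p_{t+1,\Ic} \le q \le p_{t,\Ic}$, and take $\Cc \subseteq \pi_{t,\Ic}^k$ and $X := \cap_{H \in \Cc} H$ as furnished by Theorem \ref{Thm_ResShiCat}: then $(\Shi^k_\Ic)^X$ is free with the $d$ exponents $\left( 1, hk + e_1^\Ic, \ldots, hk + e_{d-1}^\Ic \right)_\le$; since a free arrangement has as many exponents as the dimension of its ambient space, $\dim X = d$, and by the first paragraph these exponents are precisely the first $d$ of $\exp(\Shi^k_\Ic)$ in increasing order. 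This is accuracy in the sense of Definition \ref{Def_TF}. Taking $\Ic = \varnothing$ gives the extended Shi arrangement $\Shi^k$, and taking $\Ic = \Phi^+$ gives the extended Catalan arrangement $\Cat^k$, where $(e_1^{\Phi^+}, \ldots, e_\ell^{\Phi^+}) = \exp(\Ac(W))$, recovering Yoshinaga's exponents. The only step that is not pure formalism is the covering claim of the second paragraph, i.e.\ the monotonicity of the block sizes $p_{t,\Ic}$; the rest is a straight reading of Theorem \ref{Thm_ResShiCat} together with the inequality $hk > 1$.
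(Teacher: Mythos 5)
Your derivation is correct and follows exactly the route the paper intends: the corollary is read off Theorem \ref{Thm_ResShiCat} (the paper states it as an immediate special case and omits the bookkeeping), and you correctly supply the only nontrivial verifications, namely that the block sizes $p_{1,\Ic}=\ell\geq p_{2,\Ic}\geq\cdots\geq p_{m_\Ic+1,\Ic}\geq 0$ are non-increasing so that the intervals $[p_{t+1,\Ic},p_{t,\Ic}]$ cover $\{0,\ldots,\ell\}$, and that the listed exponents are already in increasing order since $hk>1$ and $r\mapsto e_r^{\Ic}$ is non-decreasing. Your identification of the cases $\Ic=\varnothing$ and $\Ic=\Phi^+$ with $\Shi^k$ and $\Cat^k$ matches the paper's Definition \ref{Def_IdealShiCatalan} as well.
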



\section{Complements and Examples}

In our final section we discuss other notions of freeness in relation to 
accuracy. 

Further we also study accurate graphic arrangements. 
Here we show that accuracy is neither preserved under taking localizations nor under restrictions.  

Finally, in view of Theorem \ref{thm:otsconj}, we consider accuracy among complex reflection arrangements and their restrictions.

\subsection{Almost accurate arrangements}
\label{ssect:divfree}

We begin by weakening the property from Definition \ref{Def_TF} by dropping the condition that the exponents of the free
restriction $\Ac^{X}$ match those of $\Ac$ ordered by size.

\begin{definition}
	\label{Def_aTF}
	An arrangement $\Ac$ is said to be \emph{almost accurate}
	if $\Ac$ is free with exponents $\exp(\Ac) = (e_1, e_2, \ldots, e_\ell)$ 
	and for every $1 \leq d \leq \ell$ there exists an intersection $X$ of hyperplanes  from  
	$\Ac$ of dimension $d$ such that $\Ac^{X}$ is free with $\exp(\Ac^{X}) \subseteq \exp(\Ac)$.
\end{definition}

Clearly, if $\Ac$ is accurate then it is almost accurate. 
In particular, by Theorem \ref{Thm_MATRest},
MAT-free arrangements are almost accurate.
However, the converse is false, see Example \ref{Ex_SSbnTF}. 

Note that a product of arrangements is almost accurate if and only if each factor is almost accurate, by \cite[Prop.~2.14, Prop.~4.28]{OrTer92_Arr}.

\bigskip

Next we indicate that there are several natural classes of
free arrangements that are almost accurate.

For an arrangement $\Ac$ we denote its characteristic polynomial by $\chi(\Ac,t)$, see  \cite[Sec.~2.3]{OrTer92_Arr}.
Next we recall the key result from \cite{abe:divfree}.

\begin{theorem}[{\cite[Thm.~1.1]{abe:divfree}}]
	\label{thm:abe-div}
	Suppose there is a hyperplane $H$ in $\Ac$ such that the restriction 
	$\Ac^H$ is free and that 
	$\chi(\Ac^H,t)$ divides $\chi(\Ac,t)$.
	Then $\Ac$ is free.
\end{theorem}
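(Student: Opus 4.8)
The plan is to prove this statement — Abe's Division Theorem — by induction on the rank $\ell = \rk(\Ac)$, using Yoshinaga's freeness criterion as the engine and, as the crucial numerical input, the local–global formulas relating the coefficients of the characteristic polynomial to the exponents of a Ziegler multirestriction. We may assume $\Ac$ essential (the general case reduces to this by essentialization), so that $\chi(\Ac,t) = (t-1)\chi_0(\Ac,t)$ with $\deg\chi_0 = \ell-1$. If $\ell \le 2$ every arrangement is free and there is nothing to prove, so the first genuine case is $\ell = 3$. Here $\Ac^H$ is a rank-two arrangement on the plane $H$, hence automatically free with $\chi(\Ac^H,t) = (t-1)(t-|\Ac^H|+1)$; the divisibility hypothesis then forces $\chi_0(\Ac,t) = (t-|\Ac^H|+1)(t-c)$ with $c = |\Ac|-|\Ac^H|\in\ZZ_{>0}$ (the value of $c$ being pinned down by matching the subleading coefficient). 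Now pass to the Ziegler multirestriction $(\Ac^H,m^H)$, a multiarrangement on the plane $H$, which is automatically free with exponents $(d_1\le d_2)$, $d_1+d_2 = |\Ac|-1$. The Abe–Terao–Wakefield local–global formula expresses the constant term of $\chi_0(\Ac,t)$ as $d_1d_2$ corrected by a sum of purely local contributions over the rank-two flats; comparing this with the factorization of $\chi_0$ just obtained, and invoking the extremal bounds on the exponents of a planar multiarrangement to exclude the spurious splittings of the multiset $\{d_1,d_2\}$, one concludes $\chi_0(\Ac,t) = (t-d_1)(t-d_2)$. Yoshinaga's criterion then yields that $\Ac$ is free, with $\exp(\Ac) = (1,d_1,d_2)$.

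For the inductive step $\ell\ge 4$, the goal is to verify the two hypotheses of Yoshinaga's higher-rank freeness criterion for $\Ac$ at $H$: (i) $\Ac_X$ is free for every $X\in L(\Ac)$ with $0 < \rk(X) < \ell$ (local freeness); and (ii) the Ziegler multirestriction $(\Ac^H,m^H)$ is free with exponents whose elementary symmetric functions reproduce the coefficients of $\chi_0(\Ac,t)$. Ingredient (ii) proceeds exactly as in the base case: the divisibility together with Terao's Factorization Theorem applied to the free arrangement $\Ac^H$ gives $\chi_0(\Ac,t) = (t-c)\prod_i(t-d_i)$ with $(d_i) = \exp(\Ac^H)$, and the local–global formula — whose local terms are now under control because each proper localization $\Ac_X$ is already known to be free by the inductive hypothesis — identifies these roots with the exponents of $(\Ac^H,m^H)$. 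Ingredient (i) is the delicate point, since the hypothesis ``$\chi(\Ac^H,t)\mid\chi(\Ac,t)$'' is not visibly hereditary under localization. To route around this, for a fixed $X$ one locates inside $\Ac_X$ a hyperplane $H_X$ for which the analogous divisibility of characteristic polynomials holds — exploiting the compatibility of $\chi$ and of restriction with localization, and that the roots of $\chi((\Ac_X)^{H_X},t)$ stay non-negative integers — so that the inductive hypothesis applies to the strictly smaller-rank arrangement $\Ac_X$ and yields its freeness.

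\textbf{Main obstacle.} The crux is the passage from a divisibility relation between the characteristic polynomials $\chi(\Ac^H,t)$ and $\chi(\Ac,t)$ to \emph{exact} control over the exponents of the Ziegler multirestriction $(\Ac^H,m^H)$: the Abe–Terao–Wakefield local–global formulas supply the bridge, but one still has to rule out the wrong splittings of the exponent multiset, which is where sharp inequalities for multiarrangement exponents on a plane are needed. A secondary but real difficulty is the bookkeeping that feeds the induction on $\ell$: divisional freeness of $\Ac$ does not transparently descend to the localizations $\Ac_X$, so the localization-compatibility step above is what makes the induction close. Once these are in place, Yoshinaga's criterion completes the proof, and as a byproduct the exponents of $\Ac$ are read off from the roots of $\chi_0(\Ac,t)$, so that Terao's Factorization Theorem is recovered in this situation.
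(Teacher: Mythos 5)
First, a point of reference: the paper does not prove this statement at all. It is Abe's Division Theorem, quoted verbatim from \cite[Thm.~1.1]{abe:divfree} and used as a black box to set up the notion of divisional freeness in Definition \ref{def:divfree}. So there is no in-paper proof to compare against; your proposal has to be measured against Abe's original argument. Its architecture --- induction on $\rk(\Ac)$, Yoshinaga's criterion via the Ziegler multirestriction $(\Ac^H,m^H)$, the Abe--Terao--Wakefield local--global formula for the second coefficient of $\chi$, and extremal bounds on exponents of rank-two multiarrangements --- is indeed the skeleton of Abe's published proof, and your rank-$3$ base case is essentially correct as sketched.

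As it stands, however, this is a roadmap rather than a proof: the steps you yourself flag as the crux are left unresolved, and they are exactly where all the work lies. (1) The descent of the division property to localizations: you assert that for each $X$ one can ``locate a hyperplane $H_X$'' in $\Ac_X$ with the analogous divisibility, but give no mechanism. The natural candidate for $X\subseteq H$ is $H_X=H$, and then one must prove both that $\chi((\Ac_X)^H,t)$ divides $\chi(\Ac_X,t)$ and that $(\Ac_X)^H=(\Ac^H)_X$ is free --- the latter is not automatic, since localizations of free arrangements need not be free, so the inductive hypothesis cannot simply be invoked. In Abe's argument this is precisely where the decomposition of $b_2(\Ac)-b_2(\Ac^H)-|\Ac^H|\bigl(|\Ac|-|\Ac^H|\bigr)$ into nonnegative local summands indexed by the rank-two flats of $\Ac^H$ carries the load: the divisibility forces this quantity to vanish, hence every local summand vanishes, and that is what makes the induction close. (2) Your ingredient (ii) cannot ``proceed exactly as in the base case'': for $\ell\geq 4$ the Ziegler multirestriction has rank $\ell-1\geq 3$ and is no longer automatically free (only rank-two multiarrangements are), so its freeness --- which Yoshinaga's criterion takes as a hypothesis --- must be established by a separate argument rather than read off. (3) The ``extremal bounds \dots\ to exclude the spurious splittings'' need to be made precise; the inequality actually required is of the form $\max(d_1,d_2)\leq |\Ac|-|\Ac^H|$, and without it the identification $\{d_1,d_2\}=\{|\Ac^H|-1,\,|\Ac|-|\Ac^H|\}$ does not follow from the equality of sums and the local--global formula alone. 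Until these three points are supplied with actual arguments, the proposal identifies the right tools but does not yet constitute a proof.
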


Theorem \ref{thm:abe-div} can be viewed as a strengthening of the addition
part of Terao's Theorem \cite{Terao1980_FreeI} (\cite[Thm.~4.51]{OrTer92_Arr}). An iterative  
application leads to the class of divisionally free arrangements:

\begin{definition}
	[{\cite[Def.~1.5]{abe:divfree}}]
	\label{def:divfree}
	An arrangement $\Ac$ is called 
	\emph{divisionally free} if either $\ell \le 2$, $\Ac = \varnothing_\ell$, or
	else there is a flag of subspaces $X_i$ of rank $i$ in $L(\Ac)$
	\[
	X_0 = V \supset X_1 \supset X_2 \supset \cdots \supset X_{\ell -2}
	\]
	so that $\chi(\Ac^{X_i},t)$ divides $\chi(\Ac^{X_{i-1}},t)$, for 
	$i = 1, \ldots, \ell-2$.
\end{definition}

It is clear from Theorem \ref{thm:abe-div}, Definition \ref{def:divfree}, and the fact that rank $2$ arrangements are free, that 
divisionally free arrangements are almost accurate. However, the converse is false, see Example \ref{ex:star-but-not-divfree}.
Nevertheless, since supersolvable arrangements are inductively free (\cite[Thm.~4.58]{OrTer92_Arr}) and the latter are divisionally free (\cite[Thm.~1.6]{abe:divfree}), there is an abundance of arrangements that are almost accurate.

\bigskip

It was already observed in \cite{CunMue19_MATfree}
that MAT-freeness and divisional freeness  are  
closely related. Thus, in view of 
Theorem \ref{Thm_MATRest}, it is natural to 
investigate the relation between divisional freeness and  accuracy.
However, Examples \ref{Ex_SSbnTF} and   \ref{ex:star-but-not-divfree} below do show that there are 
supersolvable (whence divisionally free) arrangements that are not accurate and vice versa.

\begin{example}
	\label{Ex_SSbnTF}
	Let $\Ac = \Ac(14,1)$ be the supersolvable simplicial arrangement of rank $3$ with $14$ hyperplanes,
	cf.~\cite{Grue09_SimplArr}, \cite{CunMue17_SuperSimpl}. 
	We have $\Ac = \{H_1,\ldots,H_7,K_1,\ldots,K_7 \}$, $X = \cap_{i=1}^7 H_i$ is modular
	of rank $2$, and $X \nsubseteq K_j$ for all $1\leq j \leq 7$.
	Consider $\Bc := \Ac \setminus \{K_1,K_2\}$. Since $\Bc$ is obtained from $\Ac$ by removing 
	hyperplanes away from the modular element $X$, $\Bc$ is still supersolvable
	and, thanks to \cite[Thm.~4.58]{OrTer92_Arr},	$\Bc$ is 
	 also free with exponents
	$\exp(\Bc) = (1,5,6)$. 
	Using the explicit description of $\Ac$ from  \cite{CunMue17_SuperSimpl}, 
	one easily checks that $\exp(\Bc^H)$ belongs to  $\{(1,3),(1,4),(1,6)\}$ for $H \in \Bc$.
	Consequently, $\Bc$ is not accurate.
\end{example}

\begin{example}
	\label{ex:star-but-not-divfree}
	In \cite{HogeRoehrle19_ConjAbeAF} a free arrangement $\Dc$ is constructed within a rank 5 restriction of the Weyl arrangement of type $E_7$ which is not divisionally free with exponents $(1,5,5,5,5)$ and defining polynomial
    \begin{align*}
        Q(\Dc) = \, &x_2(x_1+x_3-x_5)(2x_1+x_2+x_3)(2x_1+x_2+2x_3+x_4-x_5)\\
            &x_5(x_1+x_3)(x_2+x_5)(2x_1+x_2+2x_3+x_4)(2x_1+x_3-x_5)\\
            &(2x_1+2x_2+2x_3+x_4)(x_2+x_3+x_4)(x_1+x_2+x_3+x_4)\\
            &(x_3+x_4)(x_1+x_2+x_3)x_1(x_1+x_3+x_4)(2x_1+x_2+x_3-x_5)\\
            &(x_2+x_3+x_4+x_5)(x_1-x_5)(x_1-x_4-x_5)x_4.
    \end{align*}
	
	One can check that $\Dc $ is still accurate:
	Only the restriction to $H =\ker(x_4)$ has exponents $(1,5,5,5)$ and only the restrictions to
	\begin{align*}
        X_1 & = \ker(2x_1+x_2+x_3) \cap \ker(2x_1+x_2+2x_3+x_4-x_5), \\
        X_2 & = \ker(x_2+x_5) \cap \ker(x_2+x_3+x_4)
	\end{align*}
	are free with exponents $\exp(\Dc^{X_1}) = \exp(\Dc ^{X_2}) = (1,5,5)$. 
	However, neither of those flats is contained in $H = \ker(x_4)$.
	Note further that 
	$ Y_1 = X_1 \cap H$ and also $Y_2 = X_2 \cap H$ are rank 3 flats with 
	$\exp(\Dc^{Y_1}) = \exp(\Dc^{Y_2}) = (1,5)$. 
    In particular, the lack of a suitable rank $2$ flat lying between $Y_1$ (or $Y_2$)
	and $H$ prevents $\Dc$ from being divisionally free.
	
	It is also easily seen that the free but non-divisionally free rank $7$ arrangement
	$\Bc$ constructed in \cite{HogeRoehrle19_ConjAbeAF} as a certain subarrangement of the Weyl arrangement of type $E_7$ is also still accurate.	
	
	In \cite[\S 6]{CunMue19_MATfree}, Cuntz and M\"ucksch checked that both $\Dc$ and $\Bc$ fail to be MAT-free.
\end{example}

\begin{remark}
	\label{rem:AbeTeroTran}
	In the recent paper  \cite{AbeTerTran20_A_12Restr}, Abe, Terao and Tran
	derive the freeness of all restrictions $\Ac(W)^X$ of Weyl arrangements $\Ac(W)$ to intersections $X$ where $\Ac(W)_X/X \cong \Ac(A_1^2)$.
	\cite[Thm.~1.5]{AbeTerTran20_A_12Restr} asserts that the exponents of such a restriction $\Ac(W)^X$ are
	either given by removing the two highest exponents of $\Ac(W)$ or by removing the highest exponent and the middle exponent $h/2$,
	where $h$ is the Coxeter number of $W$.
	Consequently, together with Corollary \ref{Coro_HighestRootWeyl}, their result
	implies the divisional freeness and hence also the almost accuracy of Weyl arrangements of rank at most $4$. 
\end{remark}

\subsection{Graphic arrangements}
\label{ssect:graph}

In this section we examine accuracy among free graphic arrangements.
For basics on the latter, we refer to \cite[Sec.~2.4]{OrTer92_Arr}.

Our next example shows that also free graphic arrangements
which are always supersolvable and come from chordal graphs (cf.~\cite[Sec.~3]{ER94_FreeAB}) may fail to be accurate in general.

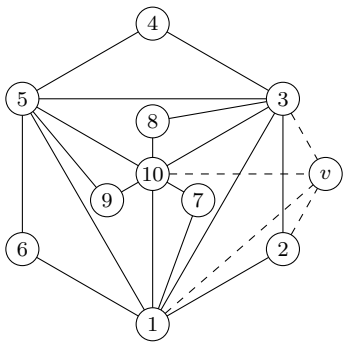
\begin{figure}[ht!b]
	\def\sc {1.0}
	\begin{tikzpicture}[scale=\sc]
	\node (10) at (0.0,0.0) {\tiny $10$};
	\draw[black] (10) circle[radius=6.25pt];
	\node (1) at (0.0,-2.) {\tiny $1$};
	\draw[black] (1) circle[radius=6.25pt];
	\node (2) at (1.7320508075688774,-1.) {\tiny $2$};
	\draw[black] (2) circle[radius=6.25pt];
	\node (3) at (1.7320508075688774,1.) {\tiny $3$};
	\draw[black] (3) circle[radius=6.25pt];
	\node (4) at (0.0,2.) {\tiny $4$};
	\draw[black] (4) circle[radius=6.25pt];
	\node (5) at (-1.7320508075688774,1.) {\tiny $5$};
	\draw[black] (5) circle[radius=6.25pt];
	\node (6) at (-1.7320508075688774,-1.) {\tiny $6$};
	\draw[black] (6) circle[radius=6.25pt];
	\node (7) at ($0.35*(2)-0.35*(10)$) {\tiny $7$};
	\draw[black] (7) circle[radius=6.25pt];
	\node (8) at ($0.35*(4)-0.35*(10)$) {\tiny $8$};
	\draw[black] (8) circle[radius=6.25pt];
	\node (9) at ($0.35*(6)-0.35*(10)$) {\tiny $9$};
	\draw[black] (9) circle[radius=6.25pt];
	
	\draw[shorten <=6.25pt, shorten >=6.25pt] ($(1)$) -- ($(2)$);
	\draw[shorten <=6.25pt, shorten >=6.25pt] ($(2)$) -- ($(3)$);
	\draw[shorten <=6.25pt, shorten >=6.25pt] ($(3)$) -- ($(4)$);
	\draw[shorten <=6.25pt, shorten >=6.25pt] ($(4)$) -- ($(5)$);
	\draw[shorten <=6.25pt, shorten >=6.25pt] ($(5)$) -- ($(6)$);
	\draw[shorten <=6.25pt, shorten >=6.25pt] ($(6)$) -- ($(1)$);
	\draw[shorten <=6.25pt, shorten >=6.25pt] ($(1)$) -- ($(10)$);
	\draw[shorten <=6.25pt, shorten >=6.25pt] ($(3)$) -- ($(10)$);
	\draw[shorten <=6.25pt, shorten >=6.25pt] ($(5)$) -- ($(10)$);
	\draw[shorten <=6.25pt, shorten >=6.25pt] ($(1)$) -- ($(3)$);
	\draw[shorten <=6.25pt, shorten >=6.25pt] ($(3)$) -- ($(5)$);
	\draw[shorten <=6.25pt, shorten >=6.25pt] ($(5)$) -- ($(1)$);
	
	\draw[shorten <=6.25pt, shorten >=6.25pt] ($(3)$) -- ($(8)$);
	\draw[shorten <=6.25pt, shorten >=6.25pt] ($(8)$) -- ($(10)$);
	\draw[shorten <=6.25pt, shorten >=6.25pt] ($(5)$) -- ($(9)$);
	\draw[shorten <=6.25pt, shorten >=6.25pt] ($(9)$) -- ($(10)$);
	\draw[shorten <=6.25pt, shorten >=6.25pt] ($(1)$) -- ($(7)$);
	\draw[shorten <=6.25pt, shorten >=6.25pt] ($(7)$) -- ($(10)$);
	
	\node (11) at (2.3,0.0) {\tiny $v$};
	\draw[black] (11) circle[radius=6.25pt];
	\draw[shorten <=6.25pt, shorten >=6.25pt, dashed] ($(1)$) -- ($(11)$);
	\draw[shorten <=6.25pt, shorten >=6.25pt, dashed] ($(2)$) -- ($(11)$);
	\draw[shorten <=6.25pt, shorten >=6.25pt, dashed] ($(3)$) -- ($(11)$);
	\draw[shorten <=6.25pt, shorten >=6.25pt, dashed] ($(10)$) -- ($(11)$);
	
	\end{tikzpicture}    
	\caption{A chordal graph $\Gg$ giving rise to a graphic arrangement which is free, but not accurate,
		and an extension $\Gg'$ by one vertex $v$ resulting in a graphic arrangement which is accurate.}
	\label{Fig_GraphNotAcc}
\end{figure}

\begin{example}
    \label{Ex_GraphNotAcc}
    Let $\Gg = (\Vg,\Eg)$ be the graph with vertex set $\Vg = \{1,\ldots, 10\}$ 
    and $18$ edges $\Eg$,  
    as shown in Figure \ref{Fig_GraphNotAcc}.
    It is easily seen that $(2,4,6,7,8,9,1,3,5,10)$ is a
    vertex elimination order for $\Gg$ (cf.~\cite[Sec.~3]{ER94_FreeAB}).
    Then by \cite[Thm.~3.3]{ER94_FreeAB}, the graphic arrangement $\Ac=\Ac(\Gg) = \{\ker(x_i-x_j) \mid (i,j) \in \Eg\}$
    is free with exponents  $\exp(\Ac) = (0,1,2,\ldots,$ $2,3) \in \ZZ^{10}$.
    From Figure \ref{Fig_GraphNotAcc} it is evident that up to symmetry there are only
    two different kinds of hyperplanes in $\Ac$.
    The hyperplane $H = \ker(x_1-x_2)$ is a representative of the first type and
    the hyperplane $K = \ker(x_1-x_3)$ is a representative of the second type.
    We easily see that 
    \begin{align*}
        \exp(\Ac^H) &= (0,1,2,\ldots,2,3) \in \ZZ^9, \\
        \exp(\Ac^K) &= (0,1,1,2,\ldots,2) \in \ZZ^9.
    \end{align*}
    Consequently, the graphic arrangement $\Ac$ is not accurate, in particular, it is not MAT-free, by Theorem \ref{Thm_MATRest}.
    Of course, $\Ac$ is still almost accurate since it is supersolvable.
    
    Further, from $\Gg$ we can construct a new graph, illustrating the behavior of accuracy with respect to
    restrictions and localizations, as follows. 
    Extending $\Gg$ by one additional vertex $v$ and the
    edges indicated by the dashed lines in Figure \ref{Fig_GraphNotAcc} yields a new chordal graph $\Gg'$.
    The corresponding graphic arrangement $\Bc := \Ac(\Gg')$ has exponents
    $\exp(\Bc) = (0,1,2,\ldots,2,3,3,3) \in \ZZ^{11}$.
    It is not hard to see, by contracting the appropriate edges, 
    that $\Bc$ is accurate. But for $H' = \ker(x_1-x_v) \in \Bc$ (the hyperplane corresponding to the new edge $(1,v)$ of $\Gg'$), 
    we have $\Bc^{H'} = \Ac$. Moreover,  for $$X = \bigcap_{1\leq i < j \leq 10} \ker(x_i-x_j)$$ we also obtain $\Bc_X \cong \Ac$.
    This shows that in general, accuracy is neither inherited by restrictions (even if they are supersolvable),
    nor by localizations.
    
    Lastly, one can show that every free graphic arrangement with fewer than $18$ edges is still accurate. Thus our chordal arrangement 
    $\Ac$ is the smallest one that fails to be accurate. 
\end{example}


\subsection{Complex reflection arrangements and their restrictions}
\label{ssect:refl}

Let $G \subseteq \GL(V)$ be a finite, 
complex reflection group acting on the complex vector space $V=\CC^\ell$.
The \emph{reflection arrangement} of $G$ in $V$ is the 
hyperplane arrangement $\Ac(G)$ 
consisting of the reflecting hyperplanes 
of the elements in $G$ acting as reflections on $V$.

Terao \cite{Terao1980_FreeUniRefArr} has shown that every 
reflection arrangement $\Ac(G)$ is free 
and that the exponents of $\Ac(G)$
coincide with the coexponents of $G$, 
cf.~\cite[Prop.~6.59 and Thm.~6.60]{OrTer92_Arr}. 

In view of Theorem \ref{thm:otsconj}, it is natural to examine accuracy 
for the larger class of complex reflection arrangements. 
Thanks to Remark \ref{rem_MATRest}(v) and \cite[Prop.~2.12]{roehrle:divfree}, 
accuracy and divisional freeness are compatible with products.

\begin{theorem}
	\label{thm:complex}
	Let $G$ be a complex reflection group with reflection arrangement $\Ac = \Ac(G)$. Then $\Ac$ is accurate 
    if and only if it is divisionally free. This is the case if and only if $G$ has no irreducible factor isomorphic 
    to one of the monomial groups $G(r,r,\ell)$, $r>2$, $\ell>2$, or   
	$ G_{24}, G_{27}, G_{29}, G_{33}, G_{34}$.
\end{theorem}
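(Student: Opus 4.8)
The plan is to prove Theorem~\ref{thm:complex} in three stages: first establish the equivalence between accuracy and divisional freeness for reflection arrangements, then handle the irreducible classification, and finally lift to products. Since Remark~\ref{rem_MATRest}(v) and \cite[Prop.~2.12]{roehrle:divfree} both tell us that the relevant properties are compatible with products, it suffices to settle the irreducible case; I would state this reduction first and then spend the bulk of the argument on irreducible $G$.

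For the irreducible case, one direction is essentially free: divisionally free arrangements are almost accurate by the discussion following Definition~\ref{def:divfree}, but for the stronger statement ``divisionally free $\Rightarrow$ accurate'' restricted to reflection arrangements, I would invoke the fact that every restriction $\Ac(G)^X$ of a reflection arrangement is again a reflection arrangement (Orlik--Solomon), whose exponents are known, together with Theorem~\ref{thm:abe-div}: walking down a divisional flag one sees that the characteristic polynomials factor compatibly, which pins down the exponents of the intermediate restrictions to be the appropriate initial segments of $\exp(\Ac(G))$. For the converse direction and the classification, the strategy is a case check guided by the known list of divisionally free reflection arrangements (from \cite{abe:divfree} and the subsequent literature on which complex reflection arrangements are inductively/divisionally free): for each group $G$ not on the excluded list one exhibits the divisional flag, and for each of $G(r,r,\ell)$ with $r>2,\ell>2$ and for $G_{24},G_{27},G_{29},G_{33},G_{34}$ one shows accuracy fails. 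The accuracy failure is shown by computing, for the relevant small rank (rank~$3$ typically suffices, since the obstruction to accuracy can already appear at the top of the flag), the exponents of all restrictions $\Ac(G)^X$ with $\dim X = \ell-1$ and observing that none has exponents equal to the first $\ell-1$ entries of $\exp(\Ac(G))$ in increasing order --- exactly the mechanism of Example~\ref{Ex_SSbnTF} and Example~\ref{Ex_GraphNotAcc}.

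The key intermediate observation, which I would isolate as a lemma, is that for a reflection arrangement accuracy forces, at dimension $\ell-1$, a free restriction whose exponents are $(e_1,\dots,e_{\ell-1})_{\leq}$; since all rank-one restrictions are themselves reflection arrangements with explicitly tabulated exponents (coexponents of the pointwise stabilizers, via Steinberg's theorem and the Orlik--Solomon tables), this is a finite check per group. For the excluded monomial family $G(r,r,\ell)$ the restrictions to hyperplanes are products of smaller $G(r,r,\cdot)$'s and Boolean arrangements, and a direct exponent computation shows the top exponent of $\Ac(G(r,r,\ell))$ (which is $(\ell-1)r-1$ or similar) is never omitted by any hyperplane restriction when $r>2$ --- that is the heart of the negative direction.

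The main obstacle I anticipate is \emph{not} the exceptional groups (those are a handful of explicit finite computations, feasible by computer algebra and already partly in the literature on divisional freeness) but rather making the infinite family $G(r,r,\ell)$ uniform: one must argue for all $r>2$ and all $\ell>2$ simultaneously that accuracy fails, which requires a clean description of $\exp(\Ac(G(r,r,\ell)))$ and of the exponents of every restriction to a flat of each rank, and then a monotonicity/counting argument showing the largest exponent can never be dropped. I would handle this by reducing to the $\ell=3$ case --- showing first that if $G(r,r,\ell)$ were accurate then so would be an appropriate rank-$3$ localization or restriction, which would be of type $G(r,r,3)$ or a product thereof --- and then doing the rank-$3$ computation explicitly in $r$. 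The compatibility of accuracy with neither localization nor restriction in general (Remark~\ref{rem_MATRest}(iii),(iv)) means this reduction step itself needs care and is where I expect the real work to lie; one likely uses instead the structure of the divisional/inductive chains directly rather than a naive restriction argument.
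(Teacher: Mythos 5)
Your overall strategy -- reduce to the irreducible case via product compatibility, then compare the tabulated exponents of $\Ac(G)$ and of its restrictions against the classification of divisionally free reflection arrangements -- is exactly the paper's proof, which is a one-line citation of \cite[Cor.~4.7]{abe:divfree} together with the exponent tables in \cite[\S 6.4, App.~C]{OrTer92_Arr}. However, two of your supporting claims are wrong and would need to be repaired before the argument goes through.

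First, it is false that every restriction $\Ac(G)^X$ of a reflection arrangement is again a reflection arrangement: the intermediate arrangements $\Ac^k_\ell(r)$ with $0<k<\ell$ arise as restrictions of $\Ac(G(r,r,\ell'))$ and are not reflection arrangements (this is stated explicitly in Section \ref{ssect:refl}). You appear to be conflating the restriction $\Ac^X$ with the localization $\Ac_X$; Steinberg's theorem concerns the latter. What is true, and what the proof actually needs, is that all restrictions are free with explicitly tabulated exponents. Second, your structural argument that ``walking down a divisional flag \ldots pins down the exponents of the intermediate restrictions to be the appropriate initial segments'' is invalid: divisibility of characteristic polynomials only forces $\exp(\Ac^{X_i})$ to be a sub-multiset of $\exp(\Ac^{X_{i-1}})$, i.e.\ almost accuracy, not accuracy. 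Indeed Example \ref{Ex_SSbnTF} exhibits a supersolvable (hence divisionally free) arrangement that is not accurate, so the implication ``divisionally free $\Rightarrow$ accurate'' has no structural proof even for these nice classes; for reflection arrangements it is a coincidence of the classification, and both directions of the equivalence must be verified by the table check. Finally, your worry about making the family $G(r,r,\ell)$ uniform, and the proposed reduction to a rank-$3$ localization or restriction, is both unnecessary and hazardous (accuracy is preserved by neither operation, per Remark \ref{rem_MATRest}): by Proposition \ref{prop:intermediate}, every hyperplane restriction of $\Ac(G(r,r,\ell))=\Ac^0_\ell(r)$ is of type $\Ac^1_{\ell-1}(r)$ with exponents given by a closed formula in $r$ and $\ell$, and a direct comparison with the first $\ell-1$ exponents of $\Ac^0_\ell(r)$ shows the mismatch uniformly for all $r>2$, $\ell>2$.
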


\begin{proof}
	The result follows readily from Definition \ref{Def_TF},
    the classification of the divisionally free reflection arrangements from \cite[Cor.~4.7]{abe:divfree}, and the exponents of the complex reflection arrangements and their restrictions, e.g.~see \cite[\S 6.4, App.~C]{OrTer92_Arr}. 
\end{proof}

In view of Theorem \ref{thm:complex} and the fact that all restrictions of complex reflection arrangements are free (thanks to \cite[\S 6.4, App.~D]{OrTer92_Arr}, 
\cite{OrTer92_CoxeterArrRestr}, and \cite{hogeroehrle:free}), 
it is natural to investigate accuracy 
among restrictions of complex reflection arrangements, 
not all of which are reflection arrangements again. 

In order to derive our results, we require some further notation.
Orlik and Solomon defined \emph{intermediate 
	arrangements} $\Ac^k_\ell(r)$ in 
\cite[\S 2]{OrSol83_CoxeterArr}
(cf.\ \cite[\S 6.4]{OrTer92_Arr}) which
interpolate between the
reflection arrangements of the monomial groups $G(r,r,\ell)$ and $G(r,1,\ell)$. 
They show up as restrictions of the reflection arrangement
of $G(r,r,\ell')$, for some $\ell'$,  
\cite[Prop.\ 2.14]{OrSol83_CoxeterArr} 
(cf.~\cite[Prop.\ 6.84]{OrTer92_Arr}).

For 
$\ell, r \geq 2$ and $0 \leq k \leq \ell$ the defining polynomial of
$\Ac^k_\ell(r)$ is given by
\[
Q(\Ac^k_\ell(r)) = x_1 \cdots x_k\prod\limits_{\substack{1 \leq i < j \leq \ell\\ 0 \leq n < r}}(x_i - \zeta^nx_j),
\]
where $\zeta$ is a primitive $r$-th root of unity,
so that 
$\Ac^\ell_\ell(r) = \Ac(G(r,1,\ell))$ and 
$\Ac^0_\ell(r) = \Ac(G(r,r,\ell))$. 
For $k \neq 0, \ell$, these are not reflection arrangements
themselves. 

Next we recall
\cite[Props.\ 2.11,  2.13]{OrSol83_CoxeterArr}
(cf.~\cite[Props.~6.82,  6.85]{OrTer92_Arr}).

\begin{proposition}
	\label{prop:intermediate}
	Let $\Ac = \Ac^k_\ell(r)$ for $\ell, r \geq 2$ and $0 \leq k \leq \ell$. Then
	\begin{enumerate}
		\item[(i)] $\Ac$ is free with $$\exp\Ac = (1, r + 1, \ldots, (\ell - 2)r + 1, (\ell - 1)r - \ell + k + 1),$$
		\item[(ii)] for $H \in \Ac$, the type of $\Ac^H$ is given in Table \ref{table2}.
	\end{enumerate}
\end{proposition}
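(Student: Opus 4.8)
The plan is to establish the two assertions separately: (i) by an induction on $k$ resting on Terao's Addition--Deletion Theorem \cite[Thm.~4.51]{OrTer92_Arr}, with the reflection arrangement $\Ac(G(r,r,\ell))$ providing the base case, and (ii) by a direct inspection of the restricted defining polynomials. Since the statement is originally due to Orlik and Solomon \cite{OrSol83_CoxeterArr}, one may of course simply quote their computation; the following sketch indicates how to recover it.

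For (i), the base case $k = 0$ is the reflection arrangement $\Ac^0_\ell(r) = \Ac(G(r,r,\ell))$, which is free by Terao's theorem \cite{Terao1980_FreeUniRefArr} (cf.~\cite[Prop.~6.59 and Thm.~6.60]{OrTer92_Arr}), its exponents being the coexponents of $G(r,r,\ell)$, namely $(1, r+1, \ldots, (\ell-2)r+1, (\ell-1)r-\ell+1)$ -- the asserted formula at $k = 0$. For the inductive step I would write $\Ac = \Ac^k_\ell(r) = \Ac' \dot{\cup} \{\ker(x_k)\}$ with $\Ac' = \Ac^{k-1}_\ell(r)$, and first compute the restriction $\Ac'' := \Ac^{\ker(x_k)}$. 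Setting $x_k = 0$ in $Q(\Ac)$, the braid factors $x_i - \zeta^n x_k$ with $i < k$ and $x_k - \zeta^n x_j$ with $j > k$ collapse to $x_i$ and $x_j$ respectively, so that $\Ac''$ acquires every coordinate hyperplane on the $\ell-1$ surviving coordinates while the remaining braid factors reproduce the complete braid pattern; hence $\Ac'' \cong \Ac^{\ell-1}_{\ell-1}(r) = \Ac(G(r,1,\ell-1))$, which is free with exponents $(1, r+1, \ldots, (\ell-2)r+1)$. By the induction hypothesis these are exactly the first $\ell-1$ exponents of $\Ac'$, and the remaining exponent of $\Ac'$ equals $(\ell-1)r-\ell+k$; moreover a direct count gives $|\Ac| - |\Ac''| = (\ell-1)r-\ell+k+1$, one more than that exponent. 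Thus the hypotheses of the Addition--Deletion Theorem \cite[Thm.~4.51]{OrTer92_Arr} are met for the triple $(\Ac, \Ac', \Ac'')$, and we conclude that $\Ac$ is free with $\exp\Ac = \exp(\Ac'') \cup \{(\ell-1)r-\ell+k+1\}$, which is the claimed formula.

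For (ii), I would exploit the symmetry group of $\Ac^k_\ell(r)$ generated by the permutations of coordinates preserving $\{1, \ldots, k\}$ together with the diagonal $r$-th roots of unity: it acts on the hyperplanes of $\Ac^k_\ell(r)$ with at most four orbits, namely the coordinate hyperplanes $\ker(x_i)$ with $i \le k$, and the braid hyperplanes $\ker(x_i - \zeta^n x_j)$ grouped according to whether $|\{i,j\} \cap \{1,\ldots,k\}|$ equals $0$, $1$, or $2$. For a coordinate hyperplane the restriction is computed exactly as in (i) and yields $\Ac^H \cong \Ac^{\ell-1}_{\ell-1}(r)$. For a braid hyperplane $H = \ker(x_i - \zeta^n x_j)$ the crucial observation is that, under the substitution $x_i = \zeta^n x_j$, each of the other $r-1$ parallel factors $x_i - \zeta^m x_j$ with $m \ne n$ degenerates to the coordinate form $x_j$, as does the coordinate factor $x_i$ precisely when $i \le k$, whereas every other braid factor remains a braid factor and the braid pattern on the $\ell-1$ surviving coordinates stays complete. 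Counting the resulting coordinate hyperplanes then gives $k+1$, $k$, or $k-1$ of them according as $|\{i,j\} \cap \{1,\ldots,k\}|$ is $0$, $1$, or $2$, so that $\Ac^H$ is isomorphic to $\Ac^{k+1}_{\ell-1}(r)$, $\Ac^{k}_{\ell-1}(r)$, or $\Ac^{k-1}_{\ell-1}(r)$ respectively; this is exactly the information in Table~\ref{table2}.

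The one genuinely delicate point I anticipate is the bookkeeping in part (ii): one must track precisely which braid factors -- and the coordinate factor $x_i$ -- collapse onto $\ker(x_j)$ under restriction, and verify that no spurious coincidences nor any extra hyperplanes arise among the restricted braid factors. Everything else reduces to the cited freeness of reflection arrangements together with the elementary check that the exponents and cardinalities line up so that Terao's Addition--Deletion Theorem applies.
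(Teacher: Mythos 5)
Your argument is correct, but note that the paper itself offers no proof of Proposition \ref{prop:intermediate}: it is recalled verbatim from Orlik--Solomon \cite[Props.~2.11, 2.13]{OrSol83_CoxeterArr} (cf.~\cite[Props.~6.82, 6.85]{OrTer92_Arr}), so the comparison is really with those sources. Your reconstruction follows the same two-step scheme as theirs: freeness and exponents via the addition part of Terao's Addition--Deletion Theorem applied to the triple $(\Ac^k_\ell(r), \Ac^{k-1}_\ell(r), \Ac^{\ell-1}_{\ell-1}(r))$, and the restriction table by tracking which factors of $Q$ collapse under the substitution $x_i=\zeta^n x_j$. The details check out: the exponent sum $r\binom{\ell}{2}+k$ matches $|\Ac^k_\ell(r)|$; the restriction to $\ker(x_k)$ does pick up all $\ell-1$ coordinate hyperplanes (each $x_s$, $s\neq k$, arises from the collapsed factors $x_s-\zeta^nx_k$), so it is $\Ac(G(r,1,\ell-1))$ with exponents $(1,r+1,\ldots,(\ell-2)r+1)$, which are exactly the first $\ell-1$ exponents of the deletion; and in part (ii) your count of coordinate hyperplanes in $\Ac^H$ is right because $\ker(x_j)$ always appears (from the $r-1\geq 1$ parallel factors) while the remaining coordinate factors $x_s$, $s\leq k$, $s\notin\{i,j\}$, survive unchanged and the braid pattern on the surviving $\ell-1$ coordinates stays complete without duplication. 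The only genuine difference from \cite[\S 6.4]{OrTer92_Arr} is the anchor of the induction: you start at $k=0$ by quoting Terao's freeness theorem for the reflection arrangement of $G(r,r,\ell)$ \cite{Terao1980_FreeUniRefArr}, which imports a nontrivial external input, whereas Orlik--Terao organize the induction over $\ell$ and $k$ so that the freeness of $\Ac(G(r,r,\ell))$ and $\Ac(G(r,1,\ell))$ comes out as a by-product rather than a hypothesis. Either route is legitimate; yours is shorter at the cost of a heavier citation.
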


\begin{table}[ht!b]
	\renewcommand{\arraystretch}{1.5}
	\begin{tabular}{llll}\hline
		$k$ & \multicolumn{2}{l}{$\alpha_H$} & Type of $\Ac^H$\\ \hline
		$0$ & arbitrary & & $\Ac^1_{\ell - 1}(r)$\\
		$1, \ldots, \ell - 1$ & $x_i - \zeta x_j$ & $1 \leq i < j \leq k < \ell$ & $\Ac^{k - 1}_{\ell - 1}(r)$\\
		$1, \ldots, \ell - 1$ & $x_i - \zeta x_j$ & $1 \leq i \leq k < j \leq \ell$ & $\Ac^k_{\ell - 1}(r)$\\
		$1, \ldots, \ell - 1$ & $x_i - \zeta x_j$ & $1 \leq k < i < j \leq \ell$ & $\Ac^{k + 1}_{\ell - 1}(r)$\\
		$1, \ldots, \ell - 1$ & $x_i$ & $1 \leq i \leq \ell$ & $\Ac^{\ell - 1}_{\ell - 1}(r)$\\
		$\ell$ & arbitrary & & $\Ac^{\ell - 1}_{\ell - 1}(r)$\\ \hline
	\end{tabular}
	\bigskip
	\caption{Restriction types of $\Ac^k_\ell(r)$}
	\label{table2}
\end{table}

\begin{lemma}
	\label{lem:akl}
	Let $\Ac = \Ac^k_\ell(r)$ for $\ell, r \geq 2$ and $1 \leq k \leq \ell-1$.
	Then 
	\begin{enumerate}
		\item[(i)] for $r =2$, $\Ac$ is accurate;
		\item[(ii)] for $r > 2$, $\Ac$ is accurate if and only if $r + k \ge \ell$.
	\end{enumerate}
\end{lemma}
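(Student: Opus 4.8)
The plan is to treat $\Ac = \Ac^k_\ell(r)$ through its hyperplane restrictions, which are completely listed in Table~\ref{table2}, combined with an induction on $\ell$ that handles (i) and (ii) simultaneously. The numerical backbone is Proposition~\ref{prop:intermediate}(i): writing $\exp(\Ac) = (1, r+1, \dots, (\ell-2)r+1, e_\ell)$ with $e_\ell = (\ell-1)r - \ell + k + 1$, one has $e_\ell - ((\ell-2)r+1) = r + k - \ell$, so $e_\ell$ is the largest exponent precisely when $r + k \ge \ell$ — which already explains why that inequality is the dividing line. For the base $\ell \le 2$ there is nothing to do, since rank-$2$ arrangements are free and have only rank-$\le 1$ restrictions, and there $r + k > \ell$; so in the inductive step I take $\ell \ge 3$.

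For the ``only if'' part of (ii) I would argue by contradiction. Assume $r > 2$ and $r + k < \ell$. Then $e_\ell < (\ell-2)r+1$ and the latter is the unique largest exponent, so the first $\ell-1$ exponents of $\Ac$ form the multiset $\{(m-1)r+1 : 1 \le m \le \ell-2\} \cup \{e_\ell\}$. On the other hand, Table~\ref{table2} shows that for every hyperplane $H \in \Ac$ the restriction $\Ac^H$ has exponent multiset $\{(m-1)r+1 : 1 \le m \le \ell-2\} \cup \{b\}$, where $b$ is one of the four explicit values attached to the types $\Ac^{k-1}_{\ell-1}(r), \Ac^{k}_{\ell-1}(r), \Ac^{k+1}_{\ell-1}(r), \Ac^{\ell-1}_{\ell-1}(r)$. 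Since two multisets $S \cup \{a\}$ and $S \cup \{b\}$ agree only if $a = b$, it then suffices to note that each of those four values equals $e_\ell$ only under an identity ($r = 0$, $r = 1$, $r = 2$, or $r + k = \ell$) ruled out by the hypotheses. Hence no dimension-$(\ell-1)$ flat realizes the first $\ell-1$ exponents, and $\Ac$ is not accurate.

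For all the ``if'' directions I would produce one hyperplane $H$ such that $\exp(\Ac^H)$ equals the first $\ell-1$ exponents of $\Ac$ and $\Ac^H$ is already known to be accurate, and then propagate accuracy downward using the elementary fact that $\Ac^Y = (\Ac^H)^Y$ for every $Y \in L(\Ac)$ with $Y \subseteq H$ (immediate from the definition of restriction, since $Y \subseteq H$ already forces $H \notin \Ac^Y$). When $r + k \ge \ell$ — this covers all of (ii) and the case $k \ge \ell - 2$ of (i) — I would take $H = \ker(x_1)$ (permissible since $k \ge 1$); by Table~\ref{table2}, $\Ac^H \cong \Ac^{\ell-1}_{\ell-1}(r) = \Ac(G(r,1,\ell-1))$, whose exponents are $(1, r+1, \dots, (\ell-2)r+1)$, exactly the first $\ell-1$ exponents of $\Ac$, and $\Ac(G(r,1,\ell-1))$ is accurate by Theorem~\ref{thm:complex} as $G(r,1,\ell-1)$ is not among the excluded groups. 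When $r = 2$ and $k \le \ell - 3$ (the remaining case of (i)), I would take $H = \ker(x_i + x_j)$ for some $k < i < j \le \ell$; then $\Ac^H \cong \Ac^{k+1}_{\ell-1}(2)$ by Table~\ref{table2}, its exponents $(1, 3, \dots, 2\ell-5, \ell+k-1)$ agree with the first $\ell-1$ exponents of $\Ac$, and $\Ac^{k+1}_{\ell-1}(2)$ is accurate by the induction hypothesis (note $1 \le k+1 \le (\ell-1)-1$). In both cases, for each $1 \le d \le \ell-1$ accuracy of $\Ac^H$ supplies a flat $Y \subseteq H$ of dimension $d$ with $(\Ac^H)^Y$ free and with the right first-$d$ exponents, and then $\Ac^Y = (\Ac^H)^Y$ settles dimension $d$ for $\Ac$; taking $Y = V$ for $d = \ell$ and $Y = H$ for $d = \ell-1$ completes the verification that $\Ac$ is accurate.

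The step I expect to be the main obstacle is not structural but the exponent bookkeeping: in each regime one must confirm that dropping the unique top exponent of $\Ac$ reproduces exactly the multiset $\exp(\Ac^H)$, and the ``anomalous'' exponent $e_\ell$ can collide with, or drop below, entries of the regular part (for instance with $r = 2$ and $k = \ell - 2$ one gets $e_\ell = 2\ell-3$ occurring with multiplicity two). Tracking these coincidences correctly is the delicate point; once that is pinned down, the induction and the restriction identity $\Ac^Y = (\Ac^H)^Y$ glue everything together.
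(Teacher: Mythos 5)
Your proposal is correct and follows essentially the same route as the paper's proof: induct on $\ell$ via Table \ref{table2}, take $H=\ker(x_i)$ when $r+k\ge\ell$ and a fourth-row hyperplane (type $\Ac^{k+1}_{\ell-1}(2)$) when $r=2$ and $k<\ell-2$, and for non-accuracy verify that no hyperplane restriction realizes the first $\ell-1$ exponents. You merely make explicit what the paper leaves implicit, namely the identity $\Ac^Y=(\Ac^H)^Y$ for propagating accuracy downward and the exponent comparison $b=e_\ell\iff k'=r+k-1$ ruling out all four restriction types.
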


\begin{proof}
	Thanks to Proposition \ref{prop:intermediate}(ii), the restriction of an intermediate arrangement is again of this kind. So we only need to consider the restriction $\Ac^H$ to a hyperplane. It follows from   Proposition \ref{prop:intermediate} that 
	firstly, if $r + k \ge \ell$, then for $H = \ker  x_i$, the exponents of $\Ac^H$ are all but the largest one of $\Ac$. Secondly, if $r > 2$ and $r + k < \ell$, then there is no restriction 
	$\Ac^H$ with this property. So $\Ac$ is not accurate in this instance.
	
	Finally, for $r = 2$ and $1 \le k < \ell -2$, if we take $H$ to be as in the fourth row of  Table \ref{table2}, then once again the exponents of $\Ac^H$ are all but the largest one of $\Ac$.
\end{proof}

Thanks to  
Lemma \ref{lem:akl}, the following is the smallest example of a non-accurate 
member among the intermediate arrangements $\Ac^k_\ell(r)$.

\begin{example}
	\label{ex:a15}
	By Proposition \ref{prop:intermediate}, 
	$\exp (\Ac^1_5(3)) =	(1,4,7,9,10)$, and the possible  exponents of restrictions are
	$(1,4,7,7)$, $(1,4,7,8)$, and $(1,4,7,10)$. 
	In particular, there is no hyperplane in $\Ac^1_5(3)$ whose 
	restriction has got exponents $(1,4,7,9)$. 
	Consequently, $\Ac^1_5(3)$ is not accurate.
\end{example}

Again thanks to Remark \ref{rem_MATRest}(v) and \cite[Prop.~2.12]{roehrle:divfree}, 
accuracy and divisional freeness are compatible with products. So we can reduce to the case of irreducible $G$ when considering restrictions of reflection arrangements.

\begin{theorem}
	\label{thm:restr}
	Let $G$ be an irreducible complex reflection group with reflection arrangement $\Ac(G)$. Let $\Ac = \Ac(G)^Y$, for $Y \in L(\Ac)\setminus \{V\}$. Then $\Ac$ is accurate
	if and only if one of the following holds:
	\begin{enumerate}[(i)]
		\item $G \ne G(r,r,\ell)$; 
		\item $G = G(r,r,\ell)$ and either $r = 2$ or else $\Ac = \Ac_\ell^k(r)$ with $r + k \ge \ell$ for $r > 2$.
	\end{enumerate}
\end{theorem}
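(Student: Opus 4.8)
Since $G$ is irreducible but its restrictions need not be, I will repeatedly use that a product is accurate if and only if every factor is (Remark~\ref{rem_MATRest}(v)). As $Y\neq V$, the arrangement $\Ac=\Ac(G)^Y$ is a \emph{proper} restriction of a reflection arrangement, so by \cite[\S 6.4, App.~D]{OrTer92_Arr}, \cite{OrTer92_CoxeterArrRestr} and \cite{hogeroehrle:free} it is free; thus the only point to establish is that for each $1\le d\le\rk\Ac$ there is a flat of dimension $d$ in $\Ac$ whose restriction is free with the first $d$ exponents of $\Ac$. I would split the proof according to whether or not $G$ is the monomial group $G(r,r,\ell)$, reducing everything to Proposition~\ref{prop:intermediate}, Lemma~\ref{lem:akl}, Theorem~\ref{thm:complex} and Corollary~\ref{cor:restr-Coxeter}.

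\textbf{The case $G=G(r,r,\ell)$.} Since $\Ac(G(r,r,\ell))=\Ac^0_\ell(r)$, it follows from \cite[Prop.~2.14]{OrSol83_CoxeterArr} (cf.~\cite[Prop.~6.84]{OrTer92_Arr}) that every restriction is an intermediate arrangement $\Ac\cong\Ac^k_m(r)$ with $m=\dim Y<\ell$. Reading Table~\ref{table2}, a one-step restriction of $\Ac^0_\ell(r)$ is forced to be $\Ac^1_{\ell-1}(r)$, and inspecting the remaining rows shows that the parameter $k$ can never drop below $1$ once it is at least $1$ (the row producing $\Ac^{k-1}_{m-1}(r)$ is vacuous for $k=1$). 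Hence every proper restriction is $\Ac^k_m(r)$ with $1\le k\le m$. For $k=m$ this is the reflection arrangement $\Ac(G(r,1,m))$, accurate by Theorem~\ref{thm:complex} and trivially satisfying $r+k=r+m\ge m$; for $1\le k\le m-1$, Lemma~\ref{lem:akl} gives accuracy exactly when $r=2$ or $r+k\ge m$. So $\Ac$ is accurate iff $r=2$ or $r+k\ge m$, which is the dichotomy of~(ii) (with the symbol $\ell$ in~(ii) standing for the rank $m$ of $\Ac$; the restrictions with $\dim Y\le1$ are accurate for trivial reasons). The ``only if'' of~(ii) is exactly Lemma~\ref{lem:akl}(ii).

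\textbf{The case $G\neq G(r,r,\ell)$.} If $G$ is monomial it is $G(r,p,\ell)$ with $p<r$, so $\Ac(G)=\Ac(G(r,1,\ell))=\Ac^\ell_\ell(r)$; by the $k=\ell$ rows of Table~\ref{table2} every restriction is $\Ac(G(r,1,m))$ for some $m<\ell$, each accurate by Theorem~\ref{thm:complex}. If $G$ is exceptional I would argue in three layers. (a) Any restriction of rank at most $2$ is essential (its centre is $\{0\}$) and every essential arrangement of rank $\le 2$ is free and accurate; this already disposes of every proper restriction of the rank-$3$ groups $G_{24},G_{25},G_{26},G_{27}$. (b) For the real Coxeter groups among the exceptionals --- $H_3,H_4,F_4,E_6,E_7,E_8$ --- all restrictions are accurate by Corollary~\ref{cor:restr-Coxeter}. (c) What remains is the finite list of restrictions of rank $\ge 3$ of $\Ac(G)$ for $G\in\{G_{29},G_{31},G_{32},G_{33},G_{34}\}$; these I would settle one by one using the classification of restriction types (which records the irreducible factors of each), Remark~\ref{rem_MATRest}(v), and the exponent tables of \cite[\S 6.4, App.~C, D]{OrTer92_Arr} and \cite{hogeroehrle:free}, exhibiting in each rank $d$ a flat realising the first $d$ exponents of the relevant restriction.

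\textbf{The main obstacle.} The hard part is layer~(c), above all the ``bad'' groups $G_{29},G_{33},G_{34}$: by Theorem~\ref{thm:complex} their reflection arrangements are themselves \emph{not} accurate, yet one must show that \emph{all} of their proper restrictions are. This forces a descent through the intersection lattice level by level, checking at each rank $d$ not merely that the required truncated exponent multiset occurs among the rank-$d$ restrictions but that a \emph{single} flat realises it, and also confirming freeness of these restrictions --- many of which are not reflection arrangements --- which is where \cite{hogeroehrle:free} is needed. This is a bounded but intricate piece of bookkeeping, and it is the only step of the argument that does not follow formally from the general results established earlier.
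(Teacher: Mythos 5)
Your overall route is the same as the paper's (which compresses the whole argument into one sentence: Definition~\ref{Def_TF}, Lemma~\ref{lem:akl}, and the exponent tables of \cite[\S 6.4, App.~C]{OrTer92_Arr}); your treatment of the $G(r,r,\ell)$ case via Table~\ref{table2} and Lemma~\ref{lem:akl}, and of $G(r,p,\ell)$ with $p<r$ via $\Ac(G(r,p,\ell))=\Ac^\ell_\ell(r)$, is correct and is exactly what the paper intends. However, there is one genuine logical flaw: in layer~(b) you invoke Corollary~\ref{cor:restr-Coxeter} to dispose of the real exceptional groups $H_3,H_4,F_4,E_6,E_7,E_8$. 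In the paper that corollary is stated and proved as a \emph{consequence} of Theorem~\ref{thm:restr} --- indeed the paper stresses that it does not follow from Theorem~\ref{Thm_MATRest} or Theorem~\ref{thm:otsconj}, since restrictions of Coxeter arrangements need not be MAT-free. There is no independent route to it in the paper beyond Lemma~\ref{lem:akl} (which only covers the type~$D$ restrictions $\Ac^k_m(2)$) and the very table-check you are in the middle of performing. So as written your argument is circular precisely for the exceptional Coxeter groups, which are among the cases where the content of the corollary is not otherwise established.

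The fix is routine but must be made explicit: fold $H_3,H_4,F_4,E_6,E_7,E_8$ into your layer~(c) and verify accuracy of their restrictions directly from the exponent data in \cite[App.~C]{OrTer92_Arr}, exactly as you propose to do for $G_{29},G_{31},G_{32},G_{33},G_{34}$. With that change the proposal matches the paper's proof; your remaining observations (restrictions of essential arrangements are essential, rank~$\le 2$ essential arrangements are accurate, the parameter $k$ of $\Ac^k_m(r)$ never returns to $0$ under restriction) are all correct and merely spell out details the paper leaves implicit.
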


\begin{proof}
	This follows readily from Definition \ref{Def_TF}, Lemma \ref{lem:akl}, and the exponents of the restrictions of the irreducible reflection arrangements, e.g.~see \cite[\S 6.4, App.~C]{OrTer92_Arr}.
\end{proof}

\begin{remark}
	It follows from Theorem \ref{thm:restr} that 
in contrast to the case of complex reflection arrangements from Theorem \ref{thm:complex}, there are 
divisionally free restrictions of the latter that are not accurate. For, thanks to 
\cite[Thm.~5.6]{abe:divfree}, all $\Ac^k_\ell(r)$ are divisionally free 
for $\ell, r \geq 3$ and $k \geq 1$.

Nevertheless, from the classification of the divisionally free restrictions of reflection arrangements from \cite[Thm.~3.3]{roehrle:divfree} and the exponents of the restrictions of the reflection arrangements, e.g.~see \cite[\S 6.4, App.~C]{OrTer92_Arr}, it follows from Theorem \ref{thm:restr} that if $G \ne G(r,r,\ell)$, then $\Ac(G)^Y$ is accurate
if and only if it is divisionally free.
\end{remark}

Note that $G(2,2,\ell)$ is the Coxeter group of type $D_\ell$ and thus the arrangements
$\Ac^k_\ell(2)$ are restrictions of Coxeter arrangements  of type $D$.
As a consequence of Lemma \ref{lem:akl}, all such are accurate.

The following consequence of Theorem \ref{thm:restr} shows that accuracy for Coxeter arrangements (Theorem \ref{thm:otsconj})
 extends to their restrictions.

\begin{corollary}
	\label{cor:restr-Coxeter}
	Restrictions of Coxeter arrangements are accurate. 
\end{corollary}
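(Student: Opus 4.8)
The plan is to reduce the statement to Theorem \ref{thm:restr}. Write $\Ac = \Ac(W)$ for the Coxeter arrangement of a Coxeter group $W$ and let $Y \in L(\Ac)$. If $Y = V$ there is nothing to prove, since $\Ac^V = \Ac$ is accurate by Theorem \ref{thm:otsconj}; so assume $Y \ne V$. Upon complexification $\Ac(W)$ becomes the reflection arrangement $\Ac(G)$ of the complex reflection group $G$ obtained from $W$, and since complexification identifies the intersection lattices and leaves freeness and exponents of every restriction unchanged, $\Ac(W)^Y$ is accurate if and only if the corresponding complex restriction $\Ac(G)^{Y_\CC}$ is. It therefore suffices to check accuracy of the restrictions $\Ac(G)^{Y'}$ with $Y' \ne V$.

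First I would dispose of the reducible case. If $W = W_1 \times \cdots \times W_m$ with each $W_i$ irreducible, then $\Ac(W) = \Ac(W_1) \times \cdots \times \Ac(W_m)$, every flat factors as $Y = Y_1 \times \cdots \times Y_m$ with $Y_i \in L(\Ac(W_i))$, and $\Ac(W)^Y = \Ac(W_1)^{Y_1} \times \cdots \times \Ac(W_m)^{Y_m}$. By Remark \ref{rem_MATRest}(v), the left-hand side is accurate as soon as each factor is, so we may assume $W$ irreducible.

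For irreducible $W$ I would split according to type. The only irreducible Coxeter groups whose complexification is a monomial group $G(r,r,\ell)$ with $r \ge 2$ are $W = D_\ell = G(2,2,\ell)$ and the rank-two dihedral groups $I_2(m) = G(m,m,2)$. If $W$ is of none of these types, then $G \ne G(r,r,\ell)$ and Theorem \ref{thm:restr}(i) immediately yields that $\Ac(G)^{Y'}$ is accurate. If $W = D_\ell$, then $r = 2$ and Theorem \ref{thm:restr}(ii) applies; equivalently, every proper restriction of $\Ac(D_\ell)$ is, up to a product with a braid arrangement, an intermediate arrangement $\Ac^k_{\ell'}(2)$, and these are accurate by Lemma \ref{lem:akl}(i). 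If $W = I_2(m)$ is dihedral, then $\rk \Ac(W) = 2$, so a proper restriction has rank at most one and is the empty arrangement in a space of dimension at most one, which is trivially free and accurate. This exhausts all cases.

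The proof is largely organizational once Theorem \ref{thm:restr} is available; the one step that needs care is the passage between the real and complex settings — namely that complexification preserves the lattice of flats together with freeness and the exponents of all restrictions — so that the complex classification of Theorem \ref{thm:restr} may legitimately be quoted for Coxeter arrangements.
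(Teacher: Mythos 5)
Your proof is correct and follows essentially the same route as the paper: the corollary is obtained as a direct consequence of Theorem \ref{thm:restr}, together with Lemma \ref{lem:akl} for the type $D$ intermediate arrangements $\Ac^k_\ell(2)$ and the product compatibility of accuracy from Remark \ref{rem_MATRest}(v). You merely make explicit the complexification step, the reduction to irreducible factors, and the case split over types (including the trivial dihedral case), all of which the paper leaves implicit.
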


Observe that accuracy  of Coxeter arrangements, Theorem \ref{thm:otsconj}, is a consequence of Theorem \ref{Thm_MATRest} and the fact that all such are MAT-free, \cite{CunMue19_MATfree}. In contrast, restrictions of Coxeter arrangements are not MAT-free in general, see \cite[Ex.~22]{CunMue19_MATfree}.
So Corollary \ref{cor:restr-Coxeter} does not follow from Theorem \ref{Thm_MATRest} and is independent from Theorem \ref{thm:otsconj}.

Finally, we present an example among the intermediate arrangements which 
shows that accuracy is not compatible with taking localizations.

\begin{example}
	\label{ex:local}
	Let $\Ac = \Ac^1_\ell(r)$ for $\ell \ge 4$ and $r \ge \ell -1$.
	Let 
	\[
	X = \bigcap\limits_{\substack{2 \leq i < j \leq \ell\\ 0 \leq n < r}} \ker(x_i - \zeta^nx_j),
	\]
	where $\zeta$ is a primitive $r$-th root of unity.
	Then $\Ac_X \cong \Ac^0_{\ell-1}(r)$.
	By Theorems \ref{thm:complex} and \ref{thm:restr},
	$\Ac$ is accurate but $\Ac_X  \cong \Ac(G(r,r,\ell-1))$ is not.
\end{example}


\newcommand{\etalchar}[1]{$^{#1}$}
\providecommand{\bysame}{\leavevmode\hbox to3em{\hrulefill}\thinspace}
\providecommand{\MR}{\relax\ifhmode\unskip\space\fi MR }
\providecommand{\MRhref}[2]{%
  \href{http://www.ams.org/mathscinet-getitem?mr=#1}{#2}
}
\providecommand{\href}[2]{#2}

\end{document}